\newtheorem{assumption}{Assumption}
\newtheorem{lemma}{Lemma}
\newtheorem{proposition}{Proposition}
\newtheorem{theorem}{Theorem}
\newtheorem{corollary}{Corollary}
\theoremstyle{remark}\newtheorem*{remark}{Remark}
\newcommand{\reals}{\mathbb{R}}
\newcommand{\minimize}{\mathop{\mathrm{minimize}{}}}
\newcommand{\maximize}{\mathop{\mathrm{maximize}{}}}
\newcommand{\argmin}{\mathop{\mathrm{arg\,min}{}}}
\newcommand{\dom}{\mathrm{dom\,}}
\newcommand{\eqdef}{\stackrel{\mathrm{def}}{=}}
\newcommand{\E}{\mathbf{E}}
\newcommand{\bE}{\mathbf{E}}
\newcommand{\tz}{\tilde{z}}
\newcommand{\ba}{\begin{array}}
\newcommand{\ea}{\end{array}}
\newcommand{\beq}{\begin{equation}}
\newcommand{\eeq}{\end{equation}}
\newcommand{\beqa}{\begin{eqnarray}}
\newcommand{\eeqa}{\end{eqnarray}}
\newcommand{\beqas}{\begin{eqnarray*}}
\newcommand{\eeqas}{\end{eqnarray*}}
\newcommand{\bi}{\begin{itemize}}
\newcommand{\ei}{\end{itemize}}
\newcommand{\nn}{\nonumber}
\def\eqref#1{(\ref{#1})}
\def\tf{{\tilde f}}
\def\tpsi{{\tilde \Psi}}
\def\tL{{\tilde L}}
\title{An Accelerated Proximal Coordinate Gradient Method
%for Minimizing Convex Composite Functions}
%and its Application to Regularized Loss Minimization}
and its Application to Regularized Empirical Risk Minimization}
\author{
Qihang Lin\thanks{
    Tippie College of Business,
    The University of Iowa,
    Iowa City, IA 52242, USA.
    Email: \texttt{qihang-lin@uiowa.edu}.
}
\and Zhaosong Lu\thanks{
    Department of Mathematics,
    Simon Fraser University,
    %Surrey, British Columbia, Canada.
    Surrey, BC V3T 0A3, Canada.
    Email: \texttt{zhaosong@sfu.ca}.
}
\and Lin Xiao\thanks{
    Machine Learning Groups,
    Microsoft Research,
    Redmond, WA 98052, USA.
    Email: \texttt{lin.xiao@microsoft.com}.
}
}
\begin{document}
\maketitle

\begin{abstract}
We consider the problem of minimizing the sum of two convex functions:
one is smooth and given by a gradient oracle, and the other is separable
over blocks of coordinates and has a simple known structure over each block.
We develop an accelerated randomized proximal coordinate gradient (APCG) 
method for minimizing such convex composite functions. 
For strongly convex functions, our method achieves faster linear convergence 
rates than existing randomized proximal coordinate gradient methods.
Without strong convexity, our method 
enjoys accelerated sublinear convergence rates.
We show how to apply the APCG method to solve the regularized 
empirical risk minimization (ERM) problem, and devise efficient implementations
that avoid full-dimensional vector operations.
For ill-conditioned ERM problems, our method obtains improved convergence rates
than the state-of-the-art stochastic dual coordinate ascent (SDCA) method.
\end{abstract}

\section{Introduction}

Coordinate descent methods have received extensive attention in recent years
due to its potential for solving large-scale optimization problems 
arising from machine learning and other applications
(e.g., \cite{Platt99,HCLKS08,WuLange08,LiOsher09,
WenGoldfardScheinberg12,QinScheinbergGoldfarb13}).
In this paper, we develop an accelerated proximal coordinate gradient (APCG)
method for solving problems of the following form:
\begin{equation}\label{eqn:composite-min}
\minimize_{x\in\reals^N} \quad \bigl\{ F(x) \eqdef f(x) + \Psi(x) \bigr\},
\end{equation}
where $f$ and $\Psi$
%$f:\reals^N\to(-\infty,\infty]$ and $\Psi:\reals^N\to(-\infty,\infty]$ 
are proper and lower semicontinuous convex functions
\cite[Section~7]{Rockafellar70book}.
Moreover, we assume that $f$ is differentiable on $\reals^N$,
%$\dom(\Psi)$, 
and $\Psi$ has a block separable structure, i.e., 
\begin{equation}\label{eqn:Psi}
\Psi(x) = \sum_{i=1}^n \Psi_i(x_i) ,
\end{equation}
where each $x_i$ denotes a sub-vector of~$x$ with cardinality~$N_i$,
and the collection $\{x_i :i=1,\ldots,n\}$ form a partition of the components 
of~$x$.
In addition to the capability of modeling nonsmooth terms 
such as $\Psi(x)=\lambda\|x\|_1$, this model also includes optimization 
problems with block separable constraints. More specifically, each 
block constraints $x_i\in C_i$, where $C_i$ is a closed convex set,
can be modeled by an indicator function defined as
$\Psi_i(x_i)=0$ if $x_i\in C_i$ and $\infty$ otherwise.

At each iteration, coordinate descent methods choose one block of coordinates 
$x_i$ 
to sufficiently reduce the objective value while keeping other blocks fixed. 
In order to exploit the known structure of each $\Psi_i$, a \emph{proximal} 
coordinate gradient step can be taken \cite{RichtarikTakac12}.
To be more specific, given the current iterate $x^{(k)}$, 
we pick a block~$i_k\in\{1,\ldots,n\}$ 
and solve a block-wise proximal subproblem in the form of
\begin{equation}\label{eqn:rpcg-prox}
    h^{(k)}_{i_k} = \argmin_{h\in\Re^{N_{i_k}}} \left\{ 
        \langle \nabla_{i_k} f(x^{(k)}), h\rangle
        + \frac{L_{i_k}}{2}\|h\|^2 + \Psi_{i_k}(x^{(k)}_{i_k} + h) \right\},
\end{equation}
and then set the next iterate as
\begin{equation}\label{eqn:rpcg-update}
    x^{(k+1)}_i =\left\{\begin{array}{ll}
        x^{(k)}_{i_k} + h^{(k)}_{i_k}, & \mbox{if}~i=i_k, \\[1ex]
        x^{(k)}_i, & \mbox{if}~i\neq i_k,
    \end{array} \right.
    \qquad i=1,\ldots,n.
\end{equation}
Here $\nabla_i f(x)$ denotes the \emph{partial gradient} of~$f$ with respect
to~$x_i$, and~$L_i$ is the Lipschitz constant of the partial gradient
(which will be defined precisely later).

One common approach for choosing such a block is the
\emph{cyclic} scheme. The global and local convergence properties of the 
cyclic coordinate descent method have been studied in, e.g.,
\cite{Tseng01,LuoTseng02,SahaTewari13,BeTe13,HongWangLuo13}.
Recently, \emph{randomized} strategies for choosing the block to update became
more popular 
\cite{SST:09,LeventhalLewis10,Nesterov12rcdm,RichtarikTakac12}.
In addition to its theoretical benefits 
(randomized schemes are in general easier to analyze than the cyclic scheme),
numerous experiments have demonstrated that randomized coordinate descent
methods are very powerful for solving large-scale machine learning problems
\cite{ChangHsiehLin08,HCLKS08,SST:09,SSZhang13SDCA}. 
Their efficiency can be further improved with
parallel and distributed implementations 
\cite{Shotgun11icml, RichtarikTakac12bigdata, RichtarikTakac13distributed,
NecoaraClipici13,LiuWrightRe14}.
Randomized block coordinate descent methods have also been proposed and 
analyzed for solving problems with coupled linear constraints 
\cite{TsengYun09linear,NecoaraPatrascu14}
and a class of structured nonconvex optimization problems 
(e.g., \cite{LuXiao13nm,PatrascuNecoara13}). 
Coordinate descent methods with more general schemes of choosing
the block to update have also been studied; 
see, e.g., \cite{BertsekasTsitsiklis89,TsengYun09,Wright12}.

Inspired by the success of accelerated full gradient methods
\cite{Nesterov04book,BeT:09,Tse:08,Nesterov13composite},
several recent work extended Nesterov's acceleration technique
to speed up randomized coordinate descent methods. 
In particular, Nesterov \cite{Nesterov12rcdm} developed 
an accelerated randomized coordinate gradient method for minimizing
unconstrained smooth functions, which corresponds to 
the case of $\Psi(x)\equiv 0$ in~\eqref{eqn:composite-min}.
Lu and Xiao \cite{LuXiao13analysis} gave a sharper convergence analysis 
of Nesterov's method using a randomized estimate sequence framework,
and Lee and Sidford \cite{LeeSidford13} developed extensions using
weighted random sampling schemes.
Accelerated coordinate gradient methods have also been used to speed up
the solution of linear systems \cite{LeeSidford13,LiuWright13}.
More recently, Fercoq and Richt\'{a}rik \cite{FercoqRichtarik13} 
proposed an APPROX (Accelerated, Parallel and PROXimal) coordinate descent 
method for solving the more general problem~\eqref{eqn:composite-min} 
and obtained accelerated sublinear convergence rate, 
but their method cannot exploit the strong convexity of the objective function
to obtain accelerated linear rates.

In this paper, we propose a general APCG method that achieves
accelerated linear convergence rates when the objective function is strongly
convex.
Without the strong convexity assumption, our method recovers a special case
of the APPROX method \cite{FercoqRichtarik13}.
Moreover, we show how to apply the APCG method to solve the 
regularized empirical risk minimization (ERM) problem, and devise efficient 
implementations that avoid full-dimensional vector operations.
For ill-conditioned ERM problems, our method obtains improved convergence rates
than the state-of-the-art stochastic dual coordinate ascent (SDCA) method
\cite{SSZhang13SDCA}.

\subsection{Outline of paper}
This paper is organized as follows. 
The rest of this section introduces some notations and 
state our main assumptions. 
In Section~\ref{sec:apcg}, we present the general APCG method
and our main theorem on its convergence rate. 
We also give two simplified versions of APCG depending on whether or not
the function~$f$ is strongly convex, and explain how to exploit 
strong convexity in~$\Psi$.
Section~\ref{sec:analysis} is devoted to the convergence analysis that
proves our main theorem.
In Section~\ref{sec:effi-impl}, we derive equivalent implementations
of the APCG method that can avoid full-dimensional vector operations.

In Section~\ref{sec:erm}, we apply the APCG method to solve the dual of
the regularized ERM problem and give the corresponding complexity results. 
We also explain how to recover primal solutions to guarantee the same rate
of convergence for the primal-dual gap. 
In addition, we present numerical experiments to demonstrate 
the performance of the APCG method.

%In Appendix~\ref{sec:est-seq-proof}, we give an alternative proof of
%the main convergence theorem using the framework of randomized 
%coordinate-wise estimate sequence developed in 
%\cite{LuXiao13analysis,LeeSidford13}.

\subsection{Notations and assumptions}

For any partition of $x\in\reals^N$ into $\{x_i\in\reals^{N_i}:i=1,\ldots,n\}$
with $\sum_{i=1}^n N_i = N$,
there is an $N\times N$ permutation matrix $U$
partitioned as $U=[U_1 \cdots U_n]$, where $U_i\in\reals^{N\times N_i}$, such that
\[
x = \sum_{i=1}^n U_i x_i, \quad
\mbox{and}\quad x_i = U_i^T x, \quad i=1,\ldots,n.
\]
For any $x\in\reals^N$, the \emph{partial gradient} of~$f$ with respect to~$x_i$
is defined as
\[
\nabla_i f(x) = U_i^T \nabla f(x), \quad i=1,\ldots,n.
\]
We associate each subspace $\reals^{N_i}$, for $i=1,\ldots,n$,
with the standard Euclidean norm, denoted $\|\cdot\|_2$.
We make the following assumptions which are standard in the literature
on coordinate descent methods 
(e.g., \cite{Nesterov12rcdm,RichtarikTakac12}).

\begin{assumption}\label{asmp:coord-smooth}
The gradient of function~$f$ is block-wise Lipschitz continuous
with constants $L_i$, i.e.,
\[
\|\nabla_i f(x+U_i h_i) - \nabla_i f(x) \|_2 \leq L_i\|h_i\|_2,
\quad \forall\, h_i\in\reals^{N_i}, \quad i=1,\ldots,n, \quad x\in\reals^N.
\]
\end{assumption}

An immediate consequence of Assumption~\ref{asmp:coord-smooth} is
(see, e.g., \cite[Lemma~1.2.3]{Nesterov04book})
\begin{equation} \label{eqn:lip-ineq}
f(x+U_i h_i) \leq f(x) + \langle \nabla_i f(x), h_i\rangle
+ \frac{L_i}{2}\|h_i\|_2^2,
\quad \forall\, h_i\in\reals^{N_i}, \quad i=1,\ldots,n, \quad x\in\reals^N.
\end{equation}
For convenience, we define the following weighted norm 
in the whole space $\reals^N$:
\begin{eqnarray} \label{eqn:L-norm}
\|x\|_L &=& \biggl(\sum_{i=1}^n L_i \|x_i\|_2^2\biggr)^{1/2},
\quad\forall\,x\in\reals^N.
\end{eqnarray}

\begin{assumption}\label{asmp:strong-convex}
There exists $\mu\geq0$ such that 
for all $y\in\reals^N$ and $x\in\dom(\Psi)$,
\[
f(y) \geq f(x) + \langle \nabla f(x), y-x\rangle + \frac{\mu}{2}
\|y-x\|_L^2.
\]
\end{assumption}
The \emph{convexity parameter} of~$f$ with respect to the norm $\|\cdot\|_L$ 
is the largest $\mu$ such that the above inequality holds. 
Every convex function satisfies Assumption~\ref{asmp:strong-convex} 
with $\mu=0$.
If $\mu>0$, then the function~$f$ is called \emph{strongly} convex.

\begin{remark}
Together with~\eqref{eqn:lip-ineq} and the definition of
$\|\cdot\|_L$ in~\eqref{eqn:L-norm}, 
Assumption~\ref{asmp:strong-convex} implies $\mu\leq 1$.
\end{remark}

\section{The APCG method} 
\label{sec:apcg}
In this section we describe the general APCG method, 
and its two simplified versions under different assumptions
(whether or not the objective function is strong convex).
We also present our main theorem on the convergence rates of the APCG method.

We first explain the notations used in our algorithms.
The algorithms proceed in iterations, with~$k$ being the iteration counter.
Lower case letters $x$, $y$, $z$ represent vectors in the full space $\reals^N$,
and $x^{(k)}$, $y^{(k)}$ and $z^{(k)}$ are their values at the $k$th iteration.
Each block coordinate is indicated with a subscript, for example, 
$x^{(k)}_i$ represent the value of the $i$th block of the vector $x^{(k)}$.
The Greek letters $\alpha$, $\beta$, $\gamma$ are scalars,
and $\alpha_k$, $\beta_k$ and $\gamma_k$ represent their values
at iteration~$k$.
For scalars, a superscript represents the power exponent;
for example, $n^2$, $\alpha_k^2$ denotes the squares of $n$ 
and $\alpha_k$ respectively.

\begin{algorithm}[t]
\caption{The APCG method}
\label{alg:apcg}
\textbf{input:} $x^{(0)}\in\dom(\Psi)$ and convexity parameter $\mu\geq 0$.\\[0.5ex]
\textbf{initialize:} set $z^{(0)}=x^{(0)}$
    and choose $0 < \gamma_0 \in [\mu, 1]$. \\[0.5ex]
\textbf{iterate:} repeat for $k=0,1,2,\ldots$ 
\begin{enumerate}  \itemsep 0pt 
\item Compute $\alpha_k\in(0, \frac1n]$ from the equation
\begin{equation} \label{alphak}
n^2\alpha_k^2 = \left(1-\alpha_k\right)\gamma_k + \alpha_k \mu,
\end{equation}
and set
\begin{equation} \label{gammak}
\gamma_{k+1} = (1-\alpha_k) \gamma_k + \alpha_k \mu, \qquad
\beta_k = \frac{\alpha_k\mu}{\gamma_{k+1}} .
\end{equation}
\item Compute $y^{(k)}$ as 
\begin{equation} \label{yk}
y^{(k)} ~=~  \frac{1}{\alpha_k \gamma_k+\gamma_{k+1}} \left(\alpha_k \gamma_k z^{(k)} + \gamma_{k+1} x^{(k)}\right).
\end{equation}
\item Choose $i_k\in\{1,\ldots,n\}$ uniformly at random and compute
\[
    z^{(k+1)} = \argmin_{x\in\reals^N} \Bigl\{ \frac{n\alpha_k}{2} 
    \bigl\|x - (1-\beta_k) z^{(k)} - \beta_k  y^{(k)} \bigr\|_L^2
    + \langle \nabla_{i_k} f(y^{(k)}), x_{i_k}\rangle
    + \Psi_{i_k}(x_{i_k}) \Bigr\}.
\]
\item Set
\begin{equation}\label{xk}
 x^{(k+1)} = y^{(k)} + n\alpha_k(z^{(k+1)}-z^{(k)}) + \frac{\mu}{n}(z^{(k)}-y^{(k)}) .
% x^{(k+1)} = y^{(k)} + n\alpha_k\left(z^{(k+1)}-(1-\beta_k) z^{(k)}-\beta_k y^{(k)}\right) .
\end{equation}
\end{enumerate}
\end{algorithm}

The general APCG method is given as Algorithm~\ref{alg:apcg}.
At each iteration~$k$, the APCG method picks a random coordinate
$i_k\in\{1,\ldots,n\}$ and generates $y^{(k)}$, $x^{(k+1)}$ and $z^{(k+1)}$. 
One can observe that $x^{(k+1)}$ and $z^{(k+1)}$ depend on the realization 
of the random variable
\[
\xi_k = \{i_0, i_1,\ldots, i_k\},
\]
while $y^{(k)}$ is independent of~$i_k$ and 
only depends on $\xi_{k-1}$. 

To better understand this method, we make the following observations.
For convenience, we define
\begin{equation}\label{eqn:full-z-update}
 \tz^{(k+1)} = \argmin_{x\in\reals^N} \Bigl\{ 
     \frac{n\alpha_k}{2} \bigl\|x - (1-\beta_k) z^{(k)} - \beta_k y^{(k)} \bigr\|_L^2
    + \langle \nabla f(y^{(k)}), x-y^{(k)}\rangle + \Psi(x) \Bigr\},
\end{equation}
which is a full-dimensional update version of Step~3.
One can observe that~$z^{(k+1)}$ is updated as
\begin{equation}\label{eqn:z-update}
    z^{(k+1)}_i = \left\{\begin{array}{ll}
        \tz^{(k+1)}_i & \mathrm{if}~i=i_k,\\ [5pt]
        (1-\beta_k) z^{(k)}_i + \beta_k y^{(k)}_i & \mathrm{if}~i\neq i_k.
    \end{array} \right.
\end{equation}
Notice that from \eqref{alphak}, \eqref{gammak}, \eqref{yk} and \eqref{xk} 
we have
\[
x^{(k+1)} = y^{(k)} +  n\alpha_k \left(z^{(k+1)} - (1-\beta_k) z^{(k)} - \beta_k y^{(k)}\right),
\]
which together with \eqref{eqn:z-update} yields
\begin{equation}\label{eqn:x-update}
    x^{(k+1)}_i = \left\{\begin{array}{ll}
        y^{(k)}_i + n \alpha_k \left( z^{(k+1)}_i - z^{(k)}_i\right) + \frac{\mu}{n} \left( z^{(k)}_i-y^{(k)}_i \right)  & \mathrm{if}~i=i_k, \\ [5pt]
        y^{(k)}_i & \mathrm{if}~i\neq i_k.
    \end{array} \right.
\end{equation}
That is, in Step~4, we only need to update the block coordinates 
$x^{(k+1)}_{i_k}$ as in~\eqref{eqn:x-update}
and set the rest to be $y^{(k)}_i$.

We now state an expected-value type of convergence rate for the APCG method. 

\begin{theorem}\label{thm:apcg-rate}
Suppose Assumptions~\ref{asmp:coord-smooth} and~\ref{asmp:strong-convex} hold.
Let $F^\star$ be the optimal value of problem~(\ref{eqn:composite-min}),
and $\{x^{(k)}\}$ be the sequence generated by the APCG method. Then, for any $k\geq 0$,
there holds:
\[
\bE_{\xi_{k-1}} [F(x^{(k)})] - F^\star ~\leq~ \min \left\{ \left(1-\frac{\sqrt{\mu}}{n}\right)^k,
~\left(\frac{2n}{2n+k\sqrt{\gamma_0}}\right)^2 \right\}
\left(F(x^{(0)})-F^\star+\frac{\gamma_0}{2}R^2_0\right),
\]
where
\beq \label{R0}
R_0 \eqdef \min\limits_{x^\star\in X^\star} \|x^{(0)}-x^\star\|_L,
\eeq
and $X^\star$ is the set of optimal solutions of problem~(\ref{eqn:composite-min}).
\end{theorem}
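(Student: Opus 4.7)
The plan is to establish a one-step contraction of a suitable Lyapunov quantity and then bound the resulting product of contraction factors in two different ways. Fix an optimal $x^\star$ attaining $R_0$ in \eqref{R0} and define
\[
V_k \eqdef F(x^{(k)}) - F^\star + \frac{\gamma_k}{2}\|z^{(k)}-x^\star\|_L^2.
\]
The central estimate to prove is the one-step inequality
\[
\bE_{i_k}\bigl[V_{k+1}\bigr] \leq (1-\alpha_k)\, V_k.
\]
Taking total expectation and unrolling then yields $\bE_{\xi_{k-1}}[V_k] \leq V_0 \prod_{j=0}^{k-1}(1-\alpha_j)$, and dropping the nonnegative distance term on the left gives the advertised bound, since $V_0 = F(x^{(0)})-F^\star+\frac{\gamma_0}{2}R_0^2$.

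To derive the one-step inequality I would first bound $\bE_{i_k}[F(x^{(k+1)})]$. By \eqref{eqn:x-update}, $x^{(k+1)}-y^{(k)}$ is supported only on block $i_k$, so \eqref{eqn:lip-ineq} furnishes a quadratic overestimate of $f(x^{(k+1)})$ in terms of $\nabla_{i_k} f(y^{(k)})$ and $L_{i_k}\|x^{(k+1)}_{i_k}-y^{(k)}_{i_k}\|_2^2$. Adding $\Psi(x^{(k+1)})$ via block separability and taking $\bE_{i_k}[\cdot]$ brings the full-dimensional auxiliary variable $\tz^{(k+1)}$ from \eqref{eqn:full-z-update} into play, because \eqref{eqn:z-update} gives the identity
\[
\bE_{i_k}[\varphi(z^{(k+1)})] = \tfrac{1}{n}\,\varphi(\tz^{(k+1)}) + \tfrac{n-1}{n}\,\varphi\bigl((1-\beta_k)z^{(k)}+\beta_k y^{(k)}\bigr)
\]
for every block-separable $\varphi$. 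This is precisely why the $n\alpha_k$ factor in Step~3 was chosen: after averaging it produces a full-dimensional proximal step centered at $u^{(k)} \eqdef (1-\beta_k)z^{(k)}+\beta_k y^{(k)}$.

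Next I would invoke strong convexity of the objective in \eqref{eqn:full-z-update} to get the three-point inequality, valid for every $x$,
\[
\langle\nabla f(y^{(k)}),\tz^{(k+1)}-x\rangle + \Psi(\tz^{(k+1)}) - \Psi(x) \leq \tfrac{n\alpha_k}{2}\bigl(\|x-u^{(k)}\|_L^2 - \|\tz^{(k+1)}-u^{(k)}\|_L^2 - \|\tz^{(k+1)}-x\|_L^2\bigr).
\]
Applying this at $x=x^\star$, rewriting $\langle\nabla f(y^{(k)}),x^\star-y^{(k)}\rangle$ as $f(x^\star)-f(y^{(k)})-\tfrac{\mu}{2}\|x^\star-y^{(k)}\|_L^2$ via Assumption~\ref{asmp:strong-convex}, and combining with the $F(x^{(k+1)})$ bound leaves quadratic expressions in $x^\star-u^{(k)}$ and $x^\star-y^{(k)}$ that must be repackaged as $\|z^{(k)}-x^\star\|_L^2$ and $\bE_{i_k}\|z^{(k+1)}-x^\star\|_L^2$. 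The parameter choices $\beta_k=\alpha_k\mu/\gamma_{k+1}$, $\gamma_{k+1}=(1-\alpha_k)\gamma_k+\alpha_k\mu$, the convex combination \eqref{yk} defining $y^{(k)}$, and the identity $n^2\alpha_k^2=\gamma_{k+1}$ are exactly what make this repackaging collapse to $(1-\alpha_k)V_k$. This algebraic collapse, in which every constant in \eqref{alphak}--\eqref{yk} is forced, is the main technical hurdle.

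It remains to bound $\lambda_k \eqdef \prod_{j=0}^{k-1}(1-\alpha_j)$ by each of the two claimed factors. For the linear rate, a short induction on \eqref{gammak} gives $\gamma_k\geq\mu$ for all $k$, so \eqref{alphak} yields $n^2\alpha_k^2=\gamma_{k+1}\geq\mu$ and hence $\alpha_k\geq\sqrt{\mu}/n$, giving $\lambda_k\leq(1-\sqrt{\mu}/n)^k$. For the sublinear rate, which remains meaningful when $\mu=0$, I would use $\gamma_{k+1}\geq(1-\alpha_k)\gamma_k$ to conclude $\gamma_k\geq\gamma_0\lambda_k$, then combine the elementary identity $\tfrac{1}{\sqrt{\lambda_{k+1}}}-\tfrac{1}{\sqrt{\lambda_k}}\geq\tfrac{\alpha_k}{2\sqrt{\lambda_{k+1}}}$ with $\alpha_k=\sqrt{\gamma_{k+1}}/n\geq\sqrt{\gamma_0\lambda_{k+1}}/n$ to obtain $\tfrac{1}{\sqrt{\lambda_{k+1}}}-\tfrac{1}{\sqrt{\lambda_k}}\geq\tfrac{\sqrt{\gamma_0}}{2n}$. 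Telescoping from $\lambda_0=1$ and inverting gives $\lambda_k\leq(2n/(2n+k\sqrt{\gamma_0}))^2$, completing the proof.
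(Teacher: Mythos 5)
Your overall architecture (a Lyapunov quantity contracted by $(1-\alpha_k)$, then the two bounds on $\lambda_k=\prod_{j<k}(1-\alpha_j)$) is the same as the paper's, and your treatment of the smooth part, the three-point inequality for $\tz^{(k+1)}$, and the bounds on $\lambda_k$ all match the paper's argument. But there is a genuine gap at the single hardest point: the nonsmooth term. You define $V_k$ using $F(x^{(k)})=f(x^{(k)})+\Psi(x^{(k)})$ and assert that ``adding $\Psi(x^{(k+1)})$ via block separability and taking $\E_{i_k}$'' brings $\tz^{(k+1)}$ into play through the identity $\E_{i_k}[\varphi(z^{(k+1)})]=\frac1n\varphi(\tz^{(k+1)})+\frac{n-1}{n}\varphi((1-\beta_k)z^{(k)}+\beta_k y^{(k)})$. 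That identity concerns $z^{(k+1)}$, not $x^{(k+1)}$: by \eqref{eqn:x-update}, $x^{(k+1)}$ agrees with $y^{(k)}$ on every block $j\neq i_k$, so $\E_{i_k}[\Psi(x^{(k+1)})]$ contains $\Psi_j(y^{(k)}_j)$ terms with total weight $\frac{n-1}{n}$. Bounding $\Psi(y^{(k)})$ through the convex combination \eqref{yk} introduces $\Psi(z^{(k)})$, which your $V_k$ does not control; and one cannot instead write $x^{(k+1)}$ as a convex combination of $z^{(k+1)}$ and $x^{(k)}$, because the induced coefficient on $z^{(k)}$ is negative whenever $n>1$ (e.g.\ for $\mu=0$ it is $\alpha_k(1-n)/(1-n\alpha_k)\le 0$). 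Consequently the claimed one-step inequality $\E_{i_k}[V_{k+1}]\le(1-\alpha_k)V_k$ is not justified by your sketch, and it is precisely the statement the direct approach fails to deliver.

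The paper resolves this with the machinery you are missing: Lemma~\ref{lem:combination} writes $x^{(k)}=\sum_{l=0}^k\theta^{(k)}_l z^{(l)}$ as an explicit convex combination of the $z^{(l)}$'s, and Lemma~\ref{lem:mono-combine} defines the surrogate $\hat\Psi_k=\sum_{l}\theta^{(k)}_l\Psi(z^{(l)})\ge\Psi(x^{(k)})$ and proves $\E_{i_k}[\hat\Psi_{k+1}]\le\alpha_k\Psi(\tz^{(k+1)})+(1-\alpha_k)\hat\Psi_k$. The quantity actually telescoped is $f(x^{(k)})+\hat\Psi_k-F^\star+\frac{\gamma_k}{2}\|x^\star-z^{(k)}\|_L^2$, and only at the very end is $F(x^{(k)})\le f(x^{(k)})+\hat\Psi_k$ invoked to obtain the stated bound. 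To repair your proof you would either need to import this construction (or an equivalent estimate-sequence device) or prove your stronger contraction for $V_k$ directly, which the outlined block-separability argument does not do.
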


For $n=1$, our results in Theorem~\ref{thm:apcg-rate} match exactly the 
convergence rates of 
the accelerated full gradient method in~\cite[Section~2.2]{Nesterov04book}.
For $n>1$, our results improve upon the convergence rates 
of the randomized proximal coordinate gradient method described
in~\eqref{eqn:rpcg-prox} and~\eqref{eqn:rpcg-update}.
More specifically, if the block index $i_k\in\{1,\ldots,n\}$ is chosen 
uniformly at random, then the analysis in 
\cite{RichtarikTakac12,LuXiao13analysis} states that
the convergence rate of~\eqref{eqn:rpcg-prox} and~\eqref{eqn:rpcg-update}
is on the order of 
\[
    O\left(\min \left\{ \left(1-\frac{\mu}{n}\right)^k, ~\frac{n}{n+k} \right\}
\right) .
\]
Thus we obtain both accelerated linear rate for strongly convex functions 
($\mu>0$) and accelerated sublinear rate for non-strongly convex functions
($\mu=0$).
To the best of our knowledge, this is the first time that such an accelerated
linear convergence rate is obtained for solving the general class of 
problems~\eqref{eqn:composite-min}
using coordinate descent type of methods.

The proof of Theorem~\ref{thm:apcg-rate} is given in Section~\ref{sec:analysis}.
Next we give two simplified versions of the APCG method, 
for the special cases of $\mu>0$ and $\mu=0$, respectively.

\subsection{Two special cases}

For the strongly convex case with $\mu>0$, we can initialize 
Algorithm~\ref{alg:apcg} with the parameter $\gamma_0=\mu$, 
which implies $\gamma_k=\mu$ and $\alpha_k=\beta_k=\sqrt{\mu}/n$
for all $k\geq 0$.
This results in Algorithm~\ref{alg:apcg-mu}.
As a direct corollary of Theorem~\ref{thm:apcg-rate}, 
Algorithm~\ref{alg:apcg-mu} enjoys an accelerated linear convergence rate:
\[
    \bE_{\xi_{k-1}} [F(x^{(k)})] - F^\star 
~\leq~ \left(1-\frac{\sqrt{\mu}}{n}\right)^k
\left(F(x^{(0)})-F^\star+\frac{\mu}{2}\|x^{(0)}-x^\star\|_L^2\right),
\]
where $x^\star$ is the unique solution of~\eqref{eqn:composite-min}
under the strong convexity assumption.

\begin{algorithm}[t]
\caption{APCG with $\gamma_0=\mu>0$}
\label{alg:apcg-mu}
\textbf{input:} $x^{(0)}\in\dom(\Psi)$ and convexity parameter $\mu>0$.\\[0.5ex]
\textbf{initialize:} set $z^{(0)}=x^{(0)}$ 
    and $\alpha=\frac{\sqrt{\mu}}{n}$. \\[0.5ex]
\textbf{iterate:} repeat for $k=0,1,2,\ldots$ 
and repeat for $k=0,1,2,\ldots$
\begin{enumerate} \itemsep 0pt
\item Compute $y^{(k)} = \frac{x^{(k)} + \alpha z^{(k)}}{1+\alpha}$.
\item Choose $i_k\in\{1,\ldots,n\}$ uniformly at random and compute\\[1ex]
\[
    z^{(k+1)} = \argmin_{x\in\reals^N} \Bigl\{ 
    \frac{n\alpha}{2} \bigl\|x - (1-\alpha)z^{(k)} - \alpha y^{(k)} \bigr\|_L^2 
    + \langle \nabla_{i_k} f(y^{(k)}), x_{i_k}-y^{(k)}_{i_k}\rangle 
    + \Psi_{i_k}(x_{i_k}) \Bigr\}.
\]
\item Set $x^{(k+1)} = y^{(k)}+n\alpha(z^{(k+1)}-z^{(k)})+n\alpha^2(z^{(k)}-y^{(k)})$.
\end{enumerate}
\end{algorithm}

\begin{algorithm}[t]
\caption{APCG with $\mu=0$}
\label{alg:apcg-mu=0}
\textbf{Input:} $x^{(0)}\in\dom(\Psi)$. \\[0.5ex]
\textbf{Initialize:} set $z^{(0)}=x^{(0)}$
    and choose $\alpha_{-1} \in (0, \frac{1}{n}]$. \\[0.5ex]
\textbf{Iterate:} repeat for $k=0,1,2,\ldots$ 
\begin{enumerate}  \itemsep 0pt 
\item Compute 
$
\alpha_k = \frac{1}{2}\left(\sqrt{\alpha_{k-1}^4+4\alpha_{k-1}^2}-\alpha_{k-1}^2\right) .
$
\item Compute 
$
y^{(k)} ~=~ (1-\alpha_k) x^{(k)} + \alpha_k z^{(k)} 
$.
\item Choose $i_k\in\{1,\ldots,n\}$ uniformly at random and compute\\
\[
    z^{(k+1)}_{i_k} = \argmin_{x\in\reals^N}\Bigl\{ \frac{n\alpha_k L_{i_k}}{2} 
    \bigl\|x - z^{(k)}_{i_k} \bigr\|_2^2
    + \langle \nabla_{i_k} f(y^{(k)}), x-y^{(k)}_{i_k}\rangle
    + \Psi_{i_k}(x) \Bigr\}.
\]
and set $z^{(k+1)}_i = z^{(k)}_i$ for all  $i\neq i_k$.
\item Set
$
    x^{(k+1)} = y^{(k)} + n\alpha_k(z^{(k+1)}-z^{(k)}) .
$
\end{enumerate}
\end{algorithm}

Algorithm~\ref{alg:apcg-mu=0} shows the simplified version for $\mu=0$,
which can be applied to problems without strong convexity, or if the
convexity parameter $\mu$ is unknown.
According to Theorem~\ref{thm:apcg-rate}, Algorithm~\ref{alg:apcg-mu=0} has an
accelerated sublinear convergence rate, that is
\[
\bE_{\xi_{k-1}} [F(x^{(k)})] - F^\star ~\leq~ 
\left(\frac{2n}{2n+k n\alpha_0}\right)^2
\left(F(x^{(0)})-F^\star+\frac{n\alpha_0}{2}R^2_0\right).
\]
With the choice of $\alpha_{-1} = 1/\sqrt{n^2-1}$, which implies
$\alpha_0=1/n$,
Algorithm~\ref{alg:apcg-mu=0} reduces to the APPROX method 
\cite{FercoqRichtarik13} with single block update at each iteration 
(i.e., $\tau=1$ in their Algorithm~1).

\subsection{Exploiting strong convexity in $\Psi$} 
\label{sec:strong-psi}

In this section we consider problem \eqref{eqn:composite-min} 
with strongly convex $\Psi$.  
%Assume that $\|\cdot\|^2$ is differentiable, and 
We assume that $f$ and $\Psi$ have convexity parameters $\mu_f \ge 0$ 
and $\mu_{\Psi} >0$, both with respect to the standard Euclidean norm,
denoted $\|\cdot\|_2$.
%$\|\cdot\|_L$ defined in \eqref{eqn:L-norm}.

Let $x^{(0)} \in \dom(\Psi)$ and $s^{(0)}\in\partial \Psi(x^{(0)})$ 
be arbitrarily chosen, and define two functions
\beqas
\tf(x) &\eqdef& f(x) + \Psi(x^{(0)}) + \langle s^{(0)}, x-x^{(0)}\rangle +\frac{\mu_\Psi}{2} \|x-x^{(0)}\|_2^2 \\
\tpsi(x) &\eqdef& \Psi(x) - \Psi(x^{(0)}) - \langle s^{(0)}, x-x^{(0)}\rangle - \frac{\mu_{\Psi}}{2} \|x-x^{(0)}\|_2^2.
\eeqas
One can observe that the gradient of the function~$\tf$ is block-wise 
Lipschitz continuous with constants $\tL_i = L_i+\mu_{\Psi}$
with respect to the norm $\|\cdot\|_2$.
The convexity parameter of~$\tf$ with respect to the norm
$\|\cdot\|_{\tL}$ defined in~\eqref{eqn:L-norm} is
\beq \label{mu}
\mu: = \frac{\mu_f+\mu_\Psi}{\max\limits_{1\le i\le n} \{L_i+\mu_{\Psi}\}}.
\eeq
In addition, $\tpsi$ is a block separable convex function which 
can be expressed as $\tpsi(x) = \sum^n_{i=1} \tpsi_i(x_i)$, where
\[
\tpsi_i(x_i) ~=~ \Psi_i(x_i) - \Psi_i(x^0_i) - \langle s^0_i, x_i-x^0_i\rangle - \frac{\mu_{\Psi}}{2} \|x_i-x^0_i\|_2^2, \qquad i=1,\ldots,n.
\]
As a result of the above definitions, 
we see that problem \eqref{eqn:composite-min} is equivalent to 
\beq \label{ref-prob}
\minimize_{x\in\Re^N} \ \left\{\tf(x) + \tpsi(x) \right\},
\eeq
which can be suitably solved by the APCG method proposed in Section \ref{sec:apcg} with $f$, $\Psi_i$  and $L_i$ replaced by $\tf$, $\tpsi_i$ and $L_i+\mu_{\Psi}$, respectively.
The rate of convergence of APCG applied to problem \eqref{ref-prob} 
directly follows from Theorem \ref{thm:apcg-rate}, with~$\mu$ given 
in~\eqref{mu} and the norm $\|\cdot\|_L$ in~\eqref{R0} replaced 
by $\|\cdot\|_{\tL}$.

\section{Convergence analysis}
\label{sec:analysis}

In this section, we prove Theorem~\ref{thm:apcg-rate}.
First we establish some useful properties of the sequences
$\{\alpha_k\}^\infty_{k=0}$ and $\{\gamma_k\}^\infty_{k=0}$ 
generated in Algorithm~\ref{alg:apcg}.
Then in Section~\ref{sec:Phi-hat}, we construct a sequence 
$\{\hat\Psi_k\}_{k=1}^\infty$ to bound the values of $\Psi(x^{(k)})$
and prove a useful property of the sequence.
Finally we finish the proof of Theorem~\ref{thm:apcg-rate}
in Section~\ref{sec:prove-thm1}.

\begin{lemma} \label{alphak-prop}
Suppose $\gamma_0>0$ and $\gamma_0 \in [\mu,1]$ and 
$\{\alpha_k\}^\infty_{k=0}$ and $\{\gamma_k\}^\infty_{k=0}$ are generated 
in Algorithm~\ref{alg:apcg}. 
Then there hold:
\begin{itemize}
\item[(i)] $\{\alpha_k\}^\infty_{k=0}$ and $\{\gamma_k\}^\infty_{k=0}$ 
    are well-defined positive sequences. 
\item[(ii)] $\sqrt{\mu}/n \le \alpha_k \le 1/n$ and $\mu \le \gamma_k \le 1$ for all $k \ge 0$.
\item[(iii)]  $\{\alpha_k\}^\infty_{k=0}$ and $\{\gamma_k\}^\infty_{k=0}$ are non-increasing. 
\item[(iv)] $\gamma_k = n^2 \alpha^2_{k-1}$ for all $k \ge 1$.
\item[(v)]  With the definition of
\begin{equation}\label{eqn:lambda-def}
\lambda_k= \prod^{k-1}_{i=0} (1-\alpha_i),
\end{equation}
we have for all $k\geq 0$,
\[
\lambda_k ~\leq~ \min \left\{ \left(1-\frac{\sqrt{\mu}}{n}\right)^k,
~\left(\frac{2n}{2n+k\sqrt{\gamma_0}}\right)^2 \right\} .
\]
\end{itemize}
\end{lemma}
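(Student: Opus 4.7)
The five assertions are tightly coupled, so I would prove (i)–(iv) together by a single induction on $k$, and then derive (v) from (ii) and (iv). The base case uses the hypothesis $\gamma_0\in[\mu,1]$ and the fact (from the Remark) that $\mu\le 1$. For the inductive step at index $k$, assume $\gamma_k\in[\mu,1]$ and $\gamma_k>0$. View \eqref{alphak} as the quadratic
\[
q_k(\alpha)\;\eqdef\;n^2\alpha^2+(\gamma_k-\mu)\alpha-\gamma_k\;=\;0.
\]
Its discriminant is $(\gamma_k-\mu)^2+4n^2\gamma_k>0$ and the product of its roots is $-\gamma_k/n^2<0$, so $q_k$ has a unique positive root, which we call $\alpha_k$; this handles (i). Then $\gamma_{k+1}=(1-\alpha_k)\gamma_k+\alpha_k\mu$ is a convex combination of two numbers in $[\mu,1]$ (using $\alpha_k\in(0,1]$, to be verified next), so $\gamma_{k+1}\in[\mu,1]$ and (iv) is immediate from \eqref{alphak}: $n^2\alpha_k^2=\gamma_{k+1}$.

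\textbf{Bounds on $\alpha_k$ (part (ii)).} Since $q_k$ is an upward parabola with a unique positive root, it suffices to sign $q_k$ at $\sqrt{\mu}/n$ and at $1/n$. A direct computation gives
\[
q_k\!\left(\tfrac{\sqrt{\mu}}{n}\right)\;=\;(\mu-\gamma_k)\!\left(1-\tfrac{\sqrt{\mu}}{n}\right)\;\le\;0,
\qquad
q_k\!\left(\tfrac{1}{n}\right)\;=\;(1-\gamma_k)\!\left(1-\tfrac{1}{n}\right)+\tfrac{\gamma_k-\mu}{n}\cdot 1 \;\ge\;0,
\]
using $\mu\le\gamma_k\le 1$ and $\mu\le 1$ (hence $\sqrt{\mu}/n\le 1$). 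This yields $\sqrt{\mu}/n\le\alpha_k\le 1/n$, and the corresponding bounds on $\gamma_{k+1}$ follow from the convex-combination formula.

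\textbf{Monotonicity (part (iii)).} That $\gamma_{k+1}\le\gamma_k$ is immediate because $\gamma_{k+1}$ is a convex combination of $\gamma_k$ and $\mu\le\gamma_k$. For $\alpha_{k+1}\le\alpha_k$ I would argue that for any fixed $\alpha\in(0,1)$ the map $\gamma\mapsto q_\gamma(\alpha)=n^2\alpha^2+(\gamma-\mu)\alpha-\gamma$ is decreasing in $\gamma$ (coefficient $\alpha-1<0$), so $q_{\gamma_{k+1}}(\alpha_k)\ge q_{\gamma_k}(\alpha_k)=0$, and hence the positive root $\alpha_{k+1}$ of $q_{\gamma_{k+1}}$ satisfies $\alpha_{k+1}\le\alpha_k$.

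\textbf{Part (v).} The first bound $\lambda_k\le(1-\sqrt{\mu}/n)^k$ is immediate from (ii), since each factor $1-\alpha_i\le 1-\sqrt{\mu}/n$. The second bound is the main obstacle, but it follows the now-standard Nesterov telescoping trick on $1/\sqrt{\lambda_k}$. First I would iterate $\gamma_{k+1}=(1-\alpha_k)\gamma_k+\alpha_k\mu\ge(1-\alpha_k)\gamma_k$ to get $\gamma_{k+1}\ge\lambda_{k+1}\gamma_0$, and combine with (iv) to obtain $n\alpha_k=\sqrt{\gamma_{k+1}}\ge\sqrt{\lambda_{k+1}\gamma_0}$. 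Then, using $\lambda_{k+1}=(1-\alpha_k)\lambda_k$,
\[
\frac{1}{\sqrt{\lambda_{k+1}}}-\frac{1}{\sqrt{\lambda_k}}
\;=\;\frac{\lambda_k-\lambda_{k+1}}{\sqrt{\lambda_k\lambda_{k+1}}\bigl(\sqrt{\lambda_k}+\sqrt{\lambda_{k+1}}\bigr)}
\;\ge\;\frac{\alpha_k\lambda_k}{2\lambda_k\sqrt{\lambda_{k+1}}}
\;=\;\frac{\alpha_k}{2\sqrt{\lambda_{k+1}}}
\;\ge\;\frac{\sqrt{\gamma_0}}{2n}.
\]
Telescoping from $0$ to $k-1$ and using $\lambda_0=1$ gives $1/\sqrt{\lambda_k}\ge 1+k\sqrt{\gamma_0}/(2n)$, which rearranges to the claimed bound. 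The only slightly delicate point is the first inequality in the display above, where I bound $\sqrt{\lambda_k}+\sqrt{\lambda_{k+1}}$ from above by $2\sqrt{\lambda_k}$ (monotonicity of $\lambda_k$), so I should remark that $\lambda_k$ is non-increasing, which is clear since each factor lies in $(0,1)$.
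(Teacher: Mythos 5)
Your proposal is correct and takes essentially the same route as the paper: an induction that extracts existence and the bounds $\sqrt{\mu}/n\le\alpha_k\le 1/n$ from sign analysis of the quadratic $n^2\alpha^2+(\gamma_k-\mu)\alpha-\gamma_k$, easy monotonicity arguments, and then the standard telescoping estimate on $1/\sqrt{\lambda_k}$ for part (v), which is exactly the argument the paper delegates to Nesterov's book and Lu--Xiao. One harmless slip: your displayed identity for $q_k(1/n)$ is not exact (the correct value is $1-\gamma_k+\frac{\gamma_k-\mu}{n}$), but that quantity is still nonnegative under $\mu\le\gamma_k\le 1$, so the conclusion $\alpha_k\le 1/n$ is unaffected.
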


\begin{proof}
Due to \eqref{alphak} and \eqref{gammak}, statement (iv) always holds 
provided that $\{\alpha_k\}^\infty_{k=0}$ and $\{\gamma_k\}^\infty_{k=0}$ 
are well-defined. 
We now prove statements (i) and (ii) by induction. 
For convenience, Let 
\[
g_\gamma(t) = n^2 t^2-\gamma(1-t)-\mu t.
\]
Since $\mu \le 1$ and $\gamma_0 \in (0,1]$, one can observe that $g_{\gamma_0}(0)=-\gamma_0<0$ 
and 
\[
g_{\gamma_0}\left(\frac1n\right)=1-\gamma_0\left(1-\frac1n\right)-\frac{\mu}{n} \ge 1 - \gamma_0 \ge 0.
\]
These together with continuity of $g_{\gamma_0}$ imply that there exists $\alpha_0 \in (0, 1/n]$ such 
that $g_{\gamma_0}(\alpha_0)=0$, that is, $\alpha_0$ satisfies \eqref{alphak} and is thus well-defined. 
In addition, by statement (iv) and $\gamma_0 \ge \mu$, one can see $\alpha_0 \ge  \sqrt{\mu}/n$. 
Therefore, statements (i) and (ii) hold for $k=0$. 

Suppose that the statements~(i) and~(ii) hold for some $k \ge 0$, that is,
$\alpha_k, \gamma_k>0$, $\sqrt{\mu}/n \le \alpha_k \le 1/n$ and 
$\mu \le \gamma_k \le 1$. 
Using these relations and \eqref{gammak}, one can see that 
$\gamma_{k+1}$ is well-defined and moreover $\mu \le \gamma_{k+1} \le 1$. 
In addition, we have
$\gamma_{k+1} >0$ due to statement (iv) and $\alpha_k>0$. 
Using the fact $\mu \le 1$ 
(see the remark after Assumption~\ref{asmp:strong-convex}), 
$\gamma_0 \in (0,1]$ and a similar argument as above, 
we obtain $g_{\gamma_k}(0)<0$ and $g_{\gamma_k}(1/n) \ge 0$, 
which along with continuity of $g_{\gamma_k}$ imply that there exists 
$\alpha_{k+1} \in (0, 1/n]$ such that $g_{\gamma_k}(\alpha_{k+1})=0$, 
that is, $\alpha_{k+1}$ satisfies \eqref{alphak} and is thus well-defined. 
By statement (iv) and $\gamma_{k+1} \ge \mu$, 
one can see that $\alpha_{k+1} \ge  \sqrt{\mu}/n$. 
This completes the induction and hence statements (i) and (ii) hold. 

Next, we show statement (iii) holds. 
Indeed, it follows from \eqref{gammak} that 
$$\gamma_{k+1} -\gamma_{k} = \alpha_k(\mu-\gamma_k),$$ which together with 
$\gamma_k \ge \mu$ and $\alpha_k >0$ implies that $\gamma_{k+1} \le \gamma_{k}$ and hence 
$\{\gamma_k\}^\infty_{k=0}$ is non-increasing.  Notice from statement (iv) and $\alpha_k>0$ 
that $\alpha_k = \sqrt{\gamma_{k+1}}/n$. It follows that  $\{\alpha_k\}^\infty_{k=0}$ is also 
non-increasing.

Statement (v) can be proved by using the same arguments 
in the proof of \cite[Lemma~2.2.4]{Nesterov04book},
and the details can be found in \cite[Section~4.2]{LuXiao13analysis}.
\end{proof}

\subsection{Construction and properties of $\hat\Psi_k$}
\label{sec:Phi-hat}

Motivated by \cite{FercoqRichtarik13}, we give an explicit expression of
$x^{(k)}$ as a convex combination of the vectors $z^{(0)},\ldots,z^{(k)}$,
and use the coefficients to construct a sequence 
$\{\hat\Psi_k\}_{k=1}^\infty$ to bound $\Psi(x^{(k)})$.

\begin{lemma}\label{lem:combination}
Let the sequences $\{\alpha_k\}^\infty_{k=0}$, $\{\gamma_k\}^\infty_{k=0}$, 
$\{x^{(k)}\}^\infty_{k=0}$ and $\{z^{(k)}\}^\infty_{k=0}$ be generated by 
Algorithm~\ref{alg:apcg}.
Then each $x^{(k)}$ is a convex combination of $z^{(0)},\ldots,z^{(k)}$. 
More specifically, for all $k\geq 0$,
\begin{equation}\label{eqn:combination}
    x^{(k)} = \sum_{l=0}^k \theta^{(k)}_l z^{(l)}, 
\end{equation}
where the constants $\theta^{(k)}_0,\ldots,\theta^{(k)}_k$ are nonnegative
and sum to~$1$.
Moreover, these constants can be obtained recursively by setting
$\theta_0^{(0)}=1$, $\theta^{(1)}_0=1-n\alpha_0$, $\theta^{(1)}_1=n\alpha_0$ 
and for $k\geq 1$,
\begin{equation}\label{eqn:combine-coeff}
    \theta^{(k+1)}_l = \left\{ \begin{array}{ll}
 n\alpha_k        & l=k+1, \\[1ex]
 \left(1-\frac{\mu}{n}\right) \frac{\alpha_k\gamma_k+n\alpha_{k-1}\gamma_{k+1}}
{\alpha_k \gamma_k+\gamma_{k+1}}-\frac{(1-\alpha_k)\gamma_k}{n\alpha_k}         & l=k,\\[1ex]
\left(1-\frac{\mu}{n}\right) \frac{\gamma_{k+1}}{\alpha_k \gamma_k+\gamma_{k+1}} \theta^{(k)}_l  & l=0,\ldots,k-1.
    \end{array} \right.
\end{equation}
\end{lemma}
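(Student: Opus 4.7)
The plan is a straightforward induction on $k$, with the combinatorial identities forced out by the defining equation \eqref{alphak} for $\alpha_k$. The base case $k=0$ is trivial since $z^{(0)}=x^{(0)}$ gives $\theta_0^{(0)}=1$. For $k=1$, one observes that \eqref{yk} collapses to $y^{(0)}=x^{(0)}=z^{(0)}$, so \eqref{xk} reduces to $x^{(1)}=(1-n\alpha_0)z^{(0)}+n\alpha_0 z^{(1)}$ (the term $(\mu/n)(z^{(0)}-y^{(0)})$ vanishes), which matches the stated values of $\theta_0^{(1)}$ and $\theta_1^{(1)}$.

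For the inductive step, I would substitute the hypothesis $x^{(k)}=\sum_{l=0}^{k}\theta_l^{(k)}z^{(l)}$ (with $\theta_k^{(k)}=n\alpha_{k-1}$) into \eqref{yk}, then plug the resulting expression for $y^{(k)}$ into \eqref{xk}, and collect the coefficient of each $z^{(l)}$. Rewriting \eqref{xk} as
\[
x^{(k+1)} = \Bigl(1-\frac{\mu}{n}\Bigr)y^{(k)} + \Bigl(\frac{\mu}{n}-n\alpha_k\Bigr)z^{(k)} + n\alpha_k z^{(k+1)},
\]
and using the fact from \eqref{yk} that $y^{(k)}$ is the convex combination $\frac{\alpha_k\gamma_k}{\alpha_k\gamma_k+\gamma_{k+1}}z^{(k)}+\frac{\gamma_{k+1}}{\alpha_k\gamma_k+\gamma_{k+1}}x^{(k)}$, the coefficients of $z^{(l)}$ for $l\le k-1$ and $l=k+1$ read off immediately and match \eqref{eqn:combine-coeff}. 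For $l=k$, I collect the contribution from $y^{(k)}$ and the stand-alone $z^{(k)}$ term, then substitute the inductive value $\theta_k^{(k)}=n\alpha_{k-1}$ and use the identity $n\alpha_k - \mu/n = (1-\alpha_k)\gamma_k/(n\alpha_k)$, which is exactly a rearrangement of \eqref{alphak}; this produces the slightly awkward middle case of \eqref{eqn:combine-coeff}.

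The sum-to-one property follows from the inductive relation $\sum_{l<k}\theta_l^{(k)} = 1-n\alpha_{k-1}$: summing the three branches of \eqref{eqn:combine-coeff} gives $\sum_l \theta_l^{(k+1)} = 1-\mu/n+n\alpha_k-(1-\alpha_k)\gamma_k/(n\alpha_k)$, which equals~$1$ by the same rearrangement of \eqref{alphak}. The main obstacle is nonnegativity of $\theta_k^{(k+1)}$, since the other two branches are manifestly nonnegative. To handle it, I would clear denominators and, using $(1-\alpha_k)\gamma_k=\gamma_{k+1}-\alpha_k\mu$ from \eqref{gammak}, reduce the inequality to
\[
n\alpha_k^2\gamma_k(1-n\alpha_k) + \gamma_{k+1}\alpha_k\bigl[n^2(\alpha_{k-1}-\alpha_k)-\mu(n\alpha_{k-1}-1)\bigr] \geq 0.
\]
Both bracketed quantities are nonnegative by Lemma~\ref{alphak-prop}: $n\alpha_k\le 1$ and $n\alpha_{k-1}\le 1$ come from part~(ii), while $\alpha_{k-1}\ge\alpha_k$ is part~(iii); this closes the induction.
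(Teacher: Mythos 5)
Your proposal is correct and follows essentially the same route as the paper: induction on $k$, substituting \eqref{yk} into \eqref{xk}, reading off the coefficients via $\theta_k^{(k)}=n\alpha_{k-1}$, and using the rearrangement $n\alpha_k-\mu/n=(1-\alpha_k)\gamma_k/(n\alpha_k)$ of \eqref{alphak} together with the facts $n\alpha_k\le 1$, $n\alpha_{k-1}\le 1$, $\alpha_{k-1}\ge\alpha_k$ from Lemma~\ref{alphak-prop}. The only cosmetic differences are that you prove the sum-to-one property by direct summation (the paper argues by composing affine combinations) and you establish $\theta_k^{(k+1)}\ge 0$ by clearing denominators into a polynomial inequality, whereas the paper rewrites $\theta_k^{(k+1)}$ as an explicit sum of three nonnegative terms; both variants rest on the same ingredients and your cleared-denominator identity checks out (it implicitly also uses $\gamma_{k+1}=n^2\alpha_k^2$ from Lemma~\ref{alphak-prop}(iv)).
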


\begin{proof}
We prove the statements by induction.
First, notice that $x^{(0)}=z^{(0)}=\theta^{(0)}_0 z^{(0)}$.
Using  this relation and \eqref{yk}, we see that $y^{(0)}=z^{(0)}$. 
From~\eqref{xk} and $y^{(0)}=z^{(0)}$, we obtain
\begin{eqnarray}
x^{(1)} &=& y^{(0)} + n\alpha_0\left(z^{(1)}-z^{(0)}\right)
    +\frac{\mu}{n}\left(z^{(0)}-y^{(0)}\right) \nonumber \\
&=& z^{(0)} + n\alpha_0\left(z^{(1)}-z^{(0)}\right) \nonumber \\
&=&  (1- n\alpha_0) z^{(0)} + n\alpha_0 z^{(1)}. \label{x1}
\end{eqnarray}
Since $\alpha_0 \in (0,1/n]$ (Lemma \ref{alphak-prop} (ii)), 
the vector $x^{(1)}$ is a convex combination of $z^{(0)}$ and $z^{(1)}$ 
with the coefficients $\theta^{(1)}_0=1-n\alpha_0$, $\theta^{(1)}_1=n\alpha_0$.
For $k=1$, substituting \eqref{yk} into \eqref{xk} yields
\begin{eqnarray*}
x^{(2)} 
&=& y^{(1)} + n\alpha_1\left(z^{(2)}-z^{(1)}\right)
    +\frac{\mu}{n}\left(z^{(1)}-y^{(1)}\right) \\
&=& \left(1-\frac{\mu}{n}\right) \frac{\gamma_2}{\alpha_1 \gamma_1+\gamma_2} x^{(1)}+ \left[\left(1-\frac{\mu}{n}\right) \frac{\alpha_1\gamma_1}
{\alpha_1 \gamma_1+\gamma_2}-\frac{n^2\alpha_1^2-\alpha_1\mu}{n\alpha_1} \right]z^{(1)} + n\alpha_1 z^{(2)}. 
\end{eqnarray*}
Substituting \eqref{x1} into the above equality, and using
$(1-\alpha_1) \gamma_1 = n^2\alpha^2_1-\alpha_1\mu$ from~\eqref{alphak},
we get
\beq \label{x2}
x^{(2)} =  \underbrace{\left(1-\frac{\mu}{n}\right) \frac{\gamma_2 (1-n\alpha_0)}{\alpha_1 \gamma_1+\gamma_2}}_{\theta^{(2)}_0} z^{(0)}+ \underbrace{ \left[\left(1-\frac{\mu}{n}\right) \frac{\alpha_1\gamma_1+n\alpha_0\gamma_2}
{\alpha_1 \gamma_1+\gamma_2} - \frac{(1-\alpha_1)\gamma_1}{n\alpha_1} \right]}_{\theta^{(2)}_1}z^{(1)} + \underbrace{n\alpha_1}_{\theta^{(2)}_2} z^{(2)}. 
\eeq
From the definition of $\theta^{(2)}_1$ in the above equation, 
we observe that
\beqas
\theta^{(2)}_1 &=& \left(1-\frac{\mu}{n}\right) \frac{\alpha_1\gamma_1+n\alpha_0\gamma_2}
{\alpha_1 \gamma_1+\gamma_2}-\frac{(1-\alpha_1)\gamma_1}{n\alpha_1} \\
&=& \left(1-\frac{\mu}{n}\right) \frac{\alpha_1\gamma_1(1-n\alpha_0)+n\alpha_0(\alpha_1\gamma_1+\gamma_2)}{\alpha_1 \gamma_1+\gamma_2}
-\frac{n^2\alpha^2_1-\alpha_1\mu}{n\alpha_1} \\ [6pt]
&=&  \frac{\alpha_1\gamma_1}{\alpha_1 \gamma_1+\gamma_2}\left(1-\frac{\mu}{n}\right) (1-n\alpha_0) + \left(1-\frac{\mu}{n}\right)n\alpha_0  
-n \alpha_1 + \frac{\mu}{n} \\ [6pt]
&=&  \frac{\alpha_1\gamma_1}{\alpha_1 \gamma_1+\gamma_2}\left(1-\frac{\mu}{n}\right) (1-n\alpha_0) + \left(1-\frac{\mu}{n}\right)n(\alpha_0-\alpha_1) 
+ \frac{\mu}{n} (1-n\alpha_1). 
\eeqas
From the above expression, and using the facts $\mu \le 1$, 
$\alpha_0 \ge \alpha_1$, $\gamma_k \ge 0$ and  
$0 \le \alpha_k \le 1/n$ (Lemma~\ref{alphak-prop}), 
we conclude that $\theta^{(2)}_1\geq 0$.
Also considering the definitions of $\theta^{(2)}_0$ and $\theta^{(2)}_2$ 
in~\eqref{x2}, we conclude that $\theta^{(2)}_l \ge 0$ for $0\le l\le 2$. 
In addition, one can observe from \eqref{yk}, \eqref{xk} and \eqref{x1} that 
$x^{(1)}$ is an affine combination of $z^{(0)}$ and $z^{(1)}$, 
$y^{(1)}$ is an affine combination of $z^{(1)}$ and $x^{(1)}$, 
and $x^{(2)}$ is an affine combination of $y^{(1)}$, $z^{(1)}$ and $z^{(2)}$. 
It is known that substituting one affine combination into another yields a new 
affine combination. Hence, the combination given in \eqref{x2} must be affine, 
which together with $\theta^{(2)}_l \ge 0$ for $0\le l\le 2$ implies that 
it is also a convex combination.

Now suppose the recursion~\eqref{eqn:combine-coeff} holds for some $k\geq 1$.
Substituting \eqref{yk} into \eqref{xk}, we obtain that
\begin{eqnarray*}
x^{(k+1)}  =  \left(1-\frac{\mu}{n}\right) \frac{\gamma_{k+1}}{\alpha_k \gamma_k+\gamma_{k+1}} x^{(k)}+ \left[\left(1-\frac{\mu}{n}\right) \frac{\alpha_k\gamma_k}
{\alpha_k \gamma_k+\gamma_{k+1}}-
\frac{(1-\alpha_k)\gamma_k}{n\alpha_k} \right]z^{(k)} + n\alpha_k z^{(k+1)}. 
\end{eqnarray*}
Further, substituting 
$x^{(k)}=n\alpha_{k-1} z^{(k)} +\sum_{l=0}^{k-1} \theta^{(k)}_l z^{(l)}$ 
(the induction hypothesis) into the above equation gives 
\begin{eqnarray}
x^{(k+1)} &=&
 \sum_{l=0}^{k-1} \underbrace{\left(1-\frac{\mu}{n}\right) \frac{\gamma_{k+1}}{\alpha_k \gamma_k+\gamma_{k+1}} 
 \theta^{(k)}_l}_{\theta^{(k+1)}_l} z^{(l)} + \underbrace{\left[\left(1-\frac{\mu}{n}\right) \frac{\alpha_k\gamma_k+n\alpha_{k-1}\gamma_{k+1}}
{\alpha_k \gamma_k+\gamma_{k+1}}-
\frac{(1-\alpha_k)\gamma_k}{n\alpha_k} \right]}_{\theta^{(k+1)}_k}z^{(k)} \nn \\
&& + \underbrace{n\alpha_k}_{\theta^{(k+1)}_{k+1}} z^{(k+1)}.
\label{eqn:combination-k+1}
\end{eqnarray}
This gives the form of \eqref{eqn:combination} and \eqref{eqn:combine-coeff}.
In addition, by the induction hypothesis, 
$x^{(k)}$ is an affine combination of $z^{(0)},\ldots,z^{(k)}$. 
Also, notice from \eqref{yk} and \eqref{xk} that 
$y^{(k)}$ is an affine combination of $z^{(k)}$ and $x^{(k)}$, and
$x^{(k+1)}$ is an affine combination of $y^{(k)}$, $z^{(k)}$ and $z^{(k+1)}$. 
Using these facts and a similar argument as for $x^{(2)}$, it 
follows that the combination \eqref{eqn:combination-k+1} must be affine. 

Finally, we claim $\theta^{(k+1)}_l \ge 0$ for all $l$. 
Indeed, we know from Lemma \ref{alphak-prop} that 
$\mu \le 1$, $\alpha_k\ge 0$, $\gamma_k \ge 0$. 
Also, $\theta^{(k)}_l \ge 0$ due to the induction hypothesis. 
It follows that $\theta^{(k+1)}_l \ge 0$ for all $l \neq k$. 
It remains to show that $\theta^{(k+1)}_k \ge 0$. 
To this end,  we again use \eqref{alphak} to obtain 
$(1-\alpha_k) \gamma_k = n^2\alpha^2_k-\alpha_k\mu$,
and use~\eqref{eqn:combine-coeff} 
and a similar argument as for $\theta^{(2)}_1$ to rewrite 
$\theta^{(k+1)}_k$ as 
\[
\theta^{(k+1)}_k = \frac{\alpha_k\gamma_k}{\alpha_k \gamma_k+\gamma_{k+1}}\left(1-\frac{\mu}{n}\right) (1-n\alpha_{k-1}) + \left(1-\frac{\mu}{n}\right)n(\alpha_{k-1}-\alpha_k) 
+ \frac{\mu}{n} (1-n\alpha_k).
\]
Together with $\mu \le 1$, $0 \le \alpha_k \le 1/n$, $\gamma_k \ge 0$ and 
$\alpha_{k-1} \ge \alpha_k$, this implies that $\theta^{(k+1)}_k \ge 0$. 
Therefore, $x^{(k+1)}$ is a convex combination of $z^{(0)},\ldots,z^{(k+1)}$ 
with the coefficients given in \eqref{eqn:combine-coeff}. 
\end{proof}

In the following lemma, we construct the sequence $\{\hat\Psi_k\}_{k=0}^\infty$
and prove a recursive inequality.

\begin{lemma}\label{lem:mono-combine}
Let $\hat \Psi_k$ denotes the convex combination of 
$\Psi(z^{(0)}),\ldots,\Psi(z^{(k)})$ using the same coefficients given in
Lemma~\ref{lem:combination}, i.e., 
\[
    \hat\Psi_k = \sum_{l=0}^k \theta^{(k)}_l \Psi(z^{(l)}).
\]
Then for all $k\geq 0$, we have $\Psi(x^{(k)}) \leq \hat\Psi_k$ and
\begin{equation}\label{eqn:mono-combine}
\E_{i_k}[\hat\Psi_{k+1}] ~\leq~ \alpha_k\Psi(\tz^{(k+1)}) + (1-\alpha_k)\hat\Psi_k.
\end{equation}
\end{lemma}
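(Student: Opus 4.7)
The first inequality $\Psi(x^{(k)})\leq \hat\Psi_k$ is immediate: by Lemma~\ref{lem:combination}, $x^{(k)}$ is the convex combination $\sum_{l=0}^k \theta^{(k)}_l z^{(l)}$, so Jensen's inequality applied to the convex function $\Psi$ yields $\Psi(x^{(k)})\leq \sum_{l=0}^k \theta^{(k)}_l \Psi(z^{(l)}) = \hat\Psi_k$. I would dispatch this first and then reuse it inside the proof of \eqref{eqn:mono-combine}.

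For the recursive inequality, my plan is to isolate the contribution of $z^{(k+1)}$ in $\hat\Psi_{k+1}$ and then take the conditional expectation with respect to $i_k$. Setting $D_k := \alpha_k\gamma_k+\gamma_{k+1}$ and $c := (1-\mu/n)\gamma_{k+1}/D_k$, the recursion \eqref{eqn:combine-coeff} gives $\theta^{(k+1)}_l = c\,\theta^{(k)}_l$ for $l<k$, so substituting $\sum_{l=0}^{k-1}\theta^{(k)}_l \Psi(z^{(l)}) = \hat\Psi_k - n\alpha_{k-1}\Psi(z^{(k)})$ produces the clean decomposition
\[
\hat\Psi_{k+1} = n\alpha_k\,\Psi(z^{(k+1)}) + \bigl[\theta^{(k+1)}_k - c\,n\alpha_{k-1}\bigr]\,\Psi(z^{(k)}) + c\,\hat\Psi_k.
\]
Only $\Psi(z^{(k+1)})$ depends on $i_k$, and by the block separability of $\Psi$ together with the update rule \eqref{eqn:z-update}, I would establish the exact identity $\E_{i_k}[\Psi(z^{(k+1)})] = \tfrac{1}{n}\Psi(\tz^{(k+1)}) + \tfrac{n-1}{n}\Psi(w)$, where $w := (1-\beta_k)z^{(k)}+\beta_k y^{(k)}$. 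Rewriting \eqref{yk} as $y^{(k)} = (\alpha_k\gamma_k/D_k)\,z^{(k)}+(\gamma_{k+1}/D_k)\,x^{(k)}$, we see that $w = (1-\rho)z^{(k)}+\rho\, x^{(k)}$ with $\rho := \alpha_k\mu/D_k \in [0,1]$, so convexity of $\Psi$ combined with the first part of the lemma gives $\Psi(w)\leq (1-\rho)\Psi(z^{(k)}) + \rho\,\hat\Psi_k$.

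Assembling these pieces yields a bound of the form $\E_{i_k}[\hat\Psi_{k+1}]\leq \alpha_k\,\Psi(\tz^{(k+1)}) + A\,\Psi(z^{(k)}) + B\,\hat\Psi_k$ with coefficients $A,B$ expressible in terms of $\alpha_k,\gamma_k,\gamma_{k+1},\mu,n$. Using the identity $(1-\alpha_k)\gamma_k = n^2\alpha_k^2-\alpha_k\mu$ from \eqref{alphak} to simplify $\theta^{(k+1)}_k - c\,n\alpha_{k-1} = (1-\mu/n)(\alpha_k\gamma_k/D_k) - n\alpha_k + \mu/n$, a short telescoping computation using $(1-\mu/n)(\alpha_k\gamma_k+\gamma_{k+1})/D_k = 1-\mu/n$ gives $A+B = 1-\alpha_k$. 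The main obstacle is the separate identity $A=0$: after clearing denominators by $D_k/\alpha_k$ and substituting $\gamma_{k+1}=n^2\alpha_k^2$ from Lemma~\ref{alphak-prop}(iv), I expect the equation $A=0$ to reduce exactly to $\gamma_{k+1}=(1-\alpha_k)\gamma_k+\alpha_k\mu$, which is the defining relation \eqref{gammak}. With $A=0$ in hand, $B=1-\alpha_k$ follows, and \eqref{eqn:mono-combine} is immediate.
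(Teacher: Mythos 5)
Your argument is essentially the paper's own proof: the same decomposition of $\hat\Psi_{k+1}$ via $\sum_{l=0}^{k-1}\theta^{(k)}_l\Psi(z^{(l)})=\hat\Psi_k-n\alpha_{k-1}\Psi(z^{(k)})$, the same exact identity $\E_{i_k}[\Psi(z^{(k+1)})]=\tfrac1n\Psi(\tz^{(k+1)})+\tfrac{n-1}{n}\Psi\bigl((1-\beta_k)z^{(k)}+\beta_k y^{(k)}\bigr)$, the same rewriting of that point as a convex combination of $z^{(k)}$ and $x^{(k)}$ followed by $\Psi(x^{(k)})\le\hat\Psi_k$; your algebra also checks out ($A=0$ does reduce, after clearing $D_k$ and using $\gamma_{k+1}=n^2\alpha_k^2$, to \eqref{gammak}, and $A+B=1-\alpha_k$ follows from the short computation you indicate), the only cosmetic difference being that the paper computes $B=1-\alpha_k$ directly and deduces $A=0$ from the affine-combination property of the coefficients, whereas you compute the sum and $A=0$ directly. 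The one thing to add: your decomposition relies on $\theta^{(k)}_k=n\alpha_{k-1}$ and the recursion \eqref{eqn:combine-coeff}, which hold only for $k\ge 1$, so the case $k=0$ of \eqref{eqn:mono-combine} needs the separate one-line verification using $y^{(0)}=z^{(0)}$, $\hat\Psi_0=\Psi(x^{(0)})$ and $\theta^{(1)}_0=1-n\alpha_0$, $\theta^{(1)}_1=n\alpha_0$, exactly as in the paper.
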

\begin{proof} 
The first result $\Psi(x^{(k)})\leq\hat\Psi_k$ follows directly from convexity
of~$\Psi$.
We now prove~\eqref{eqn:mono-combine}. First we deal with the case $k=0$. 
Using \eqref{eqn:z-update}, \eqref{eqn:combine-coeff}, and the facts $y^{(0)}=z^{(0)}$ and $\hat\Psi_0=\Psi(x^{(0)})$, we get
\begin{eqnarray*}
    \E_{i_0}[\hat\Psi_1] 
&=& \E_{i_0}\left[ n\alpha_0\Psi(z^{(1)}) + (1-n\alpha_0)\Psi(z^{(0)})\right] \\
&=& \E_{i_0}\Bigl[ n\alpha_0\Bigl(\Psi_{i_0}(\tz^{(1)}_{i_0})
  +\textstyle\sum_{j\neq i_0}\Psi_j(z_j^0)\Bigr)\Bigr] + (1-n\alpha_0)\Psi(z^{(0)})\\
&=& \alpha_0 \Psi(\tz^{(1)}) + (n-1)\alpha_0\Psi(z^{(0)}) + (1-n\alpha_0)\Psi(x^{(0)})\\
&=& \alpha_0 \Psi(\tz^{(1)}) + (1-\alpha_0)\Psi(x^{(0)}) \\ 
&=& \alpha_0 \Psi(\tz^{(1)}) + (1-\alpha_0)\hat\Psi_0 .
\end{eqnarray*}

For $k\geq 1$, we use \eqref{eqn:z-update} and the definition of~$\beta_k$
in~\eqref{gammak} to obtain that 
\beqa
\E_{i_k}\left[\Psi(z^{(k+1)})\right] 
&=&  \E_{i_k}\biggl[\Psi_{i_k}(z^{(k+1)}_{i_k})
+\sum_{j\neq i_k}\Psi_j(z^{(k+1)}_j)\biggr] \nn \\
&=& \frac1n\Psi(\tz^{(k+1)}) + \left(1-\frac1n\right)\Psi\left( \frac{(1-\alpha_k)\gamma_k}{\gamma_{k+1}} z^{(k)} +\frac{\alpha_k \mu}{\gamma_{k+1}}  y^{(k)} \right). \label{exp-ik1}  
\eeqa
Using \eqref{gammak} and \eqref{yk}, one can observe that 
\beqas
\frac{(1-\alpha_k)\gamma_k}{\gamma_{k+1}}z^{(k)} +\frac{\alpha_k \mu}{\gamma_{k+1}}  y^{(k)} &=& 
\frac{(1-\alpha_k)\gamma_k}{\gamma_{k+1}}z^{(k)} +\frac{\alpha_k \mu}{\gamma_{k+1} (\alpha_k \gamma_k+\gamma_{k+1})} \left(\alpha_k \gamma_k z^{(k)} + \gamma_{k+1} x^{(k)}\right) \nn \\ [6pt]
&=& \left(1-\frac{\alpha_k \mu}{\alpha_k \gamma_k+\gamma_{k+1}}\right) z^{(k)}+ \frac{\alpha_k \mu}{\alpha_k \gamma_k+\gamma_{k+1}} x^{(k)}. 
\eeqas
It follows from the above equation and convexity of $\Psi$ that 
\[
\Psi\left( \frac{(1-\alpha_k)\gamma_k}{\gamma_{k+1}} z^{(k)} +\frac{\alpha_k \mu}{\gamma_{k+1}}  y^{(k)} \right) \le \left(1-\frac{\alpha_k \mu}{\alpha_k \gamma_k+\gamma_{k+1}}\right) \Psi(z^{(k)}) + 
\frac{\alpha_k \mu}{\alpha_k \gamma_k+\gamma_{k+1}} \Psi(x^{(k)}),
\]
which together with \eqref{exp-ik1} yields 
\beq \label{exp-ik2}
\E_{i_k}\left[\Psi(z^{(k+1)})\right] \le 
\frac1n\Psi(\tz^{(k+1)}) + \left(1-\frac1n\right) \left[\left(1-\frac{\alpha_k \mu}{\alpha_k \gamma_k+\gamma_{k+1}}\right) \Psi(z^{(k)}) + 
\frac{\alpha_k \mu}{\alpha_k \gamma_k+\gamma_{k+1}} \Psi(x^{(k)})\right]. 
\eeq 
In addition, from the definition of $\hat\Psi_k$ and
$\theta^{(k)}_k=n\alpha_{k-1}$, we have
\beq \label{hpsik}
\sum_{l=0}^{k-1}\theta^{(k)}_l \Psi(z^{(l)}) = \hat\Psi_k - n\alpha_{k-1} \Psi(z^{(k)}).
\eeq

Next, using the definition of $\hat\Psi_k$ and \eqref{eqn:combine-coeff},
we obtain 
\begin{eqnarray}
\E_{i_k}[\hat\Psi_{k+1}]
&=& n\alpha_k \E_{i_k}\left[\Psi(z^{(k+1)}) \right]+  \left[\left(1-\frac{\mu}{n}\right) \frac{\alpha_k\gamma_k+n\alpha_{k-1}\gamma_{k+1}}
{\alpha_k \gamma_k+\gamma_{k+1}}-\frac{(1-\alpha_k)\gamma_k}{n\alpha_k}\right] \Psi(z^{(k)})  \nonumber \\
&& +  \left(1-\frac{\mu}{n}\right) \frac{\gamma_{k+1}}{\alpha_k \gamma_k+\gamma_{k+1}} \sum_{l=0}^{k-1}\theta^{(k)}_l \Psi(z^{(l)}) . 
\label{exp-ik3} 
\end{eqnarray}
Plugging~\eqref{exp-ik2} and~\eqref{hpsik} into~\eqref{exp-ik3} yields
\begin{eqnarray}
\E_{i_k}[\hat\Psi_{k+1}]
&\le& \alpha_k \Psi(\tz^{(k+1)}) + (n-1) \alpha_k \left[\left(1-\frac{\alpha_k \mu}{\alpha_k 
 \gamma_k+\gamma_{k+1}}\right) \Psi(z^{(k)}) + 
\frac{\alpha_k \mu}{\alpha_k \gamma_k+\gamma_{k+1}} \Psi(x^{(k)})\right] 
\nn \\[1ex]
&& + \left[\left(1-\frac{\mu}{n}\right) \frac{\alpha_k\gamma_k+n\alpha_{k-1}\gamma_{k+1}}
{\alpha_k \gamma_k+\gamma_{k+1}}-\frac{(1-\alpha_k)\gamma_k}{n\alpha_k}\right] \Psi(z^{(k)}) 
\label{exp-ik4}\\[1ex]
&& + \left(1-\frac{\mu}{n}\right) \frac{\gamma_{k+1}}{\alpha_k \gamma_k+\gamma_{k+1}} \left(
\hat\Psi_k - n\alpha_{k-1} \Psi(z^{(k)})\right) \nn \\[1ex]
& \le & \alpha_k \Psi(\tz^{(k+1)}) 
+ \underbrace{\frac{ (n-1) \alpha^2_k\mu+\left(1-\frac{\mu}{n}\right)\gamma_{k+1}}{\alpha_k \gamma_k+\gamma_{k+1}}}_{\Gamma} \hat \Psi_k 
\label{exp-ik5} \\
&&+ \underbrace{\left[(n-1) \alpha_k \left(1-\frac{\alpha_k \mu}{\alpha_k 
 \gamma_k+\gamma_{k+1}}\right) + \left(1-\frac{\mu}{n}\right) \frac{\alpha_k\gamma_k}
{\alpha_k \gamma_k+\gamma_{k+1}}-\frac{(1-\alpha_k)\gamma_k}{n\alpha_k}\right]}_{\Delta}\Psi(z^{(k)}), \nn 
\end{eqnarray}
where the second inequality is due to $\Psi(x^{(k)}) \le \hat \Psi_k$. 
Notice that the right hand side of \eqref{exp-ik2} is an affine combination 
of $\Psi(\tz^{(k+1)})$, $\Psi(z^{(k)})$ and $\Psi(x^{(k)})$, 
and the right hand side of \eqref{exp-ik3} is an affine combination of 
$\Psi(z^{(0)}), \ldots, \Psi(z^{(k+1)})$. 
In addition, all operations in~\eqref{exp-ik4} and~\eqref{exp-ik5} preserves 
the affine combination property.   
Using these facts, one can observe that the 
right hand side of \eqref{exp-ik5} is also an affine combination of 
$\Psi(\tz^{(k+1)})$, $\Psi(z^{(k)})$ and $\hat\Psi_k$, namely, 
$\alpha_k+\Delta+\Gamma=1$, where $\Delta$ and $\Gamma$ are defined above. 

We next show that $\Gamma=1-\alpha_k$ and $\Delta=0$. 
Indeed, notice that from~\eqref{gammak} we have
\beq\label{gamma-alpha}
\alpha_k \gamma_k+\gamma_{k+1} = \alpha_k \mu + \gamma_k.
\eeq
Using this relation, 
$\gamma_{k+1}=n^2\alpha^2_k$ (Lemma \ref{alphak-prop} (iv)), 
and the definition of $\Gamma$ in~\eqref{exp-ik5}, we get
\beqas
\Gamma &=& \frac{ (n-1) \alpha^2_k\mu+\left(1-\frac{\mu}{n}\right)\gamma_{k+1}}{\alpha_k \gamma_k+\gamma_{k+1}} 
~=~ \frac{ (n-1) \alpha^2_k\mu+\gamma_{k+1}-\frac{\mu}{n}\gamma_{k+1}}{\alpha_k \gamma_k+\gamma_{k+1}} \\ [6pt]
&=& \frac{ (n-1) \alpha^2_k\mu+\gamma_{k+1}-\frac{\mu}{n}(n^2\alpha^2_k)}{\alpha_k \gamma_k+\gamma_{k+1}} 
~=~  \frac{\gamma_{k+1}-\alpha^2_k\mu}{\alpha_k \gamma_k+\gamma_{k+1}} \\
&=& 1 - \alpha_k \left(\frac{\alpha_k\mu+\gamma_k}{\alpha_k \gamma_k+\gamma_{k+1}}\right) 
~=~ 1-\alpha_k, 
\eeqas
where the last equalities is due to \eqref{gamma-alpha}.  
Finally, $\Delta=0$ follows from $\Gamma=1-\alpha_k$ and 
$\alpha_k+\Delta+\Gamma=1$.
%or it can be checked directly from its definition.
These together with the inequality~\eqref{exp-ik5} yield the desired result. 
\end{proof}

\subsection{Proof of Theorem~\ref{thm:apcg-rate}}
\label{sec:prove-thm1}

We are now ready to present a proof for Theorem~\ref{thm:apcg-rate}.
We note that the proof in this subsection can also be recast into 
the framework of randomized estimate sequence developed 
in \cite{LuXiao13analysis,LeeSidford13},
but here we give a straightforward proof without using that machinery.

Dividing both sides of \eqref{alphak} by $n\alpha_k$ gives  
\beq \label{nalphak}
n\alpha_k = \frac{(1-\alpha_k)\gamma_k}{n\alpha_k}+\frac{\mu}{n}.
\eeq
Observe from \eqref{yk} that 
\beq \label{z-y}
z^{(k)}-y^{(k)} = -\frac{\gamma_{k+1}}{\alpha_k\gamma_k}\left(x^{(k)}-y^{(k)}\right) .
\eeq
It follow from~\eqref{xk} and~\eqref{nalphak} that 
\beqas
x^{(k+1)} - y^{(k)} &=& n\alpha_k z^{(k+1)} - \frac{(1-\alpha_k)\gamma_k}{n\alpha_k}  z^{(k)} - \frac{\mu}{n} y^{(k)} \\ 
&=& n\alpha_k z^{(k+1)}  - \frac{(1-\alpha_k)\gamma_k}{n\alpha_k}(z^{(k)}-y^{(k)}) -  \left(\frac{(1-\alpha_k)\gamma_k}{n\alpha_k}+\frac{\mu}{n} \right)y^{(k)},
\eeqas
which together with \eqref{nalphak}, \eqref{z-y} and $\gamma_{k+1}=n^2\alpha^2_k$ 
(Lemma \ref{alphak-prop} (iv)) gives 
\beqas
x^{(k+1)} - y^{(k)} &=& n\alpha_k z^{(k+1)} + \frac{(1-\alpha_k)\gamma_{k+1}}{n\alpha^2_k}\left(x^{(k)}-y^{(k)}\right) - n\alpha_k y^{(k)} \\
&=& n \alpha_k z^{(k+1)} + n (1-\alpha_k)(x^{(k)}-y^{(k)}) - n\alpha_k y^{(k)} \\
&=&n \left[\alpha_k(z^{(k+1)}-y^{(k)})+(1-\alpha_k)(x^{(k)}-y^{(k)})\right].
\eeqas
Using this relation, \eqref{eqn:x-update} and Assumption~\ref{asmp:coord-smooth}, we have
\begin{eqnarray*}
f(x^{(k+1)})
&\leq& f(y^{(k)})+\left\langle\nabla_{i_k} f(y^{(k)}), ~x^{(k+1)}_{i_k}-y^{(k)}_{i_k}\right\rangle 
+~\frac{L_{i_k}}{2}\left\|x^{(k+1)}_{i_k}-y^{(k)}_{i_k}\right\|_2^2 \\ 
&=& f(y^{(k)})+ n\left\langle\nabla_{i_k} f(y^{(k)}),~\left[\alpha_k(z^{(k+1)}-y^{(k)})+(1-\alpha_k)(x^{(k)}-y^{(k)})\right]_{i_k}\right\rangle\\
&& +~\frac{n^2 L_{i_k}}{2}\left\|\left[\alpha_k(z^{(k+1)}-y^{(k)})+(1-\alpha_k)(x^{(k)}-y^{(k)})\right]_{i_k}\right\|_2^2 \\
&=& (1-\alpha_k)\left[f(y^{(k)})+ n\left\langle\nabla_{i_k} f(y^{(k)}),(x^{(k)}_{i_k}-y^{(k)}_{i_k})\right\rangle\right] \\
&& + \alpha_k\left[f(y^{(k)})+ n\left\langle\nabla_{i_k} f(y^{(k)}),(z^{(k+1)}_{i_k}-y^{(k)}_{i_k})\right\rangle\right] \\
&& +~\frac{n^2 L_{i_k}}{2}\left\|\left[\alpha_k(z^{(k+1)}-y^{(k)})+(1-\alpha_k)(x^{(k)}-y^{(k)})\right]_{i_k}\right\|_2^2.
\end{eqnarray*}
Taking expectation on both sides of the above inequality with respect to~$i_k$,
and noticing that $z^{(k+1)}_{i_k} = \tz^{(k+1)}_{i_k}$, we get
\begin{eqnarray}
\E_{i_k}\left[f(x^{(k+1)})\right]
&\leq& (1-\alpha_k)\left[f(y^{(k)})+ \left\langle\nabla f(y^{(k)}),(x^{(k)}-y^{(k)})\right\rangle\right] \nn \\
&& + \alpha_k\left[f(y^{(k)})+ \left\langle\nabla f(y^{(k)}),(\tz^{(k+1)}-y^{(k)})\right\rangle\right] \nn \\
&& +~\frac{n}{2}\left\|\alpha_k(\tz^{(k+1)}-y^{(k)})+(1-\alpha_k)(x^{(k)}-y^{(k)})\right\|_L^2 \nn \\
&\leq& (1-\alpha_k) f(x^{(k)}) + \alpha_k\left[f(y^{(k)})+ \left\langle\nabla f(y^{(k)}),(\tz^{(k+1)}-y^{(k)})\right\rangle \right] \nn \\
&& +~\frac{n}{2}\left\|\alpha_k(\tz^{(k+1)}-y^{(k)})+(1-\alpha_k)(x^{(k)}-y^{(k)})\right\|_L^2, \label{exp-f-ik}
\end{eqnarray}
where the second inequality follows from convexity of $f$.  

In addition, by \eqref{gammak}, \eqref{z-y} and $\gamma_{k+1}=n^2\alpha^2_k$ (Lemma \ref{alphak-prop} (iv)),  we have
\beqa
\frac{n}{2}\left\|\alpha_k(\tz^{(k+1)}-y^{(k)})+(1-\alpha_k)(x^{(k)}-y^{(k)})\right\|_L^2 
&=& \frac{n}{2}\left\|\alpha_k(\tz^{(k+1)}-y^{(k)})-\frac{\alpha_k(1-\alpha_k)\gamma_k}{\gamma_{k+1}}(z^{(k)}-y^{(k)})\right\|_L^2 \nn \\ 
&=& \frac{n\alpha^2_k}{2}\left\|\tz^{(k+1)}-y^{(k)}-\frac{(1-\alpha_k)\gamma_k}{\gamma_{k+1}}(z^{(k)}-y^{(k)})\right\|_L^2 \nn \\ 
%&=& \frac{n\alpha^2_k}{2}\left\|\tz^{(k+1)}-\frac{(1-\alpha_k)\gamma_k}{\gamma_{k+1}}z^{(k)}-\frac{\alpha_k\mu}{\gamma_{k+1}}y^{(k)}\right\|_L^2\nn \\ 
&=& \frac{\gamma_{k+1}}{2n}\left\|\tz^{(k+1)}-\frac{(1-\alpha_k)\gamma_k}{\gamma_{k+1}}z^{(k)}-\frac{\alpha_k\mu}{\gamma_{k+1}}y^{(k)}\right\|_L^2,\label{eqn:B-first}
\eeqa
where the first equality used~\eqref{z-y}, 
the third one is due to~\eqref{alphak} and~\eqref{gammak}, 
and $\gamma_{k+1}=n^2\alpha^2_k$. 
%and the last one follows from $\gamma_{k+1}=n^2\alpha^2_k$. 
This equation together with~\eqref{exp-f-ik} yields 
\begin{eqnarray*}
\E_{i_k}\left[f(x^{(k+1)})\right] 
&\leq& (1-\alpha_k) f(x^{(k)}) 
 + \alpha_k\Bigg[f(y^{(k)})+ \left\langle\nabla f(y^{(k)}),\tz^{(k+1)}-y^{(k)}\right\rangle \\
&& \hspace{4cm}
+ \frac{\gamma_{k+1}}{2n\alpha_k}\left\|\tz^{(k+1)}-\frac{(1-\alpha_k)\gamma_k}{\gamma_{k+1}}z^{(k)}-\frac{\alpha_k\mu}{\gamma_{k+1}}y^{(k)}\right\|_L^2 
\Bigg].
\end{eqnarray*}
Using Lemma~\ref{lem:mono-combine}, we have
\begin{eqnarray*}
\E_{i_k}\left[f(x^{(k+1)}) + \hat\Psi_{k+1} \right]
\leq \E_{i_k}[f(x^{(k+1)})]+\alpha_k\Psi(\tz^{(k+1)}) + (1-\alpha_k) \hat\Psi_k.
\end{eqnarray*}
Combining the above two inequalities, one can obtain that
\begin{eqnarray}
\E_{i_k}\left[f(x^{(k+1)}) + \hat\Psi_{k+1} \right]
&\leq& (1-\alpha_k) \left( f(x^{(k)}) + \hat\Psi_k\right) 
+ \alpha_k V(\tz^{(k+1)}),
% + \alpha_k \Bigg[f(y^{(k)})+ \left\langle\nabla f(y^{(k)}),\tz^{(k+1)}-y^{(k)}\right\rangle  \nn \\
%&& + \frac{\gamma_{k+1}}{2n\alpha_k}\left\|\tz^{(k+1)}-\frac{(1-\alpha_k)\gamma_k}{\gamma_{k+1}}z^{(k)}-\frac{\alpha_k\mu}{\gamma_{k+1}}y^{(k)}\right\|_L^2 +\Psi(\tz^{(k+1)})\Bigg] ,
\label{F-uppbd1}
\end{eqnarray}
where
\[
V(x) = f(y^{(k)})+ \left\langle\nabla f(y^{(k)}),x-y^{(k)}\right\rangle  
+ \frac{\gamma_{k+1}}{2n\alpha_k}\left\|x-\frac{(1-\alpha_k)\gamma_k}{\gamma_{k+1}}z^{(k)}-\frac{\alpha_k\mu}{\gamma_{k+1}}y^{(k)}\right\|_L^2 +\Psi(x).
\]
Comparing with the definition of $\tz^{(k+1)}$ in~\eqref{eqn:full-z-update},
we see that 
\beq \label{tz}
\tz^{(k+1)} = \argmin_{x \in \Re^N} V(x).
\eeq
Notice that $V$ has convexity parameter 
$\frac{\gamma_{k+1}}{n\alpha_k}=n\alpha_k$ 
with respect to $\|\cdot\|_L$. 
By the optimality condition of \eqref{tz}, 
we have that for any $x^\star \in X^*$,
\[
V(x^\star) \ge V(\tz^{(k+1)}) + \frac{\gamma_{k+1}}{2n\alpha_k}\|x^\star-\tz^{(k+1)}\|^2_L.
\]
Using the above inequality and the definition of $V$, 
%and the fact that the convexity parameter of $f$ is $\mu$ with respect to $\|\cdot\|_L$, 
we obtain 
\begin{eqnarray*}
V(\tz^{(k+1)}) &\le & V(x^\star) - \frac{\gamma_{k+1}}{2n\alpha_k}\|x^\star-\tz^{(k+1)}\|^2_L \\
&=& f(y^{(k)})+ \left\langle\nabla f(y^{(k)}),x^\star-y^{(k)}\right\rangle 
+ \frac{\gamma_{k+1}}{2n\alpha_k}\left\|x^\star-\frac{(1-\alpha_k)\gamma_k}{\gamma_{k+1}}z^{(k)}-\frac{\alpha_k\mu}{\gamma_{k+1}}y^{(k)}\right\|_L^2 \\
&& +\Psi(x^\star) 
- \frac{\gamma_{k+1}}{2n\alpha_k}\|x^\star-\tz^{(k+1)}\|^2_L .
\end{eqnarray*}
Now using the assumption that~$f$ has convexity parameter~$\mu$ 
with respect to $\|\cdot\|_L$, we have
\begin{eqnarray*}
V(\tz^{(k+1)}) &\le& f(x^\star) - \frac{\mu}{2} \|x^\star-y^{(k)}\|^2_L + \frac{\gamma_{k+1}}{2n\alpha_k}\left\|x^\star-\frac{(1-\alpha_k)\gamma_k}{\gamma_{k+1}}z^{(k)}-\frac{\alpha_k\mu}{\gamma_{k+1}}y^{(k)}\right\|_L^2 +\Psi(x^\star) \\
&& - \frac{\gamma_{k+1}}{2n\alpha_k}\|x^\star-\tz^{(k+1)}\|^2_L .
\end{eqnarray*}
Combining this inequality with \eqref{F-uppbd1}, one see that
\begin{eqnarray}
\E_{i_k}\left[f(x^{(k+1)}) + \hat\Psi_{k+1} \right]
&\leq& (1-\alpha_k) \left( f(x^{(k)}) 
+ \hat\Psi_k\right) + \alpha_k F^\star - \frac{\alpha_k\mu}{2} \|x^\star-y^{(k)}\|^2_L 
\label{F-uppbd} \\
&& + \frac{\gamma_{k+1}}{2n}\left\|x^\star-\frac{(1-\alpha_k)\gamma_k}{\gamma_{k+1}}z^{(k)}-\frac{\alpha_k\mu}{\gamma_{k+1}}y^{(k)}\right\|_L^2 - \frac{\gamma_{k+1}}{2n}\|x^\star-\tz^{(k+1)}\|^2_L.  \nn
\end{eqnarray}
In addition, it follows from \eqref{gammak} and convexity of $\|\cdot\|^2_L$ that  
\beq \label{Lnorm-ineq}
\left\|x^\star-\frac{(1-\alpha_k)\gamma_k}{\gamma_{k+1}}z^{(k)}-\frac{\alpha_k\mu}{\gamma_{k+1}}y^{(k)}\right\|_L^2 \le \frac{(1-\alpha_k)\gamma_k}{\gamma_{k+1}} 
\|x^\star-z^{(k)}\|^2_L+\frac{\alpha_k\mu}{\gamma_{k+1}}\|x^\star-y^{(k)}\|_L^2.
\eeq
Using this relation and \eqref{eqn:z-update}, we observe that 
\begin{eqnarray*}
\E_{i_k}\left[\frac{\gamma_{k+1}}{2}\|x^\star-z^{(k+1)}\|^2_L\right] &=& \frac{\gamma_{k+1}}{2} 
\left[\frac{n-1}{n}\left\|x^\star-\frac{(1-\alpha_k)\gamma_k}{\gamma_{k+1}}z^{(k)}-\frac{\alpha_k\mu}{\gamma_{k+1}}y^{(k)}\right\|_L^2+\frac{1}{n}\|x^\star-\tz^{(k+1)}\|^2_L\right] \\
&=& \frac{\gamma_{k+1}(n-1)}{2n}\left\|x^\star-\frac{(1-\alpha_k)\gamma_k}{\gamma_{k+1}}z^{(k)}-\frac{\alpha_k\mu}{\gamma_{k+1}}y^{(k)}\right\|_L^2+\frac{\gamma_{k+1}}{2n}\|x^\star-\tz^{(k+1)}\|^2_L \\
&=&\frac{\gamma_{k+1}}{2}\left\|x^\star-\frac{(1-\alpha_k)\gamma_k}{\gamma_{k+1}}z^{(k)}-\frac{\alpha_k\mu}{\gamma_{k+1}}y^{(k)}\right\|_L^2 \\
&& - \frac{\gamma_{k+1}}{2n}\left\|x^\star-\frac{(1-\alpha_k)\gamma_k}{\gamma_{k+1}}z^{(k)}-\frac{\alpha_k\mu}{\gamma_{k+1}}y^{(k)}\right\|_L^2 
+\frac{\gamma_{k+1}}{2n}\|x^\star-\tz^{(k+1)}\|^2_L \\
&\le& \frac{(1-\alpha_k)\gamma_k}{2}\|x^\star-z^{(k)}\|_L^2+\frac{\alpha_k\mu}{2}\|x^\star-y^{(k)}\|_L^2 \\
&& - \frac{\gamma_{k+1}}{2n}\left\|x^\star-\frac{(1-\alpha_k)\gamma_k}{\gamma_{k+1}}z^{(k)}-\frac{\alpha_k\mu}{\gamma_{k+1}}y^{(k)}\right\|_L^2 
+\frac{\gamma_{k+1}}{2n}\|x^\star-\tz^{(k+1)}\|^2_L,
\end{eqnarray*}
where the inequality follows from \eqref{Lnorm-ineq}. Summing up this inequality and \eqref{F-uppbd} 
gives
\[
\E_{i_k}\left[f(x^{(k+1)}) + \hat\Psi_{k+1}+\frac{\gamma_{k+1}}{2}\|x^\star-z^{(k+1)}\|^2_L\right] 
\le  (1-\alpha_k) \left( f(x^{(k)}) 
+ \hat\Psi_k+\frac{\gamma_k}{2}\|x^\star-z^{(k)}\|^2_L\right) + \alpha_k F^\star.
\]
Taking expectation on both sides with respect to $\xi_{k-1}$ yields
\[
\E_{\xi_k}\left[f(x^{(k+1)}) + \hat\Psi_{k+1}-F^\star+\frac{\gamma_{k+1}}{2}\|x^\star-z^{(k+1)}\|^2_L\right] 
\le  (1-\alpha_k) \E_{\xi_{k-1}}\left[ f(x^{(k)}) 
+ \hat\Psi_k-F^\star+\frac{\gamma_k}{2}\|x^\star-z^{(k)}\|^2_L\right],
\]
which together with $\hat\Psi_0=\Psi(x^{(0)})$, $z^{(0)}=x^{(0)}$ and $\lambda_k=\Pi^{k-1}_{i=0}(1-\alpha_i)$ 
gives 
\[
\E_{\xi_{k-1}}\left[ f(x^{(k)}) 
+ \hat\Psi_k-F^\star+\frac{\gamma_k}{2}\|x^\star-z^{(k)}\|^2_L\right] \le \lambda_k\left[F(x^{(0)}) -F^\star+\frac{\gamma_0}{2}\|x^\star-x^{(0)}\|^2_L\right].
\]
The conclusion of Theorem~\ref{thm:apcg-rate} immediately follows from 
$F(x^{(k)})\leq f(x^{(k)}) + \hat\Psi_k$,
Lemma \ref{alphak-prop} (v), the arbitrariness of $x^\star$ and the definition of $R_0$.

\section{Efficient implementation}
\label{sec:effi-impl}

The APCG methods we presented in Section~\ref{sec:apcg} all need to perform 
full-dimensional vector operations at each iteration.
In particular, $y^{(k)}$ is updated as a convex combination of 
$x^{(k)}$ and $z^{(k)}$, and this can be very costly since in general 
they are dense vectors in $\reals^N$.
Moreover, in the strongly convex case 
(Algorithms~\ref{alg:apcg} and~\ref{alg:apcg-mu}), 
all blocks of $z^{(k+1)}$ also need to be updated at each iteration, 
although only the $i_k$th block needs to compute the partial gradient 
and perform an proximal mapping of $\Psi_{i_k}$.
These full-dimensional vector updates cost $O(N)$ operations per iteration
and may cause the overall computational cost of APCG to be comparable or 
even higher than the full gradient methods 
(see discussions in \cite{Nesterov12rcdm}).

In order to avoid full-dimensional vector operations, 
Lee and Sidford \cite{LeeSidford13} proposed a change of variables scheme
for accelerated coordinated gradient methods for unconstrained smooth 
minimization. 
Fercoq and Richt\'arik \cite{FercoqRichtarik13} devised a similar scheme
for efficient implementation in the non-strongly convex case ($\mu=0$) 
for composite minimization.
Here we show that full vector operations can also be avoided in the 
strongly convex case for minimizing composite functions.
For simplicity, we only present an efficient implementation of the
simplified APCG method with $\mu>0$ (Algorithm~\ref{alg:apcg-mu}),
which is given as Algorithm~\ref{alg:apcg-mu-effi}.

\begin{algorithm}[t]
\caption{Efficient implementation of APCG with $\gamma_0=\mu>0$}
\label{alg:apcg-mu-effi}
\textbf{input:} $x^{(0)}\in\dom(\Psi)$ and convexity parameter $\mu>0$.\\[0.5ex]
\textbf{initialize:} 
set $\alpha=\frac{\sqrt{\mu}}{n}$ and $\rho=\frac{1-\alpha}{1+\alpha}$,
and initialize $u^{(0)}=0$ and $v^{(0)}=x^{(0)}$.\\[1ex]
\textbf{iterate:} repeat for $k=0,1,2,\ldots$ 
\begin{enumerate} \itemsep 0pt 
\item Choose $i_k\in\{1,\ldots,n\}$ uniformly at random and compute
\[
    h^{(k)}_{i_k} = \argmin_{h\in\reals^{N_{i_k}}} 
   \left\{ \frac{n\alpha L_{i_k}}{2} \|h \|_2^2 
   + \left\langle \nabla_{\!i_k} f(\rho^{k+1}u^{(k)}\!+\!v^{(k)}),\, h\right\rangle 
    + \Psi_{i_k}\Bigl(-\rho^{k+1}u^{(k)}_{i_k}\!+\!v^{(k)}_{i_k} \!+\! h\Bigr)\right\}.
\vspace{-1ex}
\]
\item Let $u^{(k+1)} = u^{(k)}$ and $v^{(k+1)} = v^{(k)}$, and update
%\begin{eqnarray}
%    u^{(k+1)}_{i_k} &=& u^{(k)}_{i_k} - \textstyle\frac{1-n\alpha}{2\rho^{(k)}} h^{(k)}_{i_k} \label{uk}\\
%    v^{(k+1)}_{i_k} &=& v^{(k)}_{i_k} + \textstyle\frac{1+n\alpha}{2} h^{(k)}_{i_k} \label{vk}
%\end{eqnarray}
\begin{equation}\label{ukvk}
u^{(k+1)}_{i_k} = u^{(k)}_{i_k} - \frac{1-n\alpha}{2\rho^{k+1}} h^{(k)}_{i_k}, \qquad
v^{(k+1)}_{i_k} = v^{(k)}_{i_k} + \frac{1+n\alpha}{2} h^{(k)}_{i_k}.
\vspace{-1ex}
\end{equation}
%\begin{equation}\label{ukvk}
%u^{(k+1)}_i = \left\{\begin{array}{ll}
%    u^{(k)}_{i_k} - \textstyle\frac{1-n\alpha}{2\rho^{(k)}} h^{(k)}_{i_k} & \mathrm{if}~i=i_k \\
%    u^{(k)}_i & \mathrm{if}~i\neq i_k 
%\end{array} \right.
%\qquad
%v^{(k+1)}_i = \left\{\begin{array}{ll}
%    v^{(k)}_{i_k} + \textstyle\frac{1+n\alpha}{2} h^{(k)}_{i_k} & \mathrm{if}~i=i_k\\
%    v^{(k)}_i & \mathrm{if}~ i\neq i_k
%\end{array}\right.
%\end{equation}
\end{enumerate}
\textbf{output:} $x^{(k+1)} = \rho^{k+1} u^{(k+1)} + v^{(k+1)}$
\end{algorithm}

%The following Proposition establishes the equivalence between 
%Algorithm~\ref{alg:apcg-mu} and Algorithm~\ref{alg:apcg-mu-effi}.

\begin{proposition}\label{prop:equiv}
    The iterates of Algorithm~\ref{alg:apcg-mu} and Algorithm~\ref{alg:apcg-mu-effi} satisfy the following relationships:
\begin{eqnarray} 
    x^{(k)} &=& \rho^k u^{(k)} + v^{(k)}, \nonumber \\
    y^{(k)} &=& \rho^{k+1} u^{(k)} + v^{(k)}, \label{eqn:equiv} \\
    z^{(k)} &=& -\rho^k u^{(k)} + v^{(k)} , \nonumber
\end{eqnarray}
for all $k\geq 0$.
That is, these two algorithms are equivalent.
\end{proposition}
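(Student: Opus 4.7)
The plan is to prove the three identities in~\eqref{eqn:equiv} by induction on $k$. For the base case $k=0$, since $u^{(0)}=0$ and $v^{(0)}=x^{(0)}=z^{(0)}$, the first and third identities are immediate, and $y^{(0)}=\frac{x^{(0)}+\alpha z^{(0)}}{1+\alpha}=x^{(0)}=\rho \cdot 0 + v^{(0)}$ gives the second.

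For the inductive step, first I would use the identities for $x^{(k)}$ and $z^{(k)}$ together with the definition $y^{(k)}=\frac{x^{(k)}+\alpha z^{(k)}}{1+\alpha}$ and the algebraic fact
\[
(1-\alpha)(-1)+\alpha\rho = -\frac{1-\alpha}{1+\alpha} = -\rho
\]
to check the second identity at step $k$, and to derive the key intermediate identity
\[
(1-\alpha)z^{(k)}+\alpha y^{(k)} = -\rho^{k+1}u^{(k)}+v^{(k)}.
\]
Substituting this into the subproblem defining $z^{(k+1)}$ in Algorithm~\ref{alg:apcg-mu} and matching it to the subproblem in Algorithm~\ref{alg:apcg-mu-effi} under the change of variable $h = x_{i_k}-[(1-\alpha)z^{(k)}_{i_k}+\alpha y^{(k)}_{i_k}]$ shows that $h^{(k)}_{i_k}$ produced by Algorithm~\ref{alg:apcg-mu-effi} satisfies
\[
z^{(k+1)}_{i_k} = (1-\alpha)z^{(k)}_{i_k}+\alpha y^{(k)}_{i_k}+h^{(k)}_{i_k},
\]
while $z^{(k+1)}_i=(1-\alpha)z^{(k)}_i+\alpha y^{(k)}_i$ for $i\neq i_k$.

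Next, I would verify identities one and three at step $k+1$ block-by-block. For $i\neq i_k$, the updates in \eqref{ukvk} leave $u$ and $v$ untouched, so the identity for $z^{(k+1)}_i$ follows from the intermediate identity above, and $x^{(k+1)}_i=y^{(k)}_i=\rho^{k+1}u^{(k)}_i+v^{(k)}_i$ is the $x$-identity (using \eqref{eqn:x-update}). For $i=i_k$, I would plug \eqref{ukvk} in directly; the coefficients $\tfrac{1-n\alpha}{2\rho^{k+1}}$ and $\tfrac{1+n\alpha}{2}$ are tuned so that
\[
-\rho^{k+1}u^{(k+1)}_{i_k}+v^{(k+1)}_{i_k} = -\rho^{k+1}u^{(k)}_{i_k}+v^{(k)}_{i_k}+\tfrac{(1-n\alpha)+(1+n\alpha)}{2}h^{(k)}_{i_k}
\]
equals $z^{(k+1)}_{i_k}$, and similarly
\[
\rho^{k+1}u^{(k+1)}_{i_k}+v^{(k+1)}_{i_k} = \rho^{k+1}u^{(k)}_{i_k}+v^{(k)}_{i_k}+\tfrac{-(1-n\alpha)+(1+n\alpha)}{2}h^{(k)}_{i_k} = y^{(k)}_{i_k}+n\alpha\, h^{(k)}_{i_k},
\]
which must be shown equal to $x^{(k+1)}_{i_k}$. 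The latter equality I would verify by expanding \eqref{xk} on the $i_k$ block, using the just-established expression for $z^{(k+1)}_{i_k}-z^{(k)}_{i_k}=-\alpha(z^{(k)}_{i_k}-y^{(k)}_{i_k})+h^{(k)}_{i_k}$ and observing that the $n\alpha^2(z^{(k)}-y^{(k)})$ term cancels. Finally, the identity for $y^{(k+1)}$ then follows from the fresh $x^{(k+1)}$ and $z^{(k+1)}$ identities by the same $(1-\alpha)(-1)+\alpha\rho=-\rho$ calculation used in the base case.

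The main obstacle is bookkeeping: keeping straight the scaling by $\rho^{k+1}$ between $u$-coordinates and $x,y,z$-coordinates, and verifying that the two asymmetric coefficients in \eqref{ukvk} simultaneously produce the right increments for both the $x$-identity (where they should sum to $n\alpha$ after a sign flip) and the $z$-identity (where they should sum to $1$ without a sign flip). Once the intermediate equality $(1-\alpha)z^{(k)}+\alpha y^{(k)}=-\rho^{k+1}u^{(k)}+v^{(k)}$ is in hand, the rest reduces to matching the two block updates coefficient-by-coefficient.
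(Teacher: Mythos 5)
Your proposal is correct and follows essentially the same route as the paper's proof: induction on $k$, the key intermediate identity $(1-\alpha)z^{(k)}+\alpha y^{(k)}=-\rho^{k+1}u^{(k)}+v^{(k)}$ (via $(1-\alpha)-\alpha\rho=\rho$), matching the two proximal subproblems by a change of variable so that $z^{(k+1)}_{i_k}=(1-\alpha)z^{(k)}_{i_k}+\alpha y^{(k)}_{i_k}+h^{(k)}_{i_k}$, and then checking that the split of $h^{(k)}_{i_k}$ into the $\frac{1\pm n\alpha}{2}$ pieces reproduces both the $z$- and $x$-identities, with $y^{(k+1)}$ recovered at the end. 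The only cosmetic difference is that you verify the updates block-by-block while the paper writes them with full vectors using $U_{i_k}$; the substance is identical.
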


\begin{proof}
We prove by induction.
Notice that Algorithm~\ref{alg:apcg-mu} is initialized with $z^{(0)}=x^{(0)}$, 
and its first step implies $y^{(0)}=\frac{x^{(0)}+\alpha z^{(0)}}{1+\alpha}=x^{(0)}$;
Algorithm~\ref{alg:apcg-mu-effi} is initialized with $u^{(0)}=0$ and $v^{(0)}=x^{(0)}$.
Therefore we have
\[
    x^{(0)} = \rho^0 u^{(0)} + v^{(0)}, \qquad
    y^{(0)} = \rho^1 u^{(0)} + v^{(0)}, \qquad
    z^{(0)} = - \rho^0 u^{(0)} + v^{(0)},
\]
which means that~\eqref{eqn:equiv} holds for $k=0$.
Now suppose that it holds for some $k\geq0$, then
\begin{eqnarray}
    (1-\alpha)z^{(k)} + \alpha y^{(k)} 
&=& (1-\alpha)\left(-\rho^k u^{(k)} + v^{(k)}\right) + \alpha\left(\rho^{k+1} u^{(k)} + v^{(k)}\right) \nonumber \\
&=& -\rho^k \left((1-\alpha) - \alpha\rho \right) u^{(k)} + (1-\alpha)v^{(k)} + \alpha v^{(k)} \nonumber \\
&=& -\rho^{k+1} u^{(k)} + v^{(k)} . \label{eqn:combine-zkyk}
\end{eqnarray}
So $h^{(k)}_{i_k}$ in Algorithm~\ref{alg:apcg-mu-effi} can be written as
\[
    h^{(k)}_{i_k} = \argmin_{h\in\reals^{N_{i_k}}} 
    \left\{ \frac{n\alpha L_{i_k}}{2} \|h \|_2^2 
    + \langle \nabla_{i_k} f(y^{(k)}), h\rangle 
    + \Psi_{i_k}\left((1-\alpha)z^{(k)}_{i_k} +\alpha y^{(k)}_{i_k} + h\right) \right\} .
\]
Comparing with~\eqref{eqn:full-z-update}, and using $\beta_k=\alpha$, we obtain
\[
    h^{(k)}_{i_k} = \tilde z^{(k+1)}_{i_k} - \bigl( (1-\alpha)z^{(k)}_{i_k} +\alpha y^{(k)}_{i_k} \bigr).
\]
In terms of the full dimensional vectors, 
using~\eqref{eqn:z-update} and~\eqref{eqn:combine-zkyk}, we have
\begin{eqnarray*}
    z^{(k+1)} 
&=& (1-\alpha)z^{(k)} + \alpha y^{(k)} + U_{i_k}h^{(k)}_{i_k} \\
&=& -\rho^{k+1}u^{(k)} + v^{(k)} + U_{i_k}h^{(k)}_{i_k} \\
&=& -\rho^{k+1}u^{(k)} + v^{(k)} + \frac{1-n\alpha}{2} U_{i_k}h^{(k)}_{i_k} 
    + \frac{1+n\alpha}{2} U_{i_k}h^{(k)}_{i_k} \\
&=& -\rho^{k+1}\left(u^{(k)}-\frac{1-n\alpha}{2\rho^{k+1}} U_{i_k}h^{(k)}_{i_k}\right)
    +\left( v^{(k)} + \frac{1+n\alpha}{2} U_{i_k} h^{(k)}_{i_k} \right) \\
&=& -\rho^{k+1} u^{(k+1)} + v^{(k+1)} .
\end{eqnarray*}
Using Step~3 of Algorithm~\ref{alg:apcg-mu}, we get
\begin{eqnarray*}
    x^{(k+1)}
&=& y^{(k)} + n\alpha(z^{(k+1)}-z^{(k)})+n\alpha^2(z^{(k)}-y^{(k)}) \\
&=& y^{(k)} + n\alpha\left(z^{(k+1)} - \bigl((1-\alpha)z^{(k)}+\alpha y^{(k)}\bigr)\right)\\
&=& y^{(k)} + n\alpha U_{i_k}h^{(k)}_{i_k} ,
\end{eqnarray*}
where the last step used~\eqref{eqn:z-update}.
Now using the induction hypothesis $y^{(k)}=\rho^{k+1}u^{(k)}+v^{(k)}$, we have
\begin{eqnarray*}
    x^{(k+1)}
&=& \rho^{k+1} u^{(k)} + v^{(k)} +  \frac{1-n\alpha}{2} U_{i_k}h^{(k)}_{i_k} 
    + \frac{1+n\alpha}{2} U_{i_k}h^{(k)}_{i_k} \\
&=& \rho^{k+1} \left(u^{(k)}-\frac{1-n\alpha}{2\rho^{k+1}} U_{i_k}h^{(k)}_{i_k}\right)
    +\left( v^{(k)} + \frac{1+n\alpha}{2} U_{i_k} h^{(k)}_{i_k} \right) \\
&=& \rho^{k+1} u^{(k+1)} + v^{(k+1)} .
\end{eqnarray*}
Finally,
\begin{eqnarray*}
    y^{(k+1)}
&=&  \frac{1}{1+\alpha}\left(x^{(k+1)} + \alpha z^{(k+1)}\right)\\
&=&  \frac{1}{1+\alpha}\left(\rho^{k+1}u^{(k+1)}+v^{(k+1)}\right)
    +\frac{\alpha}{1+\alpha}\left(-\rho^{k+1}u^{(k+1)}+v^{(k+1)}\right)\\
&=&  \frac{1-\alpha}{1+\alpha}\rho^{k+1}u^{(k+1)}
    +\frac{1+\alpha}{1+\alpha}v^{(k+1)} \\
&=& \rho^{k+2}u^{(k+1)} + v^{(k+1)} .
\end{eqnarray*}
We just showed that~\eqref{eqn:equiv} also holds for $k+1$.
This finishes the induction.
\end{proof}

We note that in Algorithm~\ref{alg:apcg-mu-effi}, 
only a single block coordinates of the vectors $u^{(k)}$ and $v^{(k)}$ 
are updated at each iteration, which cost $O(N_{i_k})$.
However, computing the partial gradient 
$\nabla_{i_k}f(\rho^{k+1}u^{(k)}+v^{(k)})$ may still cost $O(N)$ in general.
In Section~\ref{sec:erm-impl}, 
we show how to further exploit problem structure in 
regularized empirical risk minimization 
to completely avoid full-dimensional vector operations.

\section{Application to regularized empirical risk minimization (ERM)}
\label{sec:erm}

In this section, we show how to apply the APCG method to solve
the regularized ERM problems associated with linear predictors.

Let $A_1, \ldots, A_n$ be vectors in $\reals^d$, $\phi_1$, \ldots, 
$\phi_n$ be a sequence of convex functions defined on~$\reals$,
and~$g$ be a convex function defined on $\reals^d$.
The goal of regularized ERM with linear predictors 
is to solve the following (convex) optimization problem:
\begin{equation}\label{eqn:erm-primal}
\minimize_{w\in\reals^d} 
~\left\{ P(w) ~\eqdef~ \frac{1}{n}\sum_{i=1}^n \phi_i(A_i^T w) + \lambda g(w) 
\right\}, 
\end{equation}
where $\lambda>0$ is a regularization parameter.
For binary classification, 
given a label $b_i\in\{\pm 1\}$ for each vector $A_i$, 
for $i=1,\ldots,n$, we obtain the linear SVM (support vector machine)
problem by setting
$\phi_i(z)=\max\{0, 1-b_i z\}$ and $g(w) = (1/2)\|w\|_2^2$.
Regularized logistic regression is obtained by setting 
$\phi_i(z)=\log(1+\exp(-b_i z))$. 
This formulation also includes regression problems.
For example, ridge regression is obtained by setting 
$\phi_i(z)=(1/2)(z-b_i)^2$ and $g(w) = (1/2)\|w\|_2^2$,
and we get the Lasso if $g(w)=\|w\|_1$.
Our method can also be extended to cases where each $A_i$ is a matrix, 
thus covering multiclass classification problems as well
(see, e.g., \cite{SSZhang13acclSDCA}).

For each $i=1,\ldots,n$, let $\phi_i^*$ be the convex conjugate of $\phi_i$, 
that is,
\[
    \phi_i^*(u)=\max_{z\in\reals} ~\{ z u - \phi_i(z)\}.
\]
The dual of the regularized ERM problem~\eqref{eqn:erm-primal},
which we call the primal, is to solve the problem 
(see, e.g., \cite{SSZhang13SDCA})
\begin{equation}\label{eqn:erm-dual}
    \maximize_{x\in\reals^n} 
    ~\left\{ D(x) ~\eqdef~ \frac{1}{n}\sum_{i=1}^n -\phi_i^*(-x_i)
    -\lambda g^*\left(\frac{1}{\lambda n} A x\right) \right\},
\end{equation}
where $A=[A_1,\ldots,A_n]$.
This is equivalent to minimize $F(x)\eqdef -D(x)$, that is,
\begin{equation}\label{eqn:erm-composite}
    \minimize_{x\in\reals^n} ~
    \left\{ F(x) ~\eqdef~
    \frac{1}{n}\sum_{i=1}^n \phi_i^*(-x_i)
    +\lambda g^*\left(\frac{1}{\lambda n} A x\right) 
\right\}.
\end{equation}
The structure of $F(x)$ above matches our general formulation of minimizing 
composite convex functions in~\eqref{eqn:composite-min} and~\eqref{eqn:Psi} 
with
\begin{equation}\label{eqn:simple-split}
f(x)=\lambda g^*\left( \frac{1}{\lambda n} A x \right),
    \qquad
    \Psi(x)=\frac{1}{n}\sum_{i=1}^n \phi_i^*(-x_i).
\end{equation}
Therefore, we can directly apply the APCG method to solve the 
problem~\eqref{eqn:erm-composite}, i.e., 
to solve the dual of the regularized ERM problem.
Here we assume that the proximal mappings of the conjugate functions $\phi_i^*$ 
can be computed efficiently, which is indeed the case for many regularized
ERM problems (see, e.g., \cite{SSZhang13SDCA,SSZhang13acclSDCA}).

In order to obtain accelerated linear convergence rates, we make the following
assumption.

\begin{assumption}\label{asmp:erm}
Each function $\phi_i$ is $1/\gamma$ smooth, and the function $g$ 
has unit convexity parameter~1.
\end{assumption}

Here we slightly abuse the notation by overloading~$\gamma$ and~$\lambda$, 
which appeared in Sections~\ref{sec:apcg} and~\ref{sec:analysis}.
In this section $\gamma$ represents the (inverse) smoothness parameter
of $\phi_i$, and~$\lambda$ denotes the regularization parameter on~$g$.
Assumption~\ref{asmp:erm} implies that each $\phi_i^*$ has strong
convexity parameter $\gamma$ (with respect to the local Euclidean norm)
and $g^*$ is differentiable and $\nabla g^*$ has Lipschitz constant~1.

In order to match the condition in Assumption~\ref{asmp:strong-convex},
i.e., $f(x)$ needs to be strongly convex, we can apply the technique 
in Section~\ref{sec:strong-psi} to relocate the strong convexity
from~$\Psi$ to~$f$.
Without loss of generality, we can use the following splitting
of the composite function $F(x)=f(x)+\Psi(x)$:
\begin{equation}\label{eqn:splitting}
f(x) = \lambda g^*\left(\frac{1}{\lambda n}A x\right)+\frac{\gamma}{2n}\|x\|_2^2,
\qquad
\Psi(x) = \frac{1}{n}\sum_{i=1}^n \left(\phi^*(-x_i)-\frac{\gamma}{2}\|x_i\|_2^2
\right).
\end{equation}
Under Assumption~\ref{asmp:erm}, the function $f$ is smooth and 
strongly convex and each $\Psi_i$, for $i=1,\ldots,n$, is still convex.
As a result, we have the following complexity guarantee 
when applying the APCG method to minimize the function $F(x)=-D(x)$.

\begin{theorem}\label{thm:dual-erm}
Suppose Assumption~\ref{asmp:erm} holds and $\|A_i\|_2\leq R$ 
for all $i=1,\ldots,n$.
In order to obtain an expected dual optimality gap
$\E[D^\star-D(x^{(k)})] \leq \epsilon$ using the APCG method,
it suffices to have
\begin{equation}\label{eqn:erm-complexity}
k \geq \left(n+\sqrt{\frac{n R^2}{\lambda\gamma}}\right)
\log(C/\epsilon).
\end{equation}
where $D^\star=\max_{x\in\reals^n} D(x)$ and 
%the constant
\begin{equation}\label{eqn:C-def}
    C = D^\star-D(x^{(0)})+\frac{\gamma}{2n} \|x^{(0)}-x^\star\|_2^2.
\end{equation}
\end{theorem}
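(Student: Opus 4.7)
The plan is to specialize Theorem~\ref{thm:apcg-rate} to the composite form~\eqref{eqn:erm-composite} under the strong-convexity-preserving splitting~\eqref{eqn:splitting}. The bulk of the work will be to compute the resulting block-Lipschitz constants $L_i$ and the strong-convexity parameter $\mu$ of $f$ with respect to the weighted norm $\|\cdot\|_L$ in closed form, and then turn the expected rate into an iteration count.

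First I would compute $\nabla_i f(x) = \tfrac{1}{n}A_i^T\nabla g^*\!\bigl(Ax/(\lambda n)\bigr) + \tfrac{\gamma}{n}x_i$, and use the fact that $\nabla g^*$ is $1$-Lipschitz (from the unit convexity of $g$) together with $\|A_i\|_2\le R$ to derive the uniform block-Lipschitz constant
\[
L_i \;\le\; \frac{R^2}{\lambda n^2}+\frac{\gamma}{n} \;\eqdef\; L_{\max}.
\]
The quadratic term $\frac{\gamma}{2n}\|x\|_2^2$ makes $f$ have convexity parameter $\gamma/n$ with respect to $\|\cdot\|_2$, and combining this with the elementary bound $\|x\|_L^2\le L_{\max}\|x\|_2^2$ would give that $f$ satisfies Assumption~\ref{asmp:strong-convex} with
\[
\mu \;=\; \frac{\gamma/n}{L_{\max}} \;=\; \frac{\gamma\lambda n}{R^2+\gamma\lambda n}.
\]

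With these constants in hand I would invoke the accelerated linear bound of Theorem~\ref{thm:apcg-rate} (initialized with $\gamma_0=\mu$) in the form
\[
\bE_{\xi_{k-1}}\!\bigl[D^\star-D(x^{(k)})\bigr] \;\le\; \Bigl(1-\tfrac{\sqrt{\mu}}{n}\Bigr)^k\Bigl(D^\star-D(x^{(0)})+\tfrac{\mu}{2}R_0^2\Bigr),
\]
after identifying $F=-D$ and $F^\star=-D^\star$. Using the identity $\mu L_{\max}=\gamma/n$ to bound
\[
\tfrac{\mu}{2}R_0^2 \;\le\; \tfrac{\mu L_{\max}}{2}\|x^{(0)}-x^\star\|_2^2 \;=\; \tfrac{\gamma}{2n}\|x^{(0)}-x^\star\|_2^2
\]
shows that the prefactor is at most the constant $C$ from~\eqref{eqn:C-def}.

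Finally I would turn the linear rate into an iteration complexity by applying $-\log(1-t)\ge t$ and simplifying
\[
\frac{n}{\sqrt{\mu}} \;=\; \sqrt{n^2+\frac{nR^2}{\lambda\gamma}} \;\le\; n+\sqrt{\frac{nR^2}{\lambda\gamma}}
\]
via subadditivity of the square root, yielding exactly~\eqref{eqn:erm-complexity}. The only point requiring any care is the interplay between the two norms $\|\cdot\|_2$ and $\|\cdot\|_L$: the Euclidean strong convexity of $f$ must be translated into the $\|\cdot\|_L$-form demanded by Assumption~\ref{asmp:strong-convex}, and the initial distance $R_0$ measured in $\|\cdot\|_L$ must be translated back to a Euclidean quantity for the constant $C$. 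I do not anticipate any substantive obstacle beyond this bookkeeping; all the analytic content sits in Theorem~\ref{thm:apcg-rate}.
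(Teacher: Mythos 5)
Your proposal is correct and follows essentially the same route as the paper's proof: compute $L_i \le \frac{R^2}{\lambda n^2}+\frac{\gamma}{n}$ via the $1$-Lipschitzness of $\nabla g^*$, convert the Euclidean strong convexity $\gamma/n$ into $\mu = \frac{\lambda\gamma n}{R^2+\lambda\gamma n}$ with respect to $\|\cdot\|_L$, invoke the linear rate of Theorem~\ref{thm:apcg-rate}, and bound $n/\sqrt{\mu} = \sqrt{n^2 + nR^2/(\lambda\gamma)} \le n + \sqrt{nR^2/(\lambda\gamma)}$. Your explicit handling of the norm translation in the constant ($\mu L_{\max}\|\cdot\|_2^2$ versus $\|\cdot\|_L^2$) is the same bookkeeping the paper performs implicitly when stating $C$ in Euclidean form.
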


\begin{proof}
First, we notice that the function $f(x)$ defined in~\eqref{eqn:splitting}
is differentiable. Moreover, for any $x\in\reals^n$ and $h_i\in\reals$,
\begin{eqnarray*}
   \|\nabla_i f(x+U_i h_i)-\nabla_i f(x)\|_2
   &=& \left\|\frac{1}{n} A_i^T \left[\nabla g^*\left(\frac{1}{\lambda n} A(x+U_i h_i)\right) - \nabla g^*\left(\frac{1}{\lambda n} A x\right) \right] + 
   \frac{\gamma}{n}h_i \right\|_2 \\
   &\leq& \frac{\|A_i\|_2}{n} \left\|\nabla g^*\left(\frac{1}{\lambda n} A(x+U_i h_i)\right) - \nabla g^*\left(\frac{1}{\lambda n} A x\right) \right\|_2 + 
   \frac{\gamma}{n} \|h_i\|_2 \\
   &\leq& \frac{\|A_i\|_2}{n} \left\|\frac{1}{\lambda n} A_i h_i \right\|_2 + 
   \frac{\gamma}{n} \|h_i\|_2 \\
   &\leq& \left( \frac{\|A_i\|_2^2}{\lambda n^2} + \frac{\gamma}{n}\right)\|h_i\|_2,
\end{eqnarray*}
where the second inequality used the assumption that~$g$ 
has convexity parameter~$1$ and thus $\nabla g^*$ has Lipschitz constant~$1$.
%is differentiable and its gradient $\nabla f(x)$ has coordinate-wise
The coordinate-wise Lipschitz constants as defined in 
Assumption~\ref{asmp:coord-smooth} are
\[
L_i~=~\frac{\|A_i\|_2^2}{\lambda n^2} + \frac{\gamma}{n} 
~\leq~ \frac{R^2+\lambda\gamma n}{\lambda n^2} ,
\qquad i=1,\ldots,n.
\]
The function~$f$ has convexity parameter $\frac{\gamma}{n}$ 
with respect to the Euclidean norm $\|\cdot\|_2$.
Let~$\mu$ be its convexity parameter with respect to the norm
$\|\cdot\|_L$ defined in~\eqref{eqn:L-norm}.
Then
\[
    \mu ~\geq~ \frac{\gamma}{n} \Big/ \frac{R^2+\lambda\gamma n}{\lambda n^2}
    ~=~ \frac{\lambda \gamma n}{R^2+\lambda \gamma n} .
\]
According to Theorem~\ref{thm:apcg-rate}, the APCG method converges 
geometrically:
\[
\E\left[D^\star-D(x^{(k)})\right]
~\leq~ \left(1-\frac{\sqrt{\mu}}{n}\right)^k  C
~\leq~ \exp\left(-\frac{\sqrt{\mu}}{n} k \right) C ,
\]
where the constant~$C$ is given in~\eqref{eqn:C-def}.
Therefore, in order to obtain $\E[D^\star-D(x^{(k)})] \leq \epsilon$,
it suffices to have the number of iterations~$k$ to be larger than
\[
    \frac{n}{\sqrt{\mu}} \log(C/\epsilon) 
    ~\leq~ n\sqrt{\frac{R^2+\lambda \gamma n}{\lambda \gamma n}} \log(C/\epsilon)
    ~=~ \sqrt{n^2 + \frac{n R^2}{\lambda \gamma}} \log(C/\epsilon) 
    ~\leq~ \left(n +\sqrt{\frac{n R^2}{\lambda\gamma}}\right) \log(C/\epsilon).
\]
This finishes the proof.
\end{proof}

Let us compare the result in Theorem~\ref{thm:dual-erm} with the complexity
of solving the dual problem~\eqref{eqn:erm-composite} using the accelerated 
full gradient (AFG) method of Nesterov \cite{Nesterov13composite}.
Using the splitting in~\eqref{eqn:simple-split} and under 
Assumption~\ref{asmp:erm}, the gradient $\nabla f(x)$ has Lipschitz
constant $\frac{\|A\|_2^2}{\lambda n^2}$, where $\|A\|_2$ denotes the spectral
norm of~$A$, and $\Psi(x)$
has convexity parameter $\frac{\gamma}{n}$ with respect to $\|\cdot\|_2$.
So the condition number of the problem is
\[
    \kappa =\frac{\|A\|_2^2}{\lambda n^2} \bigg/ \frac{\gamma}{n}
    =\frac{\|A\|_2^2}{\lambda\gamma n}.
\]
Suppose each iteration of the AFG method costs as much as~$n$ times
of the APCG method (as we will see in Section~\ref{sec:erm-impl}),
then the complexity of the AFG method \cite[Theorem~6]{Nesterov13composite}
measured in terms of number of coordinate gradient steps is
\[
  O\left(n \sqrt{\kappa}\log(1/\epsilon)\right)
  ~=~ O\left(\sqrt{\frac{n \|A\|_2^2}{\lambda\gamma}}\log(1/\epsilon)\right)
  ~\leq~ O\left( \sqrt{\frac{n^2 R^2}{\lambda\gamma}}\log(1/\epsilon) \right).
\]
The inequality above is due to $\|A\|_2^2\leq\|A\|_F^2\leq n R^2$.
%where $R^2=\max_{i\in\{1,\ldots,n\}}\|A_i\|^2$.
Therefore in the ill-conditioned case 
(assuming $n\leq\frac{R^2}{\lambda\gamma}$),
the complexity of AFG can be a factor of~$\sqrt{n}$ worse than that of APCG.

Several state-of-the-art algorithms for regularized ERM, 
including SDCA \cite{SSZhang13SDCA},
SAG \cite{LeRouxSchmidtBach12,SchmidtLeRouxBach13} and
SVRG \cite{JohnsonZhang13,XiaoZhang14},
have the iteration complexity 
\[
O\left(\left(n +\frac{R^2}{\lambda\gamma}\right) \log(1/\epsilon)\right).
\]
Here the ratio $\frac{R^2}{\lambda\gamma}$ can be interpreted as the condition
number of the regularized ERM problem~\eqref{eqn:erm-primal} 
and its dual~\eqref{eqn:erm-dual}.
We note that our result in~\eqref{eqn:erm-complexity} can be much better 
for ill-conditioned problems, i.e., when the condition number 
$\frac{R^2}{\lambda \gamma}$ is much larger than~$n$.

%This is also confirmed by our numerical experiments in Section~\ref{sec:experiments}.

Most recently, Shalev-Shwartz and Zhang \cite{SSZhang13acclSDCA}
developed an accelerated SDCA method which achieves the same complexity
$O\left(\left(n +\sqrt{\frac{n}{\lambda\gamma}}\right) \log(1/\epsilon)\right)$
as our method.
Their method is an inner-outer iteration procedure, where the outer loop
is a full-dimensional accelerated gradient method 
in the primal space $w\in\reals^d$.
At each iteration of the outer loop, the SDCA method \cite{SSZhang13SDCA}
is called to solve the dual problem~\eqref{eqn:erm-dual} with 
customized regularization parameter and precision.
In contrast, our APCG method is a straightforward single loop 
coordinate gradient method.

We note that the complexity bound for the aforementioned work are either
for the primal optimality $P(w^{(k)})-P^\star$ (SAG and SVRG) 
or for the primal-dual gap $P(w^{(k)})-D(x^{(k)})$ (SDCA and accelerated SDCA).
Our results in Theorem~\ref{thm:dual-erm} are in terms of the 
dual optimality $D^\star-D(x^{(k)})$.
In Section~\ref{sec:erm-primal}, we show how to recover primal solutions with
the same order of convergence rate.
In Section~\ref{sec:erm-impl}, we show how to exploit problem structure
of regularized ERM to compute the partial gradient $\nabla_i f(x)$, 
which together with the efficient implementation proposed 
in Section~\ref{sec:effi-impl}, completely avoid full-dimensional 
vector operations.
The experiments in Section~\ref{sec:experiments} illustrate that our method
has superior performance in reducing both the primal objective value
and the primal-dual gap.

\subsection{Recovering the primal solution}
\label{sec:erm-primal}

Under Assumption~\ref{asmp:erm}, the primal problem~\eqref{eqn:erm-primal}
and dual problem~\eqref{eqn:erm-dual} each has a unique solution, 
say $w^\star$ and $x^\star$, respectively.
Moreover, we have $P(w^\star)=D(x^\star)$.
With the definition
\begin{equation}\label{eqn:omega}
    \omega(x) = \nabla g^*\left(\frac{1}{\lambda n} A x\right),
\end{equation}
we have $w^\star=\omega(x^\star)$.
When applying the APCG method to solve the dual regularized ERM problem, 
which generate a dual sequence $x^{(k)}$, 
we can obtain a primal sequence $w^{(k)} = \omega(x^{(k)})$.
Here we discuss the relationship between the primal-dual gap 
$P(w^{(k)})-D(x^{(k)})$ and the dual optimality $D^\star-D(x^{(k)})$.

Let $a=(a_1,\ldots,a_n)$ be a vector in $\reals^n$.
We consider the saddle-point problem
\begin{equation}\label{spp}
\max_{x}~\min_{a,w}~\left\{ \Phi(x,a,w)~\eqdef~
\frac{1}{n}\sum_{i=1}^n\phi_i(a_i)+\lambda g(w)-\frac{1}{n}\sum_{i=1}^n
x_i(A_i^T w-a_i) \right\},
\end{equation}
so that
\begin{eqnarray*}
D(x)~=~\min_{a, w} ~\Phi(x,a, w).
\end{eqnarray*}
Given an approximate dual solution $x^{(k)}$ (generated by the APCG method),
we can find a pair of primal solutions 
$(a^{(k)},w^{(k)})=\argmin_{a,w}\Phi(x^{(k)},a,w)$, 
or more specifically,
\begin{eqnarray}
\label{pfromd1}
a^{(k)}_i &=& \arg\max_{a_i} \left\{ -x^{(k)}_i a_i-\phi_i(a_i) \right\}
~\in~ \partial\phi_i^*(-x_i^{(k)}),
\quad i=1,\ldots,n, \\
\label{pfromd2}
w^{(k)} &=& \arg\max_{w} \left\{ w^T\left(\frac{1}{\lambda n}Ax^{(k)}\right)
-g(w)\right\}
~=~ \nabla g^*\left(\frac{1}{\lambda n} A x^{(k)} \right) .
\end{eqnarray}
%Notice that $a^{(k)}_i \in \partial\phi_i^*(-x_i^{(k)})$ for $i=1,\ldots,n$,
%and $w^{(k)}\in \partial g^*\left(\frac{1}{\lambda n} A x^{(k)} \right)$.
As a result, we obtain a subgradient of~$D$ at $x^{(k)}$, 
denoted $D'(x^{(k)})$, and
\begin{eqnarray}
\label{dualgrad}
\|D'(x^{(k)})\|_2^2=\frac{1}{n^2}\sum_{i=1}^n
\left(A_i^T w^{(k)}-a^{(k)}_i\right)^2.
\end{eqnarray}
We note that $\|D'(x^{(k)})\|_2^2$ is not only a measure of the dual optimality 
of $x^{(k)}$, 
but also a measure of the primal feasibility of $(a^{(k)},w^{(k)})$.
In fact, it can also bound the primal-dual gap, 
which is the result of the following lemma.

\begin{lemma}
\label{pdgbound}
Given any dual solution $x^{(k)}$, let $(a^{(k)},w^{(k)})$ be defined 
as in~\eqref{pfromd1} and~\eqref{pfromd2}. Then
\[
P(w^{(k)})-D(x^{(k)}) ~\leq~
\frac{1}{2n\gamma}\sum_{i=1}^n \left(A_i^T w^{(k)}-a^{(k)}_i\right)^2
~=~\frac{n}{2\gamma}\|D'(x^{(k)})\|_2^2.
\]
\end{lemma}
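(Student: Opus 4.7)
The plan is to express both $P(w^{(k)})$ and $D(x^{(k)})$ in terms of $\Phi$ and show that the difference collapses to a sum of quadratic terms via the smoothness of each $\phi_i$.

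First I would use the fact that $(a^{(k)}, w^{(k)})$ is the inner minimizer in the saddle-point formulation (\ref{spp}) at $x = x^{(k)}$, so
\[
D(x^{(k)}) ~=~ \Phi(x^{(k)}, a^{(k)}, w^{(k)}) ~=~ \frac{1}{n}\sum_{i=1}^n \phi_i(a^{(k)}_i) + \lambda g(w^{(k)}) - \frac{1}{n}\sum_{i=1}^n x^{(k)}_i (A_i^T w^{(k)} - a^{(k)}_i).
\]
Subtracting this from $P(w^{(k)}) = \frac{1}{n}\sum_i \phi_i(A_i^T w^{(k)}) + \lambda g(w^{(k)})$ cancels the $\lambda g(w^{(k)})$ term, and yields
\[
P(w^{(k)}) - D(x^{(k)}) ~=~ \frac{1}{n}\sum_{i=1}^n \bigl[\phi_i(A_i^T w^{(k)}) - \phi_i(a^{(k)}_i) + x^{(k)}_i(A_i^T w^{(k)} - a^{(k)}_i)\bigr].
\]

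Next I would invoke the Fenchel duality relations baked into (\ref{pfromd1}): $a_i^{(k)} \in \partial \phi_i^*(-x_i^{(k)})$ is equivalent to $-x_i^{(k)} \in \partial \phi_i(a_i^{(k)})$, and since $\phi_i$ is $1/\gamma$-smooth under Assumption~\ref{asmp:erm} this subgradient is unique, namely $\phi_i'(a^{(k)}_i) = -x^{(k)}_i$. Smoothness then gives the quadratic upper bound
\[
\phi_i(A_i^T w^{(k)}) ~\leq~ \phi_i(a^{(k)}_i) + \phi_i'(a^{(k)}_i)(A_i^T w^{(k)} - a^{(k)}_i) + \frac{1}{2\gamma}(A_i^T w^{(k)} - a^{(k)}_i)^2.
\]
Substituting $\phi_i'(a^{(k)}_i) = -x^{(k)}_i$ shows that the $i$th bracketed term in the gap expression is bounded above by $\frac{1}{2\gamma}(A_i^T w^{(k)} - a^{(k)}_i)^2$.

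Summing over $i$ and dividing by $n$ yields the first claimed inequality, and the final equality is immediate from the definition (\ref{dualgrad}) of $\|D'(x^{(k)})\|_2^2$, since $\sum_{i=1}^n (A_i^T w^{(k)} - a^{(k)}_i)^2 = n^2\|D'(x^{(k)})\|_2^2$. No real obstacle is expected here; the only subtlety is the routine verification that smoothness of $\phi_i$ passes through the subdifferential identification, which is standard.
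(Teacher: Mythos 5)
Your proposal is correct and follows essentially the same argument as the paper: identify $\nabla\phi_i(a^{(k)}_i)=-x^{(k)}_i$ from~\eqref{pfromd1}, apply the $1/\gamma$-smoothness quadratic upper bound to each $\phi_i(A_i^T w^{(k)})$, and use $D(x^{(k)})=\Phi(x^{(k)},a^{(k)},w^{(k)})$, with the final equality coming from~\eqref{dualgrad}. The only difference is cosmetic—you subtract $D(x^{(k)})$ first and bound each bracketed term, whereas the paper bounds $P(w^{(k)})$ and then recognizes $\Phi$ in the result.
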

\begin{proof}
Because of \eqref{pfromd1}, we have $\nabla\phi_i(a^{(k)}_i)=-x^{(k)}_i$.
The $1/\gamma$-smoothness of $\phi_i(a)$ implies
\begin{eqnarray*}
P(w^{(k)})&=&\frac{1}{n}\sum_{i=1}^n \phi_i(A_i^T w^{(k)})+\lambda g(w^{(k)})\\
&\leq&\frac{1}{n}\sum_{i=1}^n \left(\phi_i(a^{(k)}_i)+\nabla \phi_i(a^{(k)}_i)^T\left(A_i^T w^{(k)}-a^{(k)}_i\right)+\frac{1}{2\gamma}\left(A_i^T w^{(k)}-a^{(k)}_i\right)^2\right) + \lambda g(w^{(k)})\\
&=&\frac{1}{n}\sum_{i=1}^n \left(\phi_i(a^{(k)}_i)-x^{(k)}_i\left(A_i^T w^{(k)}-a^{(k)}_i\right)+\frac{1}{2\gamma}\left(A_i^T w^{(k)}-a^{(k)}_i\right)^2\right) + \lambda g(w^{(k)})\\
&=&\Phi(x^{(k)},a^{(k)},w^{(k)})+\frac{1}{2n\gamma}\sum_{i=1}^n \left(A_i^T w^{(k)}-a^{(k)}_i\right)^2\\
&=&D(x^{(k)})+\frac{1}{2n\gamma}\sum_{i=1}^n (A_i^T w^{(k)}-a^{(k)}_i)^2,
\end{eqnarray*}
which leads to the inequality in the conclusion. 
The equality in the conclusion is due to~\eqref{dualgrad}.
\end{proof}

The following theorem states that under a stronger assumption than
Assumption~\ref{asmp:erm}, the primal-dual gap can be bounded directly
by the dual optimality gap, hence
they share the same order of convergence rate.

\begin{theorem}\label{thm:gap-by-dual}
Suppose $g$ is $1$-strongly convex and each $\phi_i$ is $1/\gamma$-smooth and 
also $1/\eta$-strongly convex (all with respect to the Euclidean norm
$\|\cdot\|_2$).
Given any dual point $x^{(k)}$, let the primal correspondence be
$w^{(k)}=\omega(x^{(k)})$, i.e., generated from~\eqref{pfromd2}. 
Then we have
\begin{equation}\label{eqn:gap-by-dual}
P(w^{(k)})-D(x^{(k)})
~\leq~ \frac{\lambda\eta n+\|A\|_2^2}{\lambda\gamma n}
\left(D^\star-D(x^{(k)})\right),
\end{equation}
where $\|A\|_2$ denotes the spectral norm of~$A$.
\end{theorem}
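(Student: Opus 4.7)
The plan is to combine Lemma~\ref{pdgbound}, which already bounds the primal-dual gap by $\frac{n}{2\gamma}\|D'(x^{(k)})\|_2^2$, with a standard smoothness inequality for $-D$ that converts $\|D'(x^{(k)})\|_2^2$ into the dual optimality gap $D^\star - D(x^{(k)})$.

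First I would use the new assumption that each $\phi_i$ is $1/\eta$-strongly convex to conclude that each conjugate $\phi_i^*$ is $\eta$-smooth, and the unchanged assumption that $g$ is $1$-strongly convex to conclude that $g^*$ has $1$-Lipschitz gradient. Since $-D(x) = \frac{1}{n}\sum_{i=1}^n \phi_i^*(-x_i) + \lambda g^*\!\bigl(\frac{1}{\lambda n} Ax\bigr)$, the first term has a diagonal Hessian bounded above by $\eta/n$, while the gradient of the second term, $\frac{1}{n}A^T \nabla g^*\!\bigl(\frac{1}{\lambda n} Ax\bigr)$, is Lipschitz with constant $\frac{\|A\|_2^2}{\lambda n^2}$. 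Summing these, $-D$ is differentiable and its gradient is Lipschitz with constant
\[
L ~=~ \frac{\eta}{n} + \frac{\|A\|_2^2}{\lambda n^2} ~=~ \frac{\lambda \eta n + \|A\|_2^2}{\lambda n^2}.
\]

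Next, because $-D$ is convex and $L$-smooth with minimum value $-D^\star$, the standard inequality for smooth convex functions (e.g., Nesterov, Theorem~2.1.5) gives
\[
D^\star - D(x^{(k)}) ~\geq~ \frac{1}{2L}\|\nabla(-D)(x^{(k)})\|_2^2 ~=~ \frac{1}{2L}\|D'(x^{(k)})\|_2^2,
\]
where the subgradient $D'(x^{(k)})$ from Lemma~\ref{pdgbound} coincides with the (unique) gradient under the current assumptions. Rearranging yields $\|D'(x^{(k)})\|_2^2 \leq 2L(D^\star - D(x^{(k)}))$.

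Finally I would plug this into Lemma~\ref{pdgbound} to obtain
\[
P(w^{(k)}) - D(x^{(k)}) ~\leq~ \frac{n}{2\gamma} \|D'(x^{(k)})\|_2^2 ~\leq~ \frac{nL}{\gamma}\bigl(D^\star - D(x^{(k)})\bigr),
\]
and substitute $L = \frac{\lambda\eta n+\|A\|_2^2}{\lambda n^2}$ to recover exactly the claimed constant $\frac{\lambda\eta n+\|A\|_2^2}{\lambda \gamma n}$. The only subtle step is verifying the additive form of $L$ — specifically, confirming that the coordinate-diagonal Hessian bound $\eta/n$ for $\frac{1}{n}\sum_i\phi_i^*(-x_i)$ combines additively with the spectral-norm bound $\|A\|_2^2/(\lambda n^2)$ for the composite quadratic-like term — but this follows by the triangle inequality applied to $\nabla(-D)(x) - \nabla(-D)(y)$, exactly analogous to the computation used in the proof of Theorem~\ref{thm:dual-erm}.
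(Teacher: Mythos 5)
Your proposal is correct and follows essentially the same route as the paper: identify $-D$ as convex and $L$-smooth with $L=\frac{\lambda\eta n+\|A\|_2^2}{\lambda n^2}$ (the $\eta$-smoothness of each $\phi_i^*$ plus the $\frac{\|A\|_2^2}{\lambda n^2}$-Lipschitz gradient of $\lambda g^*(\frac{1}{\lambda n}Ax)$ combining additively), bound $\|D'(x^{(k)})\|_2^2\leq 2L\,(D^\star-D(x^{(k)}))$ via the standard smooth-convex inequality (the paper invokes Nesterov's Theorem~2.1.5 at $x^\star$ with $\nabla D(x^\star)=0$, which is the same estimate you use), and finish with Lemma~\ref{pdgbound}. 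No gaps.
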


\begin{proof}
Since $g(w)$ is $1$-strongly convex, the function
$f(x)=\lambda g^*\left(\frac{A x}{\lambda n} \right)$ 
is differentiable and $\nabla f(x)$
has Lipschitz constant $\frac{\|A\|_2^2}{\lambda n^2}$.
Similarly, since each $\phi_i$ is $1/\eta$ strongly convex, 
the function $\Psi(x)=\frac{1}{n}\sum_{i=1}^n \phi_i^*(-x_i)$
is differentiable and $\nabla \Psi(x)$ has Lipschitz constant $\frac{\eta}{n}$.
Therefore, the function $-D(x)=f(x)+\Psi(x)$ is smooth and
its gradient has Lipschitz constant
\[
    \frac{\|A\|_2^2}{\lambda n^2} + \frac{\eta}{n}
    =\frac{\lambda\eta n+\|A\|_2^2}{\lambda n^2}.
\]
It is known that (e.g., \cite[Theorem~2.1.5]{Nesterov04book}) 
if a function $F(x)$ is convex and $L$-smooth, then
$$
F(y)\geq F(x)+\nabla F(x)^T(y-x)+\frac{1}{2L}\|\nabla F(x)-\nabla F(y)\|_2^2
$$
for all $x,y\in\reals^n$. 
Applying the above inequality to $F(x)=-D(x)$, we get for all $x$ and $y$,
\begin{equation}\label{eqn:smooth-D}
-D(y)\geq -D(x)-\nabla D(x)^T(y-x)+\frac{\lambda n^2}{2(\lambda\eta n+\|A\|_2^2)}\|\nabla D(x)-\nabla D(y)\|_2^2 .
\end{equation}

Under our assumptions, the saddle-point problem~\eqref{spp} has a unique
solution $(x^\star,a^\star,w^\star)$,
where $w^\star$ and $x^\star$ are the solutions to the primal and dual 
problems~\eqref{eqn:erm-primal} and~\eqref{eqn:erm-dual}, respectively.
Moreover, they satisfy the optimality conditions
\[
    A_i^T w^\star-a^\star_i=0, \qquad
    a^\star_i=\nabla\phi_i^*(-x^\star_i), \qquad
    w^\star=\nabla g^* \left(\frac{1}{\lambda n} A x^\star \right).
\]
Since~$D$ is differentiable in this case, we have $D'(x)=\nabla D(x)$ and
$\nabla D(x^\star)=0$.
Now we choose~$x$ and~$y$ in~\eqref{eqn:smooth-D} to be $x^\star$ and $x^{(k)}$
respectively.
This leads to
\begin{eqnarray*}
\|\nabla D(x^{(k)})\|_2^2~=~\|\nabla D(x^{(k)})-\nabla D(x^\star)\|_2^2
~\leq~ \frac{2(\lambda \eta n+\|A\|_2^2)}{\lambda n^2}(D(x^\star)-D(x^{(k)})).
\end{eqnarray*}
Then the conclusion can be derived from Lemma \ref{pdgbound}.
\end{proof}

The assumption that each $\phi_i$ is $1/\gamma$-smooth and $1/\eta$-strongly
convex implies that $\gamma\leq\eta$. 
Therefore the coefficient on the right-hand side of~\eqref{eqn:gap-by-dual}
satisfies
$
\frac{\lambda\eta n+\|A\|_2^2}{\lambda\gamma n} > 1 .
$
This is consistent with the fact that for any pair of primal and dual points
$w^{(k)}$ and $x^{(k)}$, we always have
$P(w^{(k)})-D(x^{(k)}) \geq D^\star - D(x^{(k)})$.

\begin{corollary}\label{cor:priml-dual}
Under the assumptions of Theorem~\ref{thm:gap-by-dual},
in order to obtain an expected primal-dual gap 
$\E\left[P(w^{(k)})-D(x^{(k)})\right]\leq\epsilon$ using the APCG method,
it suffices to have 
\[
k \geq \left(n+\sqrt{\frac{n R^2}{\lambda\gamma}}\right)
\log\left(\frac{(\lambda\eta n+\|A\|_2^2)}{\lambda\gamma n} 
\frac{C}{\epsilon}\right) ,
\]
where 
the constant $C$ is defined in~\eqref{eqn:C-def}.
\end{corollary}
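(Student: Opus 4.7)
The plan is to combine Theorem~\ref{thm:gap-by-dual} with Theorem~\ref{thm:dual-erm} by a simple rescaling of the target accuracy. First I would take expectations on both sides of the bound in Theorem~\ref{thm:gap-by-dual}, yielding
\[
\E\bigl[P(w^{(k)})-D(x^{(k)})\bigr] ~\leq~ \frac{\lambda\eta n+\|A\|_2^2}{\lambda\gamma n}\,\E\bigl[D^\star-D(x^{(k)})\bigr].
\]
Then, to guarantee that the left-hand side is at most~$\epsilon$, it suffices to drive the expected dual optimality gap down to
\[
\epsilon' ~\eqdef~ \frac{\lambda\gamma n}{\lambda\eta n+\|A\|_2^2}\,\epsilon.
\]

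Next I would invoke Theorem~\ref{thm:dual-erm} with target accuracy~$\epsilon'$ in place of~$\epsilon$. That theorem tells us that $\E[D^\star-D(x^{(k)})]\leq\epsilon'$ holds whenever
\[
k ~\geq~ \left(n+\sqrt{\frac{nR^2}{\lambda\gamma}}\right)\log(C/\epsilon').
\]
Substituting $\epsilon'=\frac{\lambda\gamma n}{\lambda\eta n+\|A\|_2^2}\epsilon$ into $\log(C/\epsilon')$ gives exactly
\[
\log\!\left(\frac{\lambda\eta n+\|A\|_2^2}{\lambda\gamma n}\cdot\frac{C}{\epsilon}\right),
\]
which is the expression appearing in the statement of the corollary.

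The only thing to verify along the way is that the hypotheses of Theorem~\ref{thm:dual-erm} are implied by those of Theorem~\ref{thm:gap-by-dual}: the latter assumes $g$ is $1$-strongly convex and each $\phi_i$ is $1/\gamma$-smooth (plus the additional $1/\eta$-strong convexity of~$\phi_i$, used only in Theorem~\ref{thm:gap-by-dual}), so Assumption~\ref{asmp:erm} holds and Theorem~\ref{thm:dual-erm} applies with the same constants~$\gamma$, $\lambda$, $R$, and~$C$. There is no real obstacle here; the argument is just a two-line chaining of the two theorems via the rescaling $\epsilon\mapsto\epsilon'$, so the writeup should be correspondingly short.
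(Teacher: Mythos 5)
Your proposal is correct and is exactly the intended argument: the paper states the corollary without an explicit proof precisely because it follows by taking expectations in Theorem~\ref{thm:gap-by-dual} and invoking Theorem~\ref{thm:dual-erm} with the rescaled accuracy $\epsilon'=\frac{\lambda\gamma n}{\lambda\eta n+\|A\|_2^2}\epsilon$, which is what you do. Your check that the hypotheses of Theorem~\ref{thm:gap-by-dual} (together with $\|A_i\|_2\leq R$) subsume Assumption~\ref{asmp:erm} is the only point needing verification, and you handled it correctly.
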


The above results require that each~$\phi_i$ be both smooth and
strongly convex. 
One example that satisfies such assumptions is ridge regression,
where $\phi_i(a_i)=\frac{1}{2}(a_i-b_i)^2$ and $g(w)=\frac{1}{2}\|w\|_2^2$.
For problems that only satisfy Assumption~\ref{asmp:erm}, we may add a small
strongly convex term $\frac{1}{2\eta}(A_i^T w)^2$ to each loss
$\phi_i(A_i^T w)$, and obtain that the primal-dual gap 
(of a slightly perturbed problem)
share the same accelerated linear convergence rate as the dual optimality gap. 
Alternatively, we can obtain the same guarantee with the extra cost of
a proximal full gradient step. This is summarized in the following theorem.

\begin{theorem}\label{thm:gap-by-dual-2}
%Suppose $g$ is $1$-strongly convex and each $\phi_i$ is $1/\gamma$-smooth.
Suppose Assumption~\ref{asmp:erm} holds. 
Given any dual point $x^{(k)}$, define
\begin{equation}\label{eqn:full-prox}
 T(x^{(k)}) = \arg\min_{x\in\reals^n} \left\{ \langle \nabla f(x^{(k)}),
 x \rangle + \frac{\|A\|_2^2}{2\lambda n^2}\|x-x^{(k)}\|_2^2 + \Psi(x) \right\},
\end{equation}
where~$f$ and~$\Psi$ are defined in the simple 
splitting~\eqref{eqn:simple-split}.
Let
\begin{equation}\label{eqn:full-prox-w}
    w^{(k)} = \omega(T(x^{(k)})) 
    = \nabla g^*\left(\frac{1}{\lambda n} A T(x^{(k)})\right).
\end{equation}
Then we have
\begin{equation}\label{eqn:gap-by-dual-2}
P(w^{(k)})-D(T(x^{(k)}))\leq \frac{4\|A\|_2^2}{\lambda\gamma n}
\left(D(x^\star)-D(x^{(k)})\right).
\end{equation}
\end{theorem}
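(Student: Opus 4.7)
The plan is to apply Lemma~\ref{pdgbound} to the dual point $T(x^{(k)})$, with the primal correspondence $w^{(k)} = \omega(T(x^{(k)}))$ of~\eqref{eqn:full-prox-w} and a best-response $a^{(k)}$ induced by the optimality condition of the proximal step~\eqref{eqn:full-prox}, and then control the resulting residual $\|A^T w^{(k)} - a^{(k)}\|_2^2$ by chaining (i) the optimality condition of the proximal step, (ii) Lipschitz continuity of $\nabla f$, and (iii) the classical proximal descent inequality. Throughout I set $L = \|A\|_2^2/(\lambda n^2)$, which is the coordinate-wise and global Lipschitz constant of $\nabla f$ under the simple splitting~\eqref{eqn:simple-split}.

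First, the first-order optimality condition for~\eqref{eqn:full-prox} produces a subgradient
\[
s \;=\; L\bigl(x^{(k)} - T(x^{(k)})\bigr) - \nabla f(x^{(k)}) \;\in\; \partial \Psi(T(x^{(k)})).
\]
Under~\eqref{eqn:simple-split}, each block satisfies $s_i = -a^{(k)}_i/n$ with $a^{(k)}_i \in \partial \phi_i^*(-T(x^{(k)})_i)$, so $a^{(k)}$ is an admissible choice in~\eqref{pfromd1} evaluated at $T(x^{(k)})$, and $w^{(k)}$ is exactly~\eqref{pfromd2} at $T(x^{(k)})$. Lemma~\ref{pdgbound} then gives
\[
P(w^{(k)}) - D(T(x^{(k)})) \;\le\; \frac{1}{2n\gamma}\,\|A^T w^{(k)} - a^{(k)}\|_2^2 .
\]
Using the identity $\nabla f(x) = \tfrac{1}{n} A^T \nabla g^*(Ax/(\lambda n))$ together with $a^{(k)} = -n s$, I rewrite the residual as $A^T w^{(k)} - a^{(k)} = n\bigl(\nabla f(T(x^{(k)})) + s\bigr)$, and substituting the above formula for $s$ yields
\[
\nabla f(T(x^{(k)})) + s \;=\; \bigl(\nabla f(T(x^{(k)})) - \nabla f(x^{(k)})\bigr) + L\bigl(x^{(k)} - T(x^{(k)})\bigr).
\]
Triangle inequality and the $L$-Lipschitz bound on $\nabla f$ then give $\|A^T w^{(k)} - a^{(k)}\|_2 \le 2nL\,\|x^{(k)} - T(x^{(k)})\|_2$.

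Finally, the standard proximal descent lemma applied to $F = -D$ with step-size $1/L$ (which follows directly from smoothness of $f$ and the defining optimality of $T(x^{(k)})$ with test point $z = x^{(k)}$) yields $F(T(x^{(k)})) \le F(x^{(k)}) - \tfrac{L}{2}\|x^{(k)} - T(x^{(k)})\|_2^2$, and combining with $F(T(x^{(k)})) \ge F(x^\star) = -D^\star$ gives
\[
\tfrac{L}{2}\|x^{(k)} - T(x^{(k)})\|_2^2 \;\le\; D^\star - D(x^{(k)}) .
\]
Chaining the three bounds produces
\[
P(w^{(k)}) - D(T(x^{(k)})) \;\le\; \frac{1}{2n\gamma}\cdot 4n^2 L^2 \cdot \frac{2}{L}\bigl(D^\star - D(x^{(k)})\bigr) \;=\; \frac{4nL}{\gamma}\bigl(D^\star - D(x^{(k)})\bigr) \;=\; \frac{4\|A\|_2^2}{\lambda\gamma n}\bigl(D^\star - D(x^{(k)})\bigr),
\]
which is~\eqref{eqn:gap-by-dual-2}. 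The only delicate point is the bookkeeping in the first step, namely matching the abstract subgradient $s$ arising from~\eqref{eqn:full-prox} with a vector $a^{(k)}$ admissible in Lemma~\ref{pdgbound}; once this identification is made explicit, the remaining steps are standard manipulations of the proximal gradient mapping.
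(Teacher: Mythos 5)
Your proof is correct and follows essentially the same route as the paper: bound the primal--dual gap at $T(x^{(k)})$ via Lemma~\ref{pdgbound}, then bound the resulting dual subgradient norm at $T(x^{(k)})$ by the prox displacement and the displacement by the dual optimality gap. The only difference is that the paper invokes Nesterov's gradient-mapping result (\cite[Theorem~1]{Nesterov13composite}, giving $\|D'(T(x^{(k)}))\|_2^2 \le 4\|G(x^{(k)})\|_2^2 \le 8L_f(D^\star-D(x^{(k)}))$) as a black box, whereas you rederive exactly these two inequalities from the prox optimality condition, Lipschitz continuity of $\nabla f$, and the proximal descent lemma, which makes your write-up self-contained but not a genuinely different argument.
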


\begin{proof}
Notice that the Lipschitz constant of $\nabla f(x)$ is 
$L_f=\frac{\|A\|_2^2}{\lambda n^2}$, which is used in calculating $T(x^{(k)})$.
The corresponding gradient mapping \cite{Nesterov13composite} at $x^{(k)}$ is 
$$
G(x^{(k)})= L_f \left(x^{(k)}-T(x^{(k)})\right)
= \frac{\|A\|_2^2}{\lambda n^2}\left(x^{(k)} - T(x^{(k)})\right).
$$ 
According to \cite[Theorem~1]{Nesterov13composite}, we have 
$$
\left\|D'\left(T(x^{(k)})\right)\right\|_2^2 
\leq 4\left\|G(x^{(k)})\right\|_2^2 
\leq 8 L_f\left(D(x^\star)-D(x^{(k)})\right)
= \frac{8\|A\|_2^2}{\lambda n^2}\left(D(x^\star)-D(x^{(k)})\right).
$$
The conclusion can then be derived from Lemma \ref{pdgbound}.
\end{proof}

Here the coefficient in the right-hand side of~\eqref{eqn:gap-by-dual-2},
$\frac{4\|A\|_2^2}{\lambda \gamma n}$, can be less than~$1$.
This does not contradict with the fact that the primal-dual gap should be no less
than the dual optimality gap, because the primal-dual gap on the left-hand side
of~\eqref{eqn:gap-by-dual-2} is measured at $T(x^{(k)})$ rather than
$x^{(k)}$.

\begin{corollary}\label{cor:priml-dual-extra}
Suppose Assumption~\ref{asmp:erm} holds.
In order to obtain a primal-dual pair $w^{(k)}$ and $x^{(k)}$ such that
$\E\left[P(w^{(k)})-D(T(x^{(k)}))\right]\leq\epsilon$,
it suffices to run the APCG method for 
\[
k \geq \left(n+\sqrt{\frac{n R^2}{\lambda\gamma}}\right)
\log\left(\frac{4\|A\|_2^2}{\lambda\gamma n}\frac{C}{\epsilon}\right)
\]
steps and follow with a proximal full gradient step~\eqref{eqn:full-prox}
and~\eqref{eqn:full-prox-w},
where $C$ is defined in~\eqref{eqn:C-def}.
\end{corollary}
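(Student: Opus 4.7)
The plan is to combine Theorem~\ref{thm:gap-by-dual-2} with Theorem~\ref{thm:dual-erm} in the most direct way possible: the first converts a bound on the dual suboptimality into a bound on the primal-dual gap measured at $T(x^{(k)})$, and the second tells us how many APCG iterations are required to drive the dual suboptimality below any prescribed threshold.

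First, I would apply Theorem~\ref{thm:gap-by-dual-2} to the APCG iterate $x^{(k)}$. Taking expectations on both sides of~\eqref{eqn:gap-by-dual-2} gives
\[
\bE\bigl[P(w^{(k)})-D(T(x^{(k)}))\bigr] ~\leq~ \frac{4\|A\|_2^2}{\lambda\gamma n}\,\bE\bigl[D^\star-D(x^{(k)})\bigr],
\]
where $w^{(k)}=\omega(T(x^{(k)}))$ as defined in~\eqref{eqn:full-prox-w}. Thus it is enough to make the expected dual suboptimality at most $\epsilon' \eqdef \frac{\lambda\gamma n}{4\|A\|_2^2}\,\epsilon$.

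Next, I invoke Theorem~\ref{thm:dual-erm} with this target accuracy $\epsilon'$. That theorem states that
\[
k \;\geq\; \left(n+\sqrt{\frac{nR^2}{\lambda\gamma}}\right)\log(C/\epsilon')
\]
iterations of APCG suffice to achieve $\bE[D^\star-D(x^{(k)})]\leq\epsilon'$. Substituting $\epsilon' = \frac{\lambda\gamma n}{4\|A\|_2^2}\,\epsilon$ gives $\log(C/\epsilon') = \log\!\bigl(\frac{4\|A\|_2^2}{\lambda\gamma n}\cdot\frac{C}{\epsilon}\bigr)$, which matches the iteration bound in the statement exactly. Finally, one additional proximal full gradient step~\eqref{eqn:full-prox} produces $T(x^{(k)})$, and $w^{(k)}$ is obtained from~\eqref{eqn:full-prox-w}, so the entire procedure is as described.

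There is no real obstacle here; the corollary is purely a bookkeeping consequence of Theorems~\ref{thm:gap-by-dual-2} and~\ref{thm:dual-erm}. The only minor subtlety is that Theorem~\ref{thm:gap-by-dual-2} is a pointwise (deterministic in $x^{(k)}$) inequality, so taking expectations with respect to $\xi_{k-1}$ is valid and commutes with the multiplicative constant $\frac{4\|A\|_2^2}{\lambda\gamma n}$, which is what allows the complexity bound of Theorem~\ref{thm:dual-erm} to be plugged in directly.
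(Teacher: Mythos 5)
Your proposal is correct and is exactly the intended argument: the paper states this corollary as an immediate consequence of Theorem~\ref{thm:gap-by-dual-2} (taking expectations of the pointwise bound~\eqref{eqn:gap-by-dual-2}) combined with the iteration complexity of Theorem~\ref{thm:dual-erm} applied with target accuracy $\frac{\lambda\gamma n}{4\|A\|_2^2}\epsilon$, which yields precisely the stated logarithmic factor. No discrepancy with the paper's reasoning.
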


We note that the computational cost of the proximal full gradient 
step~\eqref{eqn:full-prox} is comparable with~$n$ proximal coordinate gradient
steps.
Therefore the overall complexity of of this scheme is on the same order
as necessary for the expected dual optimality gap to reach~$\epsilon$.
Actually the numerical experiments in Section~\ref{sec:experiments}
show that running the APCG method alone without the final full gradient step
is sufficient to reduce the primal-dual gap at a very fast rate.

\subsection{Implementation details}
\label{sec:erm-impl}

Here we show how to exploit the structure of the regularized ERM problem
to efficiently compute the coordinate gradient $\nabla_{i_k} f(y^{(k)})$,
and totally avoid full-dimensional updates in Algorithm~\ref{alg:apcg-mu-effi}.

\begin{algorithm}[t]
\caption{APCG for solving regularized ERM with $\mu>0$}
\label{alg:apcg-erm}
\textbf{input:} $x^{(0)}\in\dom(\Psi)$ and convexity parameter 
$\mu=\frac{\lambda\gamma n}{R^2+\lambda\gamma n}$.\\[0.5ex]
\textbf{initialize:} 
set $\alpha=\frac{\sqrt{\mu}}{n}$ and $\rho=\frac{1-\alpha}{1+\alpha}$,
and let
$u^{(0)}=0$, $v^{(0)}=x^{(0)}$, $p^{(0)}=0$ and $q^{(0)}=Ax^{(0)}$.
%(default $x^{(0)}=0$ so everything is zero)
\\[0.5ex]
\textbf{iterate:} repeat for $k=0,1,2,\ldots$ 
\begin{enumerate} \itemsep 0pt 
\item Choose $i_k\in\{1,\ldots,n\}$ uniformly at random, compute
    the coordinate gradient 
    \[
    \nabla^{(k)}_{i_k} = \frac{1}{\lambda n^2} 
    \left(\rho^{k+1} A_{i_k}^T p^{(k)} + A_{i_k}^T q^{(k)}\right)
    + \frac{\gamma}{n} \left(\rho^{k+1} u^{(k)}_{i_k} + v^{(k)}_{i_k} \right) .
    \]
\item Compute coordinate increment 
\begin{equation}\label{eqn:erm-h}
    h^{(k)}_{i_k} = \argmin_{h\in\reals^{N_{i_k}}} 
    \left\{ \frac{\alpha(\|A_{i_k}\|^2+\lambda\gamma n)}{2\lambda n} \|h \|_2^2 
    + \langle \nabla^{(k)}_{i_k} , h\rangle 
    + \Psi_{i_k}\left(-\rho^{k+1}u^{(k)}_{i_k}+v^{(k)}_{i_k}+h\right)
\right\}.
\end{equation}
\item 
Let $u^{(k+1)} = u^{(k)}$ and $v^{(k+1)} = v^{(k)}$, and update
\begin{eqnarray}
u^{(k+1)}_{i_k} = u^{(k)}_{i_k} - \frac{1-n\alpha}{2\rho^{k+1}} h^{(k)}_{i_k},
~~~~&\qquad&
v^{(k+1)}_{i_k} = v^{(k)}_{i_k} + \frac{1+n\alpha}{2} h^{(k)}_{i_k},
\nonumber\\[1ex]
p^{(k+1)} = p^{(k)} - \frac{1-n\alpha}{2\rho^{k+1}} A_{i_k} h^{(k)}_{i_k},
&\qquad&
q^{(k+1)} = q^{(k)} + \frac{1+n\alpha}{2} A_{i_k} h^{(k)}_{i_k} . 
\label{eqn:p-q-update}
\end{eqnarray}
\end{enumerate}
\textbf{output:} approximate dual and primal solutions
\[
x^{(k+1)}=\rho^{k+1} u^{(k+1)} + v^{(k+1)} , \qquad
w^{(k+1)} = \frac{1}{\lambda n} \left(\rho^{k+1} p^{(k+1)} + q^{(k+1)}\right).
\]
\end{algorithm}

We focus on the special case $g(w)=\frac{1}{2}\|w\|_2^2$
and show how to compute $\nabla_{i_k} f(y^{(k)})$.
In this case, $g^*(v) = \frac{1}{2}\|v\|_2^2$ and $\nabla g^*(\cdot)$ is
the identity map.
According to~\eqref{eqn:splitting}, 
\[
    \nabla_{i_k} f(y^{(k)}) =\frac{1}{\lambda n^2} A_{i_k}^T (A y^{(k)}) 
    +\frac{\gamma}{n} y^{(k)}_{i_k} .
\]
Notice that we do not form $y^{(k)}$ in Algorithm~\ref{alg:apcg-mu-effi}.
By Proposition~\ref{prop:equiv}, we have 
\[
    y^{(k)}=\rho^{k+1} u^{(k)}+v^{(k)}.
\]
So we can store and update the two vectors
\[
   p^{(k)}=A u^{(k)}, \qquad q^{(k)}=A v^{(k)},
\]
and obtain
\[
    A y^{(k)} = \rho^{k+1} p^{(k)} + q^{(k)}.
\]
Since the update of both $u^{(k)}$ and $v^{(k)}$ at each iteration only
involves the single coordinate~$i_k$, we can update $p^{(k)}$ and $q^{(k)}$ 
by adding or subtracting a scaled column $A_{i_k}$,
as given in~\eqref{eqn:p-q-update}.
The resulting method is detailed in Algorithm~\ref{alg:apcg-erm}.

In Algorithm~\ref{alg:apcg-erm}, we use $\nabla^{(k)}_{i_k}$ to represent
$\nabla_{i_k} f(y^{(k)})$ to reflect the fact that we never form~$y^{(k)}$
explicitly.
The function $\Psi_i$ in~\eqref{eqn:erm-h} is the one given 
in~\eqref{eqn:splitting}, i.e., 
\[
    \Psi_i(x_i) = \frac{1}{n} \phi_i^*(-x_i) - \frac{\gamma}{2n}\|x_i\|_2^2.
\]
Each iteration of Algorithm~\ref{alg:apcg-erm} only involves 
the two inner products $A_{i_k}^T p^{(k)}$ and $A_{i_k}^T q^{(k)}$ 
in computing $\nabla^{(k)}_{i_k}$,
and the two vector additions in~\eqref{eqn:p-q-update}.
They all cost $O(d)$ rather than $O(n)$.
When the $A_i$'s are sparse (the case of most large-scale
problems), these operations can be carried out very efficiently.
Basically, each iteration of Algorithm~\ref{alg:apcg-erm} only 
cost twice as much as that of SDCA \cite{HCLKS08,SSZhang13SDCA}.

In Step~3 of Algorithm~\ref{alg:apcg-erm}, the division by $\rho^{k+1}$ in
updating $u^{(k)}$ and $p^{(k)}$ may cause numerical problems 
because $\rho^{k+1}\to 0$ as the number of iterations~$k$ getting large.
To fix this issue, we notice that $u^{(k)}$ and $p^{(k)}$ are always accessed
in Algorithm~\ref{alg:apcg-erm} in the forms of
$\rho^{k+1}u^{(k)}$ and $\rho^{k+1}p^{(k)}$.
So we can replace $u^{(k)}$ and $p^{(k)}$ by
%$\bar u^{(k)}=\rho^{k+1}u^{(k)}$ and $\bar p^{(k)}=\rho^{k+1}p^{(k)}$,
\[
    \bar u^{(k)}=\rho^{k+1}u^{(k)}, \qquad
    \bar p^{(k)}=\rho^{k+1}p^{(k)},
\]
which can be updated without numerical problem.
To see this, we have
\begin{eqnarray*}
    \bar u^{(k+1)} &=&  \rho^{k+2} u^{(k+1)} \\
    &=& \rho^{k+2} \left( u^{(k)} - \frac{1-n\alpha}{2\rho^{k+1}} U_{i_k}h^{(k)}_{i_k}\right) \\
    &=& \rho\left( \bar u^{(k)} - \frac{1-n\alpha}{2} U_{i_k}h^{(k)}_{i_k}\right).
\end{eqnarray*}
%A similar update for $\bar p^{(k)}=\rho^{k+1}p^{(k)}$ can also be derived.
Similarly, we have
\[
    \bar p^{(k+1)} = \rho \left(\bar p^{(k)} - \frac{1-n\alpha}{2} A_{i_k} h^{(k)}_{i_k} \right).
\]

\subsection{Numerical experiments}
\label{sec:experiments}

In our experiments, we solve the regularized ERM problem~\eqref{eqn:erm-primal}
with smoothed hinge loss for binary classification.
That is, we pre-multiply each feature vector $A_i$ by its label 
$b_i\in\{\pm 1\}$ and let
        \[
            \phi_i(a) = \left\{ \begin{array}{ll} 0 & \mathrm{if}~ a \geq 1,\\
                1-a-\frac{\gamma}{2} & \mathrm{if}~ a\leq 1-\gamma,\\
                \frac{1}{2\gamma}(1-a)^2 & \mathrm{otherwise,}
            \end{array} \right.
            \qquad i=1,\ldots,n.
        \]
The conjugate function of $\phi_i$ is 
$\phi_i^*(b)=b+\frac{\gamma}{2}b^2$ if $b\in[-1,0]$ and $\infty$ otherwise.
%        \[
%            \phi^*(b) = \left\{ \begin{array}{ll} 
%                b+\frac{\gamma}{2}b^2 & \mathrm{if}~ b\in[-1,0] \\
%                \infty & \mathrm{otherwise.}
%            \end{array} \right.
%        \]
Therefore we have 
\[
\Psi_i(x_i) = \frac{1}{n}\left(\phi_i^*(-x_i)-\frac{\gamma}{2}\|x_i\|_2^2\right) 
            = \left\{ \begin{array}{ll} \frac{-x_i}{n} & \mathrm{if}~ x_i\in[0,1]\\ \infty & \mathrm{otherwise.} \end{array} \right.
\]
For the regularization term, we use $g(w)=\frac{1}{2}\|w\|_2^2$.
We used three publicly available datasets obtained from \cite{LIBSVMdata}.
The characteristics of these datasets are summarized 
in Table~\ref{tab:datasets}.

\begin{table}[b]
\begin{center}
    \begin{tabular}{|r|r|r|r|r|}
        \hline
datasets & source & number of samples $n$ & number of features $d$ & sparsity \\
        \hline
        {RCV1}   & \cite{RCV1} & 20,242 &  47,236 & 0.16\% \\
        {covtype}& \cite{covertype} & 581,012 &  54 & 22\%  \\ 
        {News20} & \cite{News20binary,Lang95News20} & 19,996 &  1,355,191 & 0.04\%  \\
        \hline
    \end{tabular}
    \caption{Characteristics of three binary classification datasets
        obtained from \cite{LIBSVMdata}.}
%    (available from the LIBSVM web page: 
%    \texttt{http://www.csie.ntu.edu.tw/\~{}cjlin/libsvmtools/datasets}).}
    \label{tab:datasets}
\end{center}
\end{table}

In our experiments, we comparing the APCG method (Algorithm~\ref{alg:apcg-erm})
with SDCA \cite{SSZhang13SDCA} 
and the accelerated full gradient method (AFG) \cite{Nesterov04book} 
with and additional line search procedure to improve efficiency.
%For AFG, each iteration involves a single pass over the whole data set.
When the regularization parameter $\lambda$ is not too small
(around $10^{-4}$), then APCG performs similarly as SDCA as predicted
by our complexity results,
and they both outperform AFG by a substantial margin.

Figure~\ref{fig:primal} shows the reduction of primal optimality
$P(w^{(k)})-P^\star$ by
the three methods in the ill-conditioned setting, 
with $\lambda$ varying form $10^{-5}$ to $10^{-8}$.
For APCG, the primal points $w^{(k)}$ are generated simply as 
$w^{(k)}=\omega(x^{(k)})$ defined in~\eqref{eqn:omega}.
Here we see that APCG has superior
performance in reducing the primal objective value compared with SDCA and
AFG, even 
without performing the final proximal full gradient step described in 
Theorem~\ref{thm:gap-by-dual-2}.
%though our theory only gives complexity for solving the dual ERM problem. 
%AFG eventually catches up for the case with very large condition number
%(see the plots for $\lambda=10^{-8}$).

Figure~\ref{fig:gap} shows the reduction of primal-dual gap
$P(w^{(k)})-D(x^{(k)})$ by the two methods APCG and SDCA.
We can see that in the ill-conditioned setting,
the APCG method is more effective in reducing the primal-dual gap as well. 

\begin{figure}[p]
    \begin{tabular}[h]{@{}c|ccc@{}}
    $\lambda$ & RCV1 & covertype & News20 \\
    \hline \\
    \raisebox{10ex}{$10^{-5}$} 
        &\quad \includegraphics[width=0.28\textwidth]{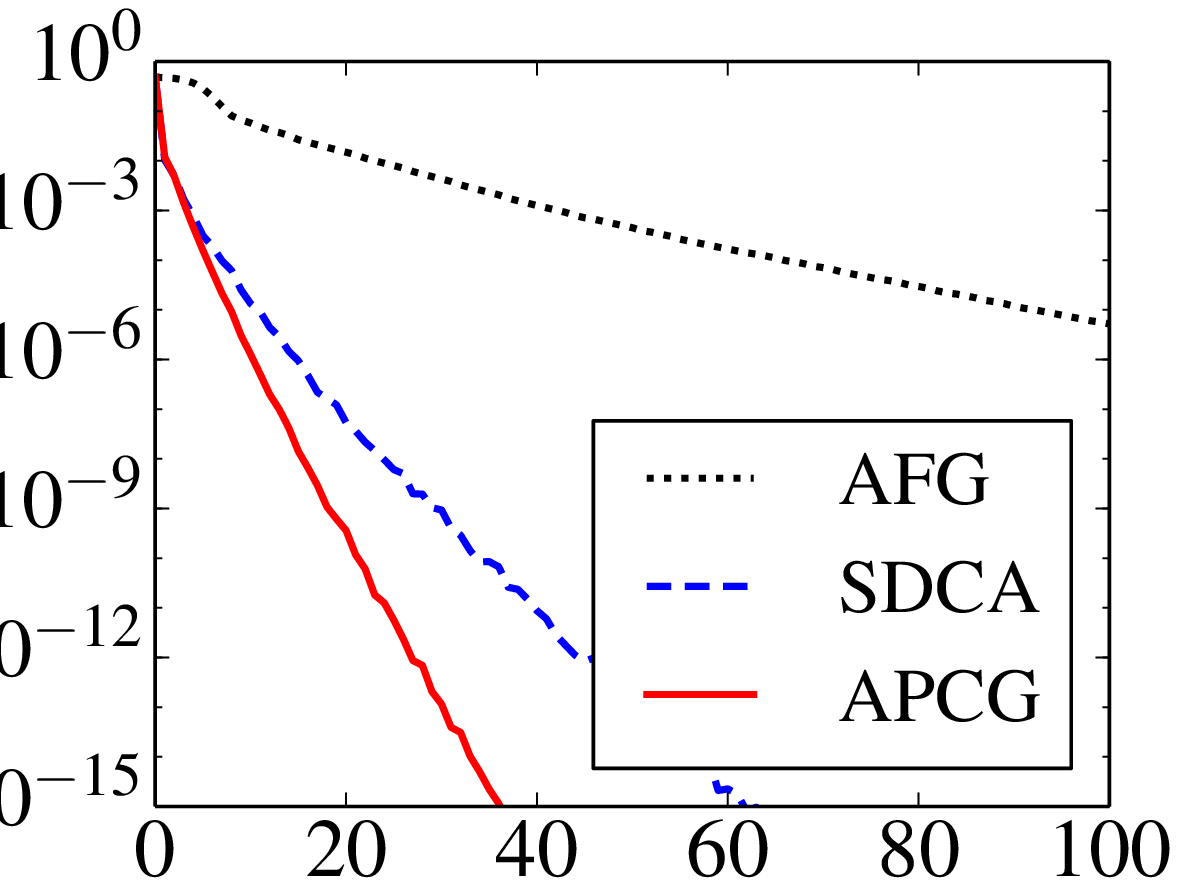} 
        & \includegraphics[width=0.28\textwidth]{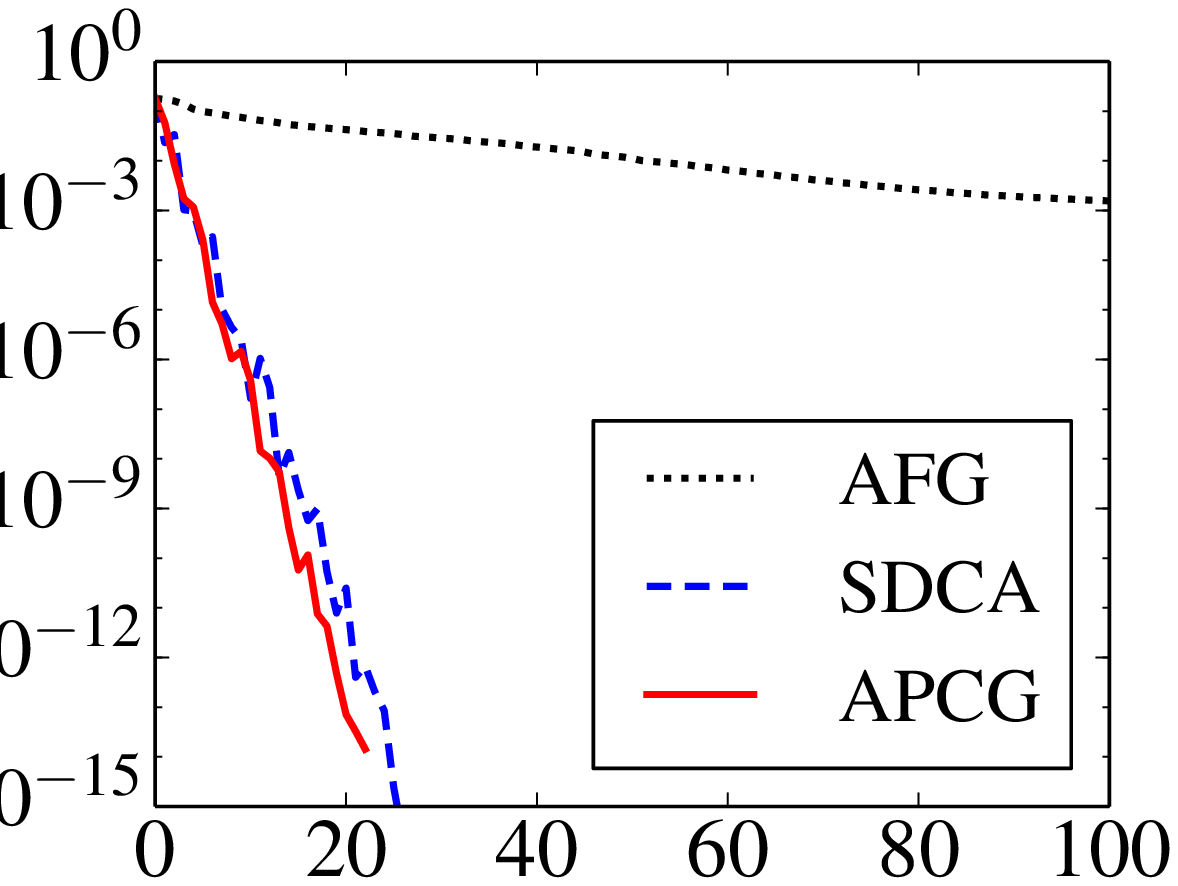} 
        & \includegraphics[width=0.28\textwidth]{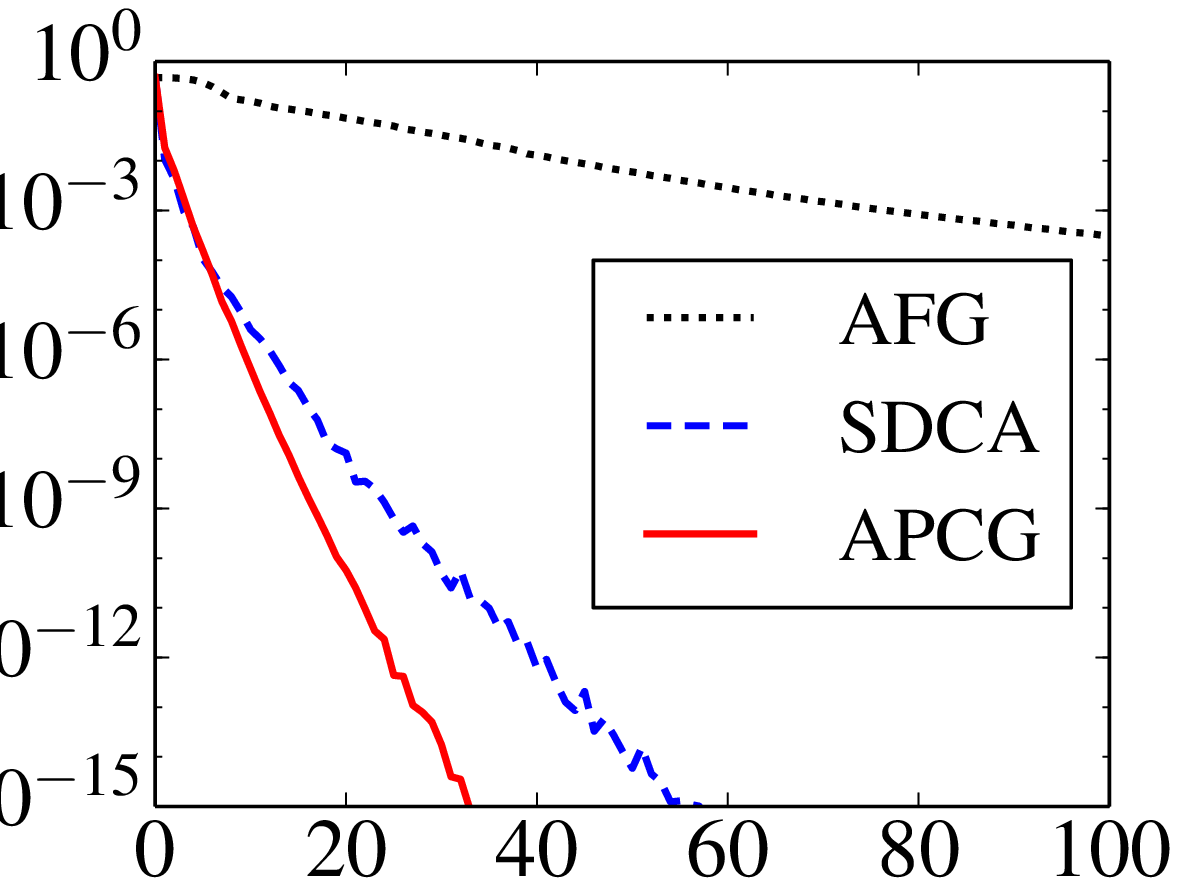}\\
    \raisebox{10ex}{$10^{-6}$} 
        &\quad \includegraphics[width=0.28\textwidth]{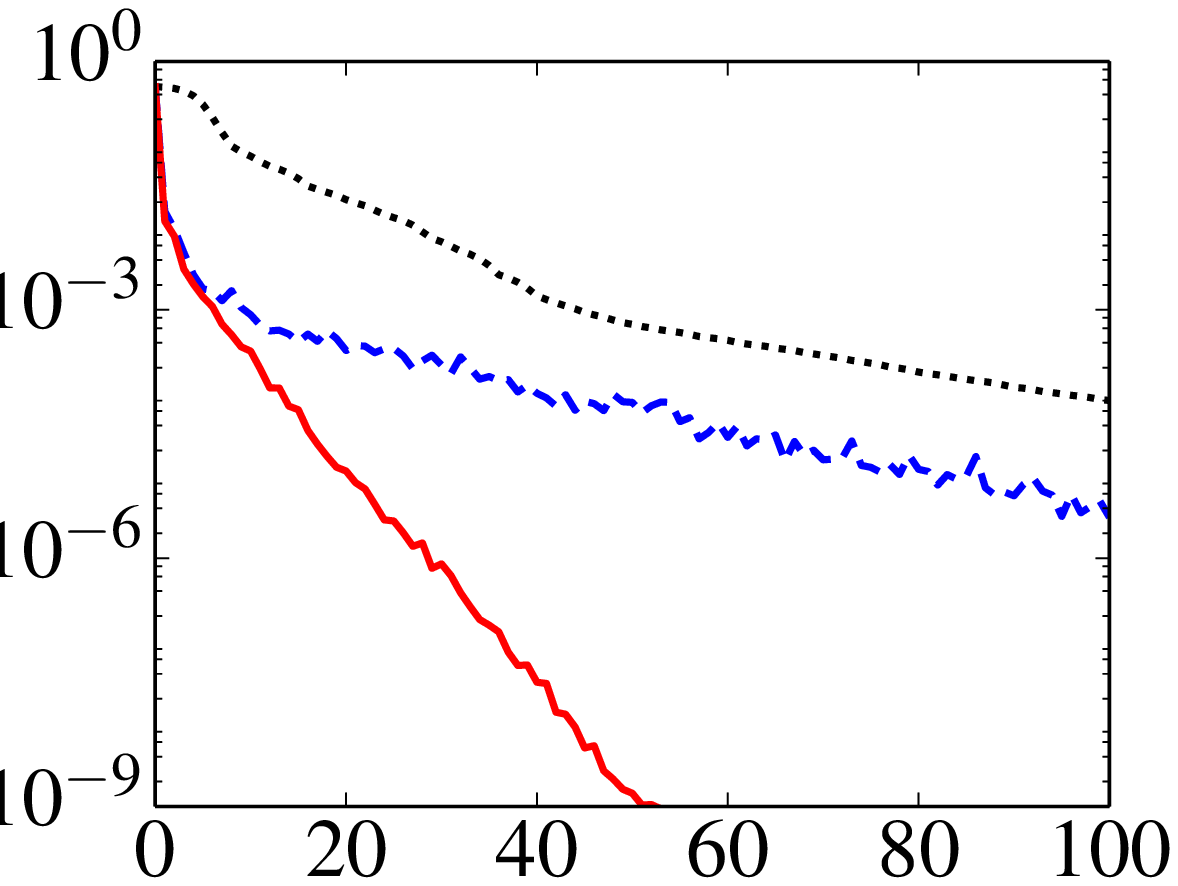} 
        & \includegraphics[width=0.28\textwidth]{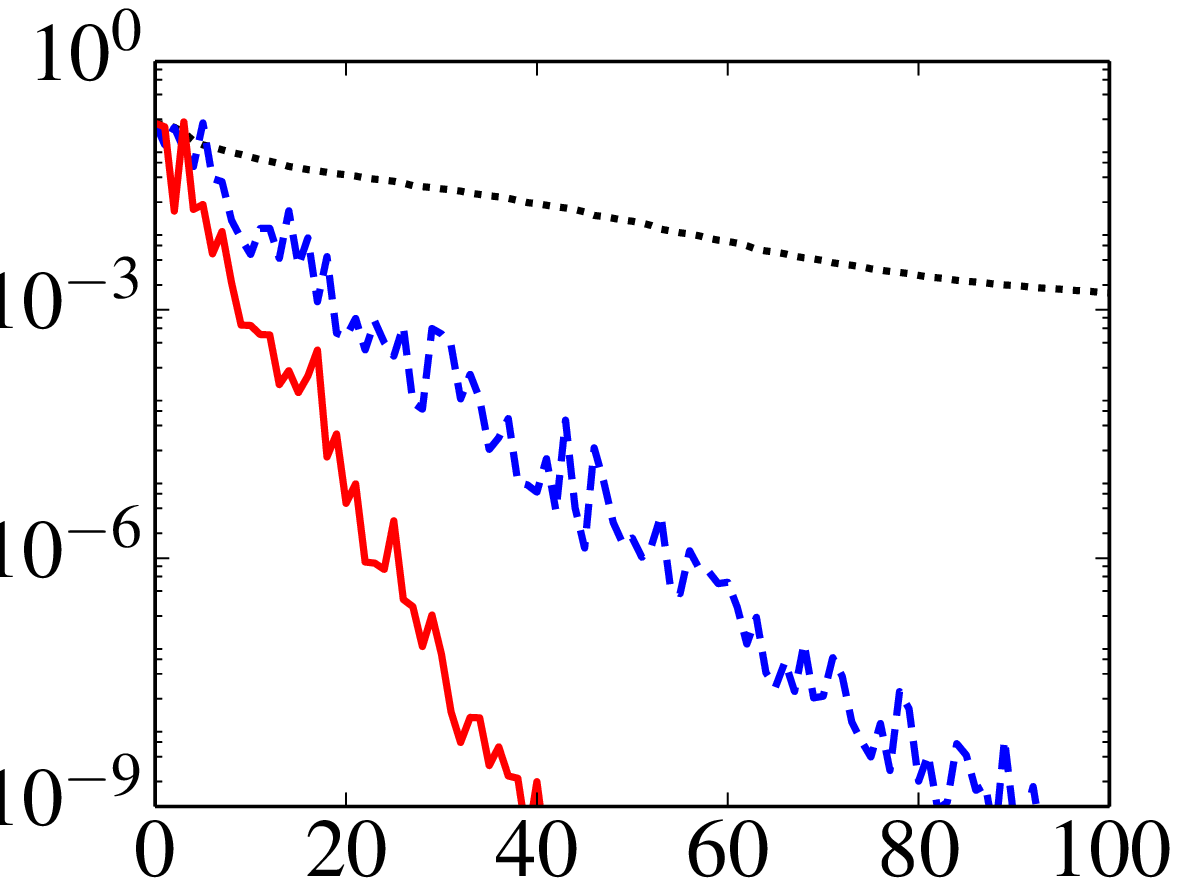} 
        & \includegraphics[width=0.28\textwidth]{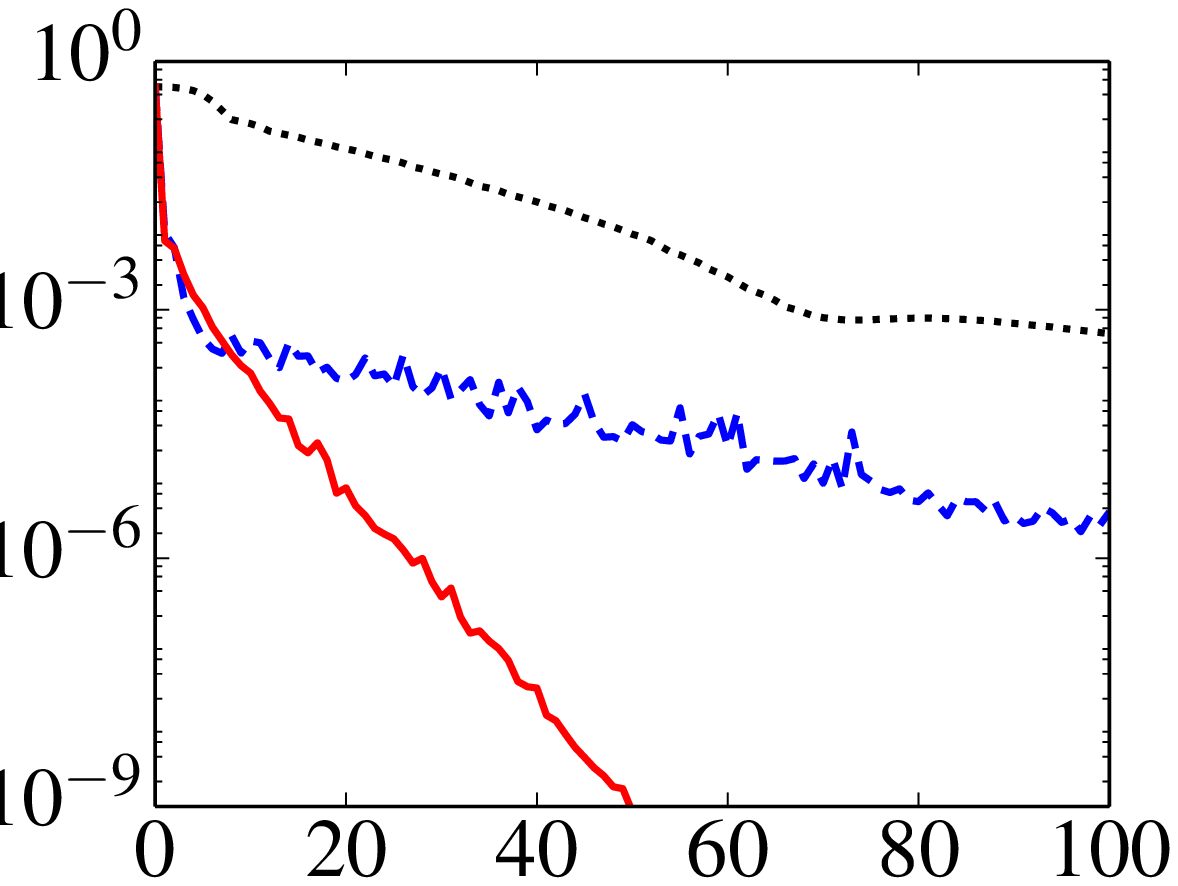}\\
    \raisebox{10ex}{$10^{-7}$} 
        &\quad \includegraphics[width=0.28\textwidth]{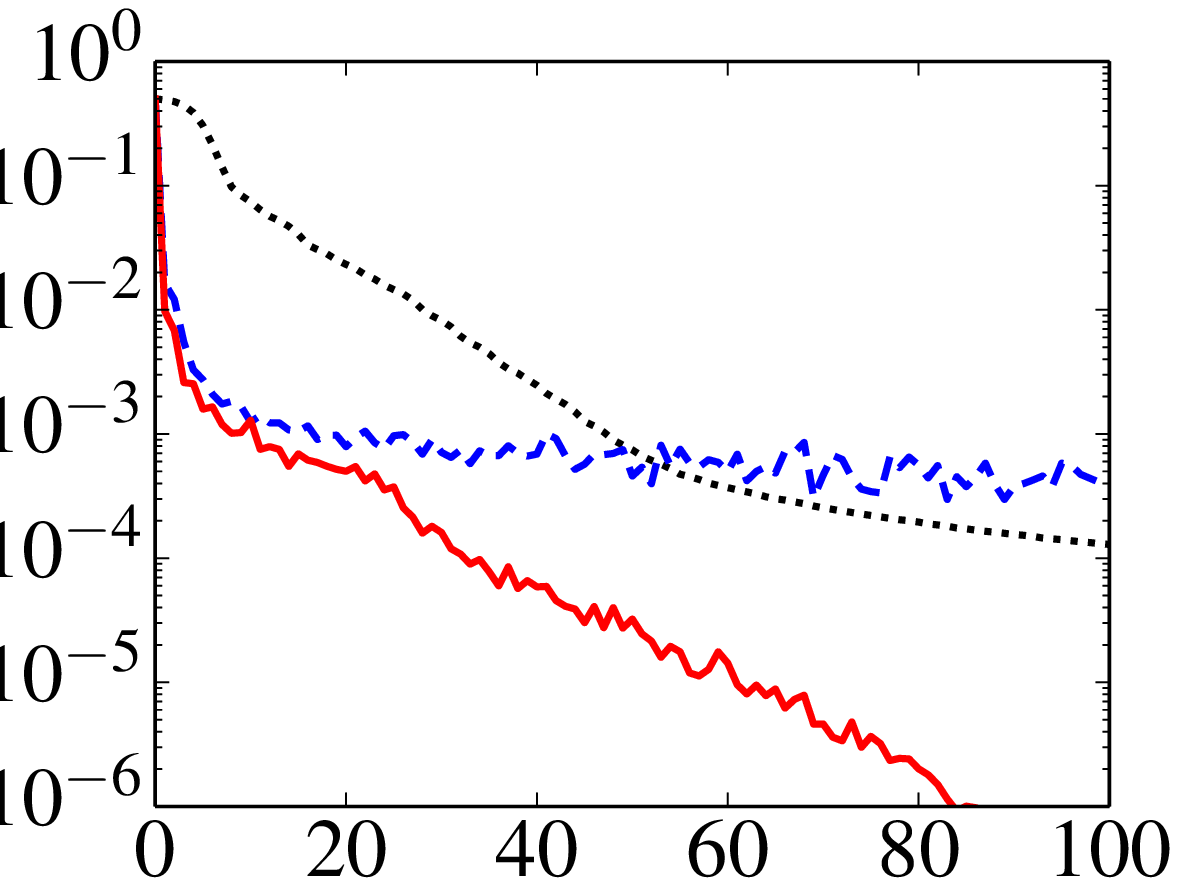} 
        & \includegraphics[width=0.28\textwidth]{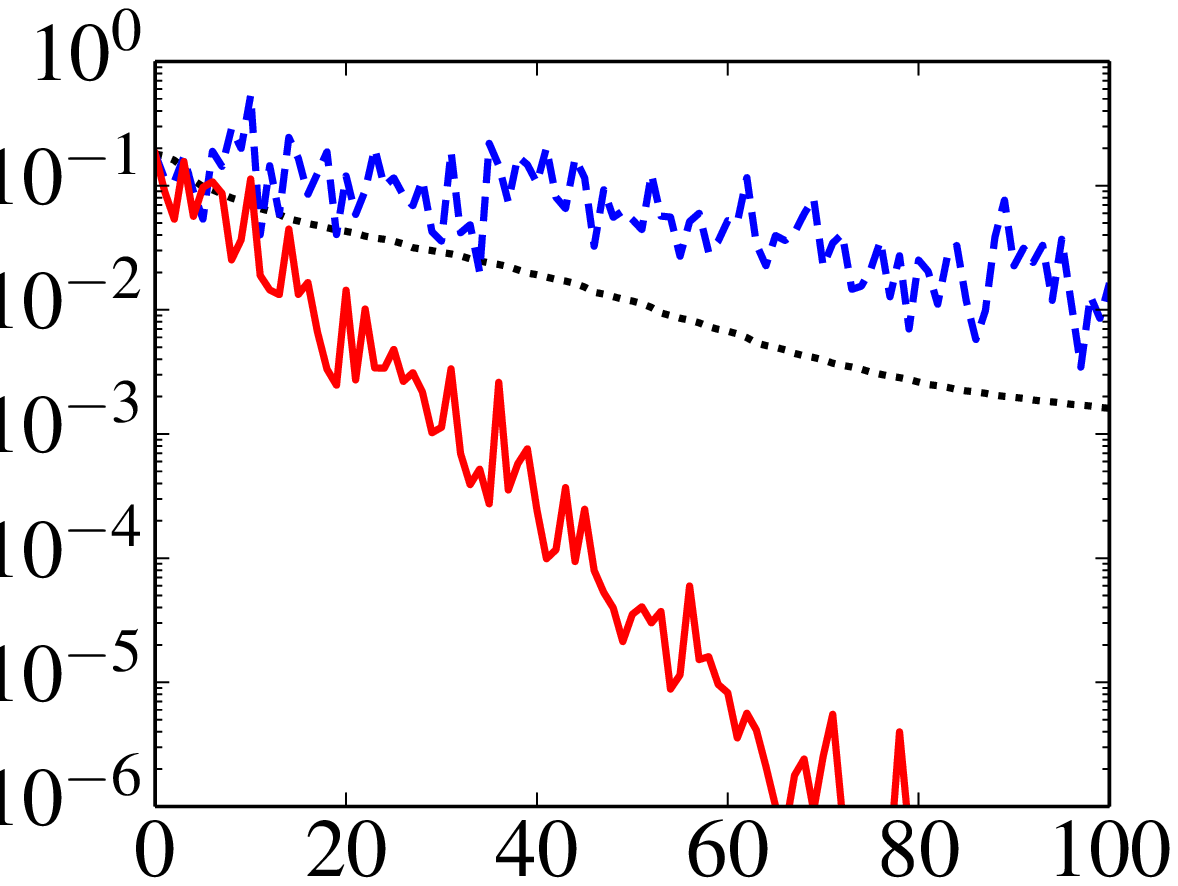} 
        & \includegraphics[width=0.28\textwidth]{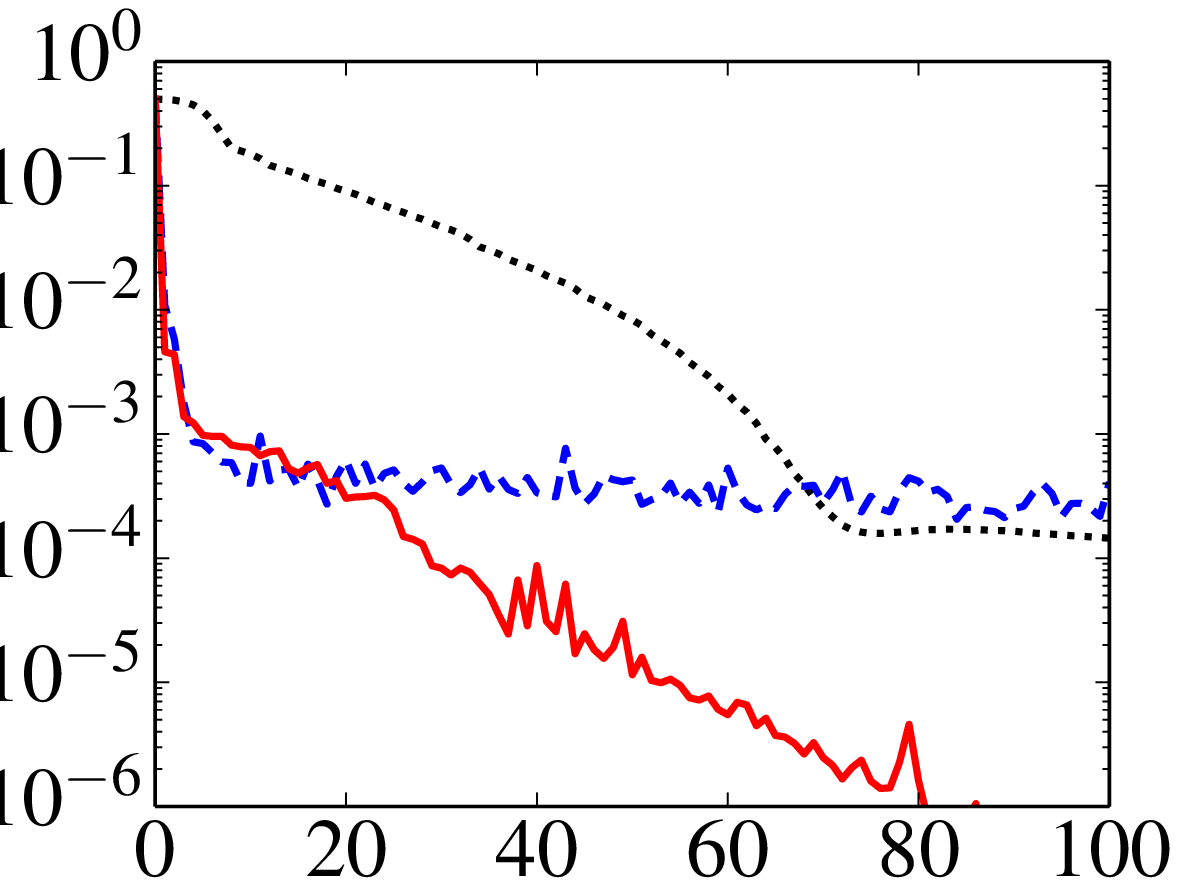}\\
    \raisebox{10ex}{$10^{-8}$} 
        &\quad \includegraphics[width=0.28\textwidth]{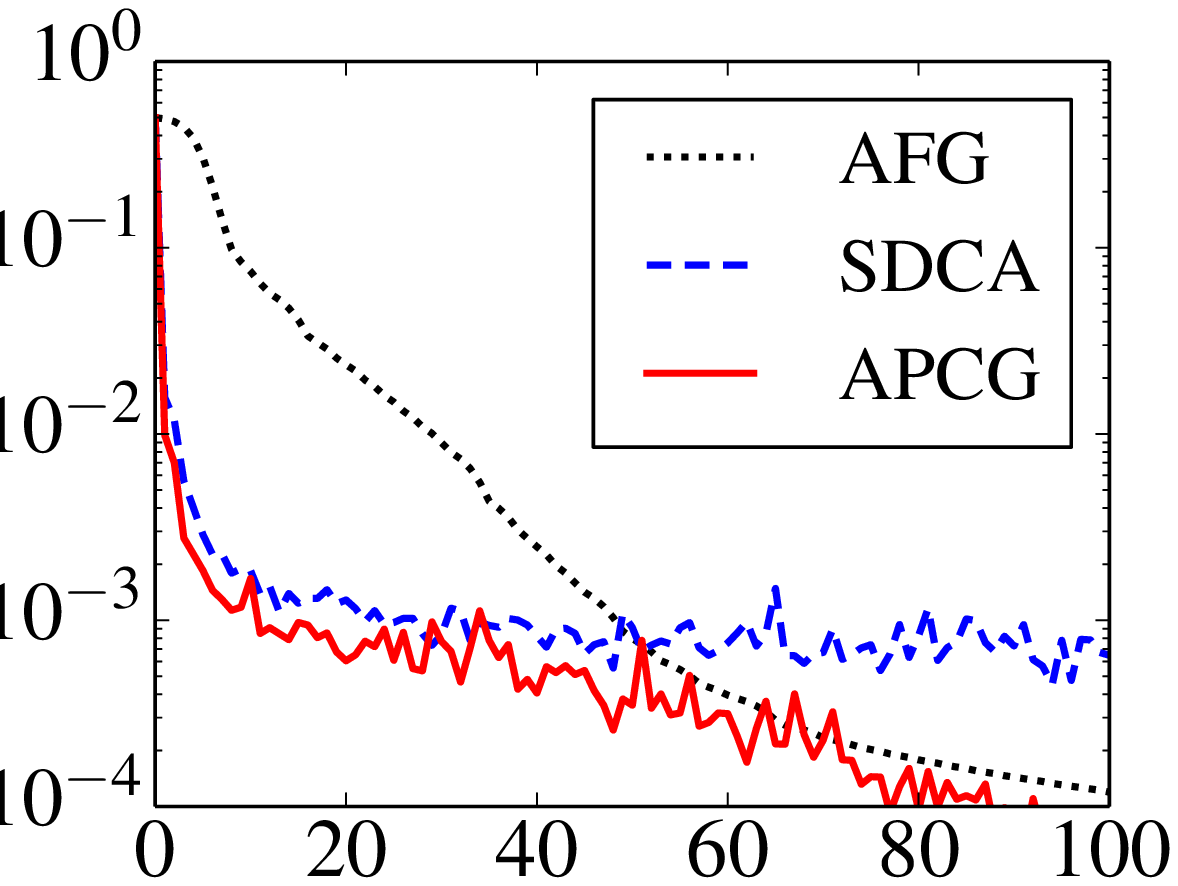} 
        & \includegraphics[width=0.28\textwidth]{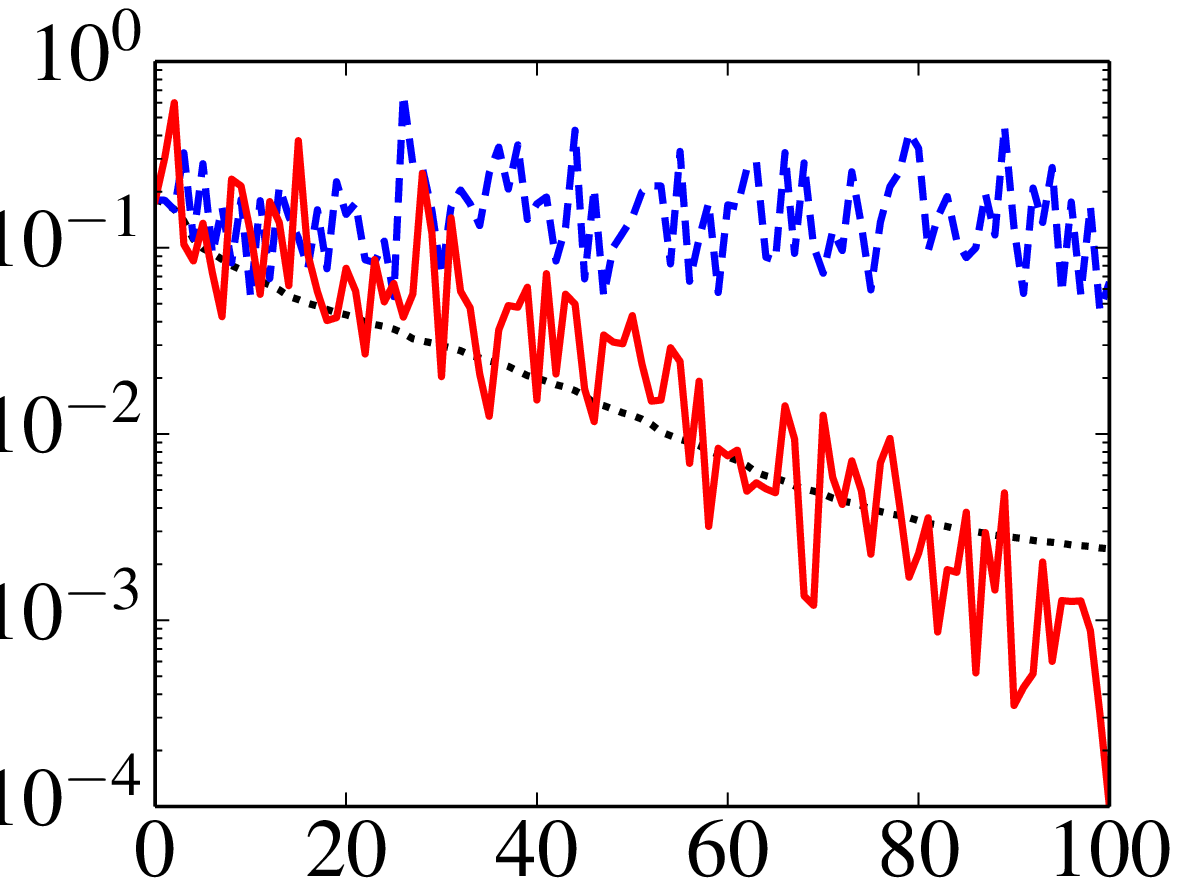} 
        & \includegraphics[width=0.28\textwidth]{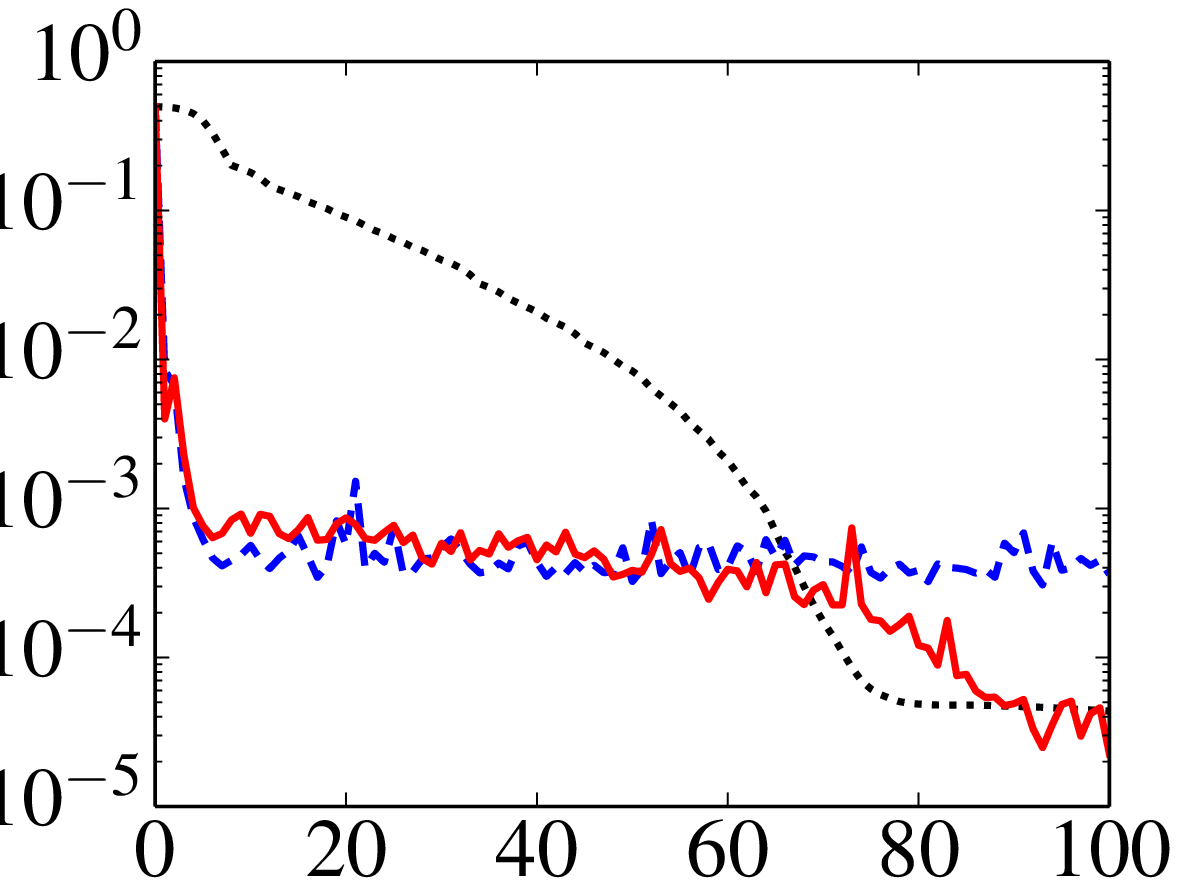}\\
\end{tabular}
\vspace{2ex}
\caption{Comparing the APCG method with SDCA and the accelerated full 
gradient method (AFG).
In each plot, the vertical axis is the primal objective value gap,
i.e., $P(w^{(k)})-P^\star$, and the horizontal axis is the number of passes
through the entire dataset.
The three columns correspond to the three data sets, and each row corresponds
to a particular value of~$\lambda$.
}
\label{fig:primal}
\end{figure}

\begin{figure}[p]
    \begin{tabular}[h]{@{}c|ccc@{}}
    $\lambda$ & RCV1 & covertype & News20 \\
    \hline \\
    \raisebox{10ex}{$10^{-5}$} 
        &\quad \includegraphics[width=0.28\textwidth]{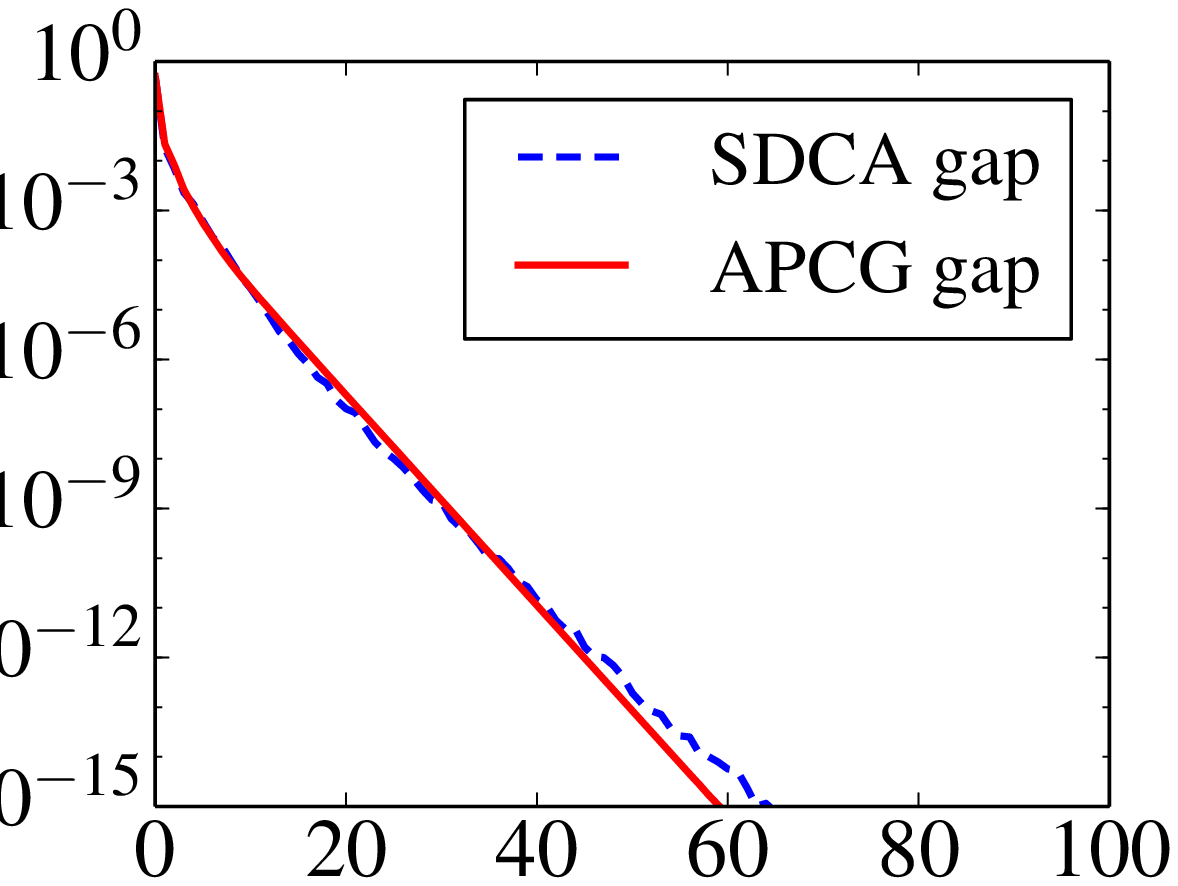} 
        & \includegraphics[width=0.28\textwidth]{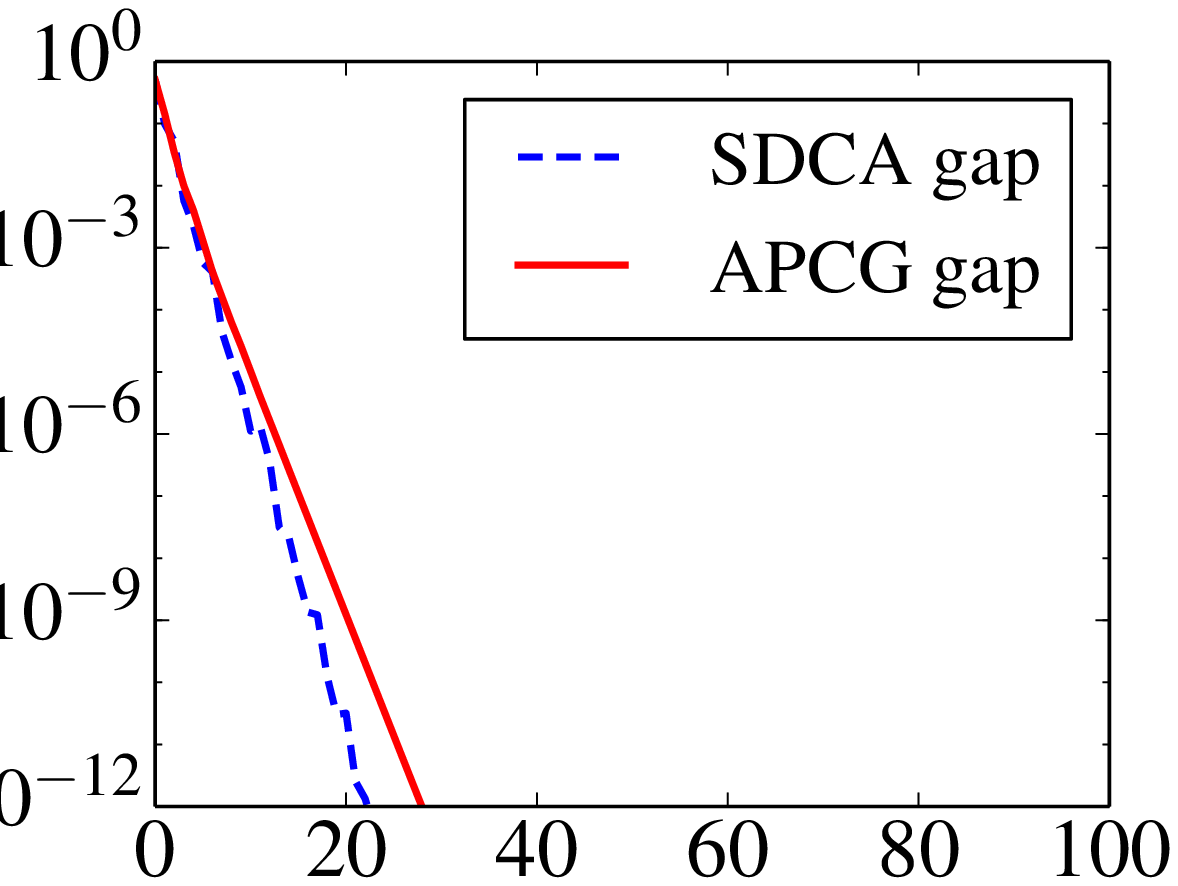} 
        & \includegraphics[width=0.28\textwidth]{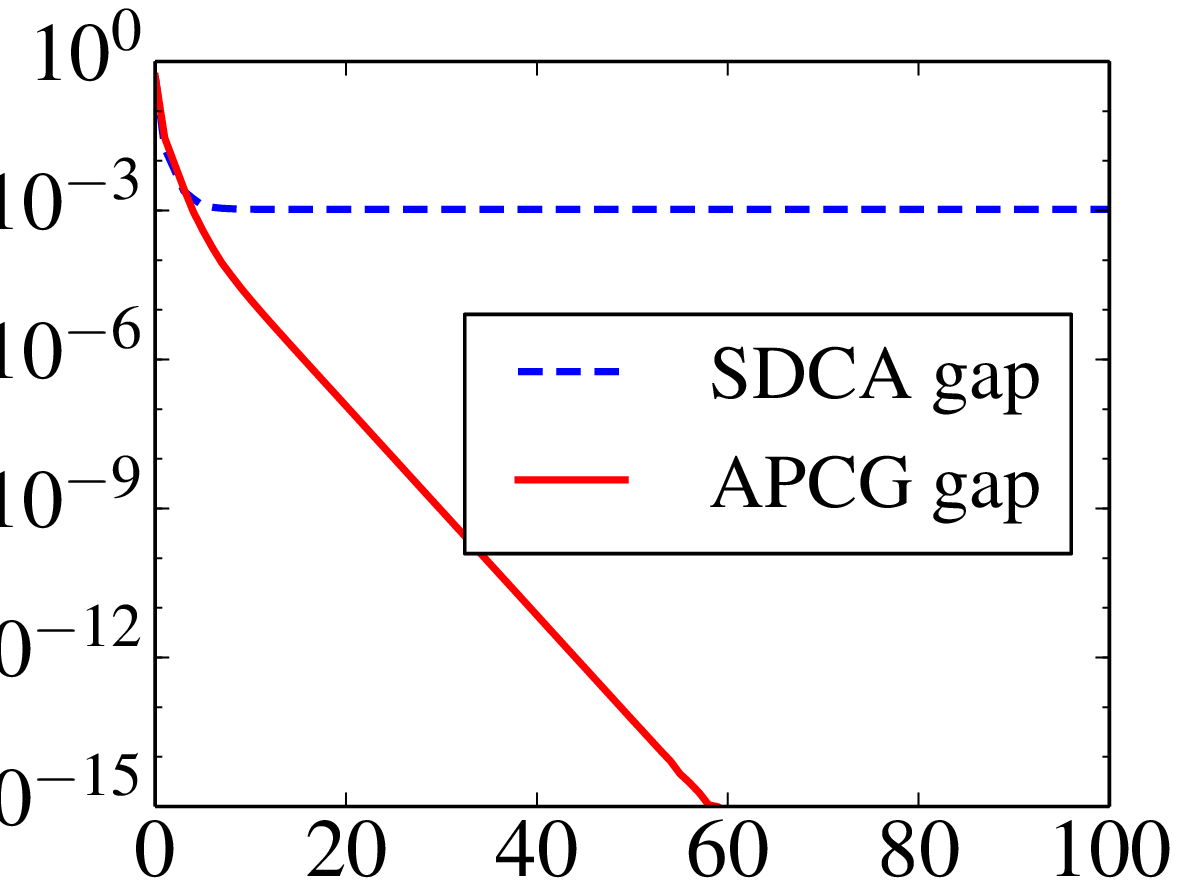}\\
    \raisebox{10ex}{$10^{-6}$} 
        &\quad \includegraphics[width=0.28\textwidth]{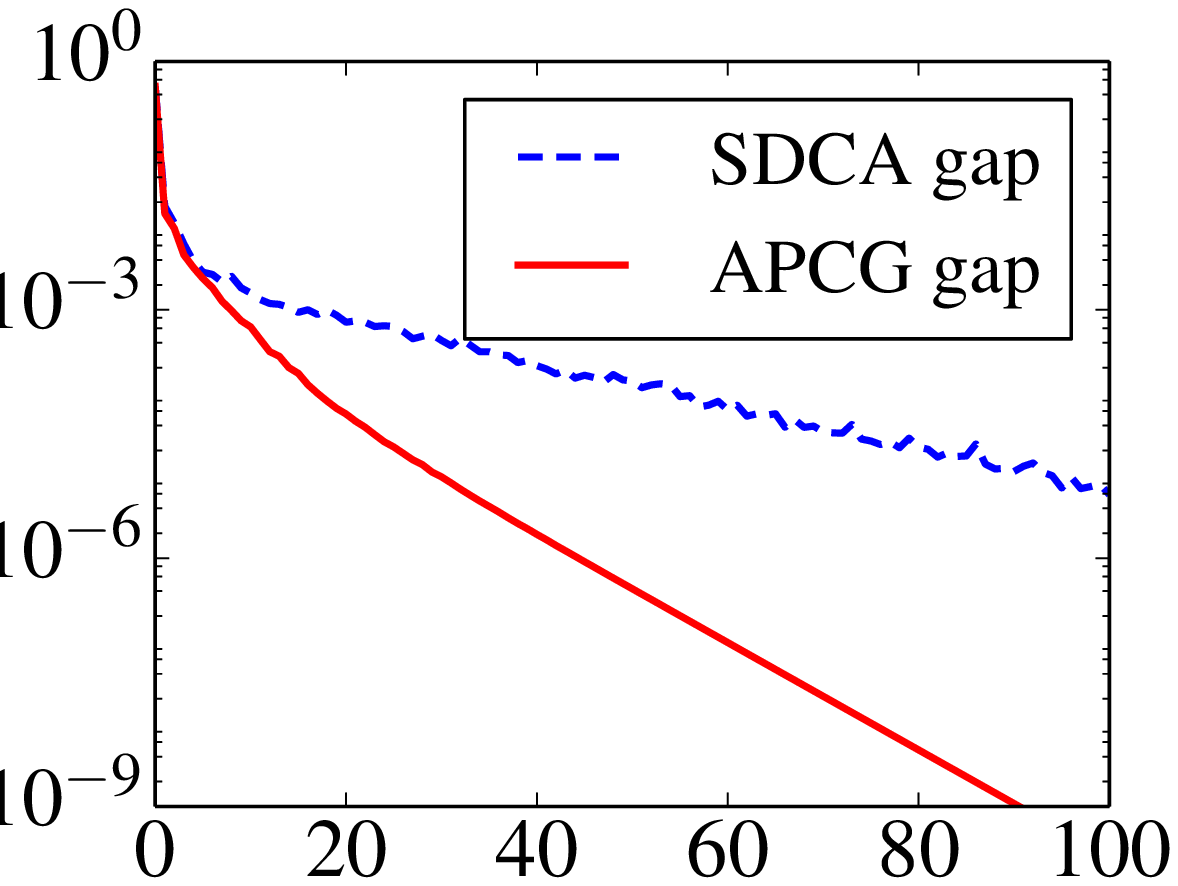} 
        & \includegraphics[width=0.28\textwidth]{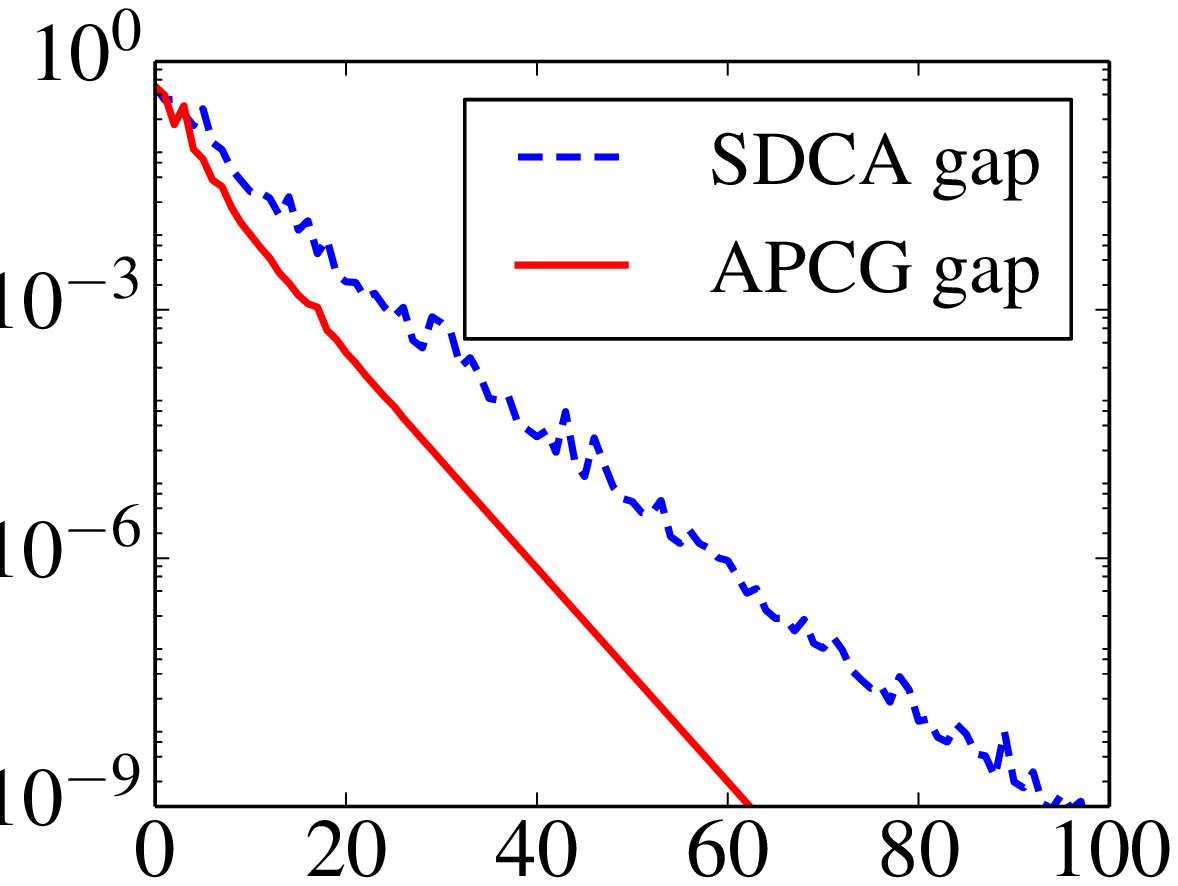} 
        & \includegraphics[width=0.28\textwidth]{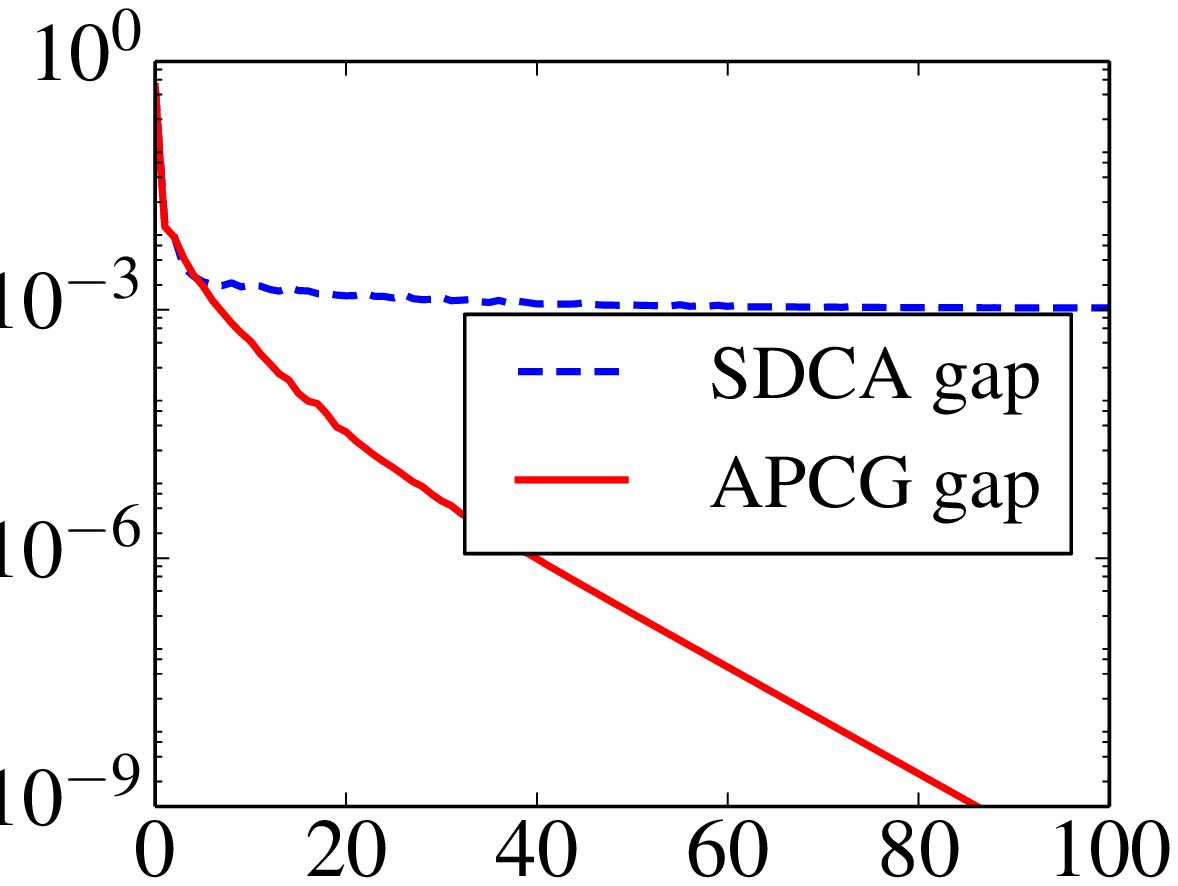}\\
    \raisebox{10ex}{$10^{-7}$} 
        &\quad \includegraphics[width=0.28\textwidth]{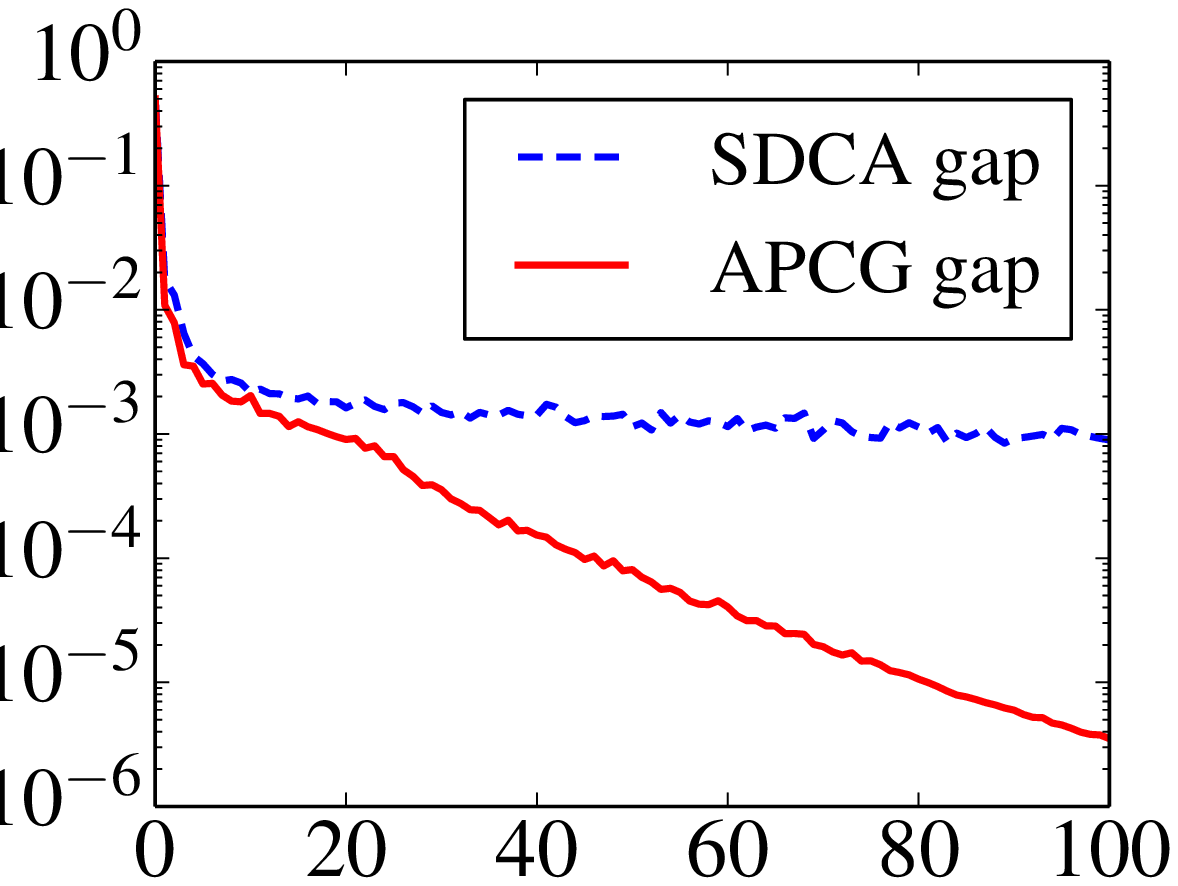} 
        & \includegraphics[width=0.28\textwidth]{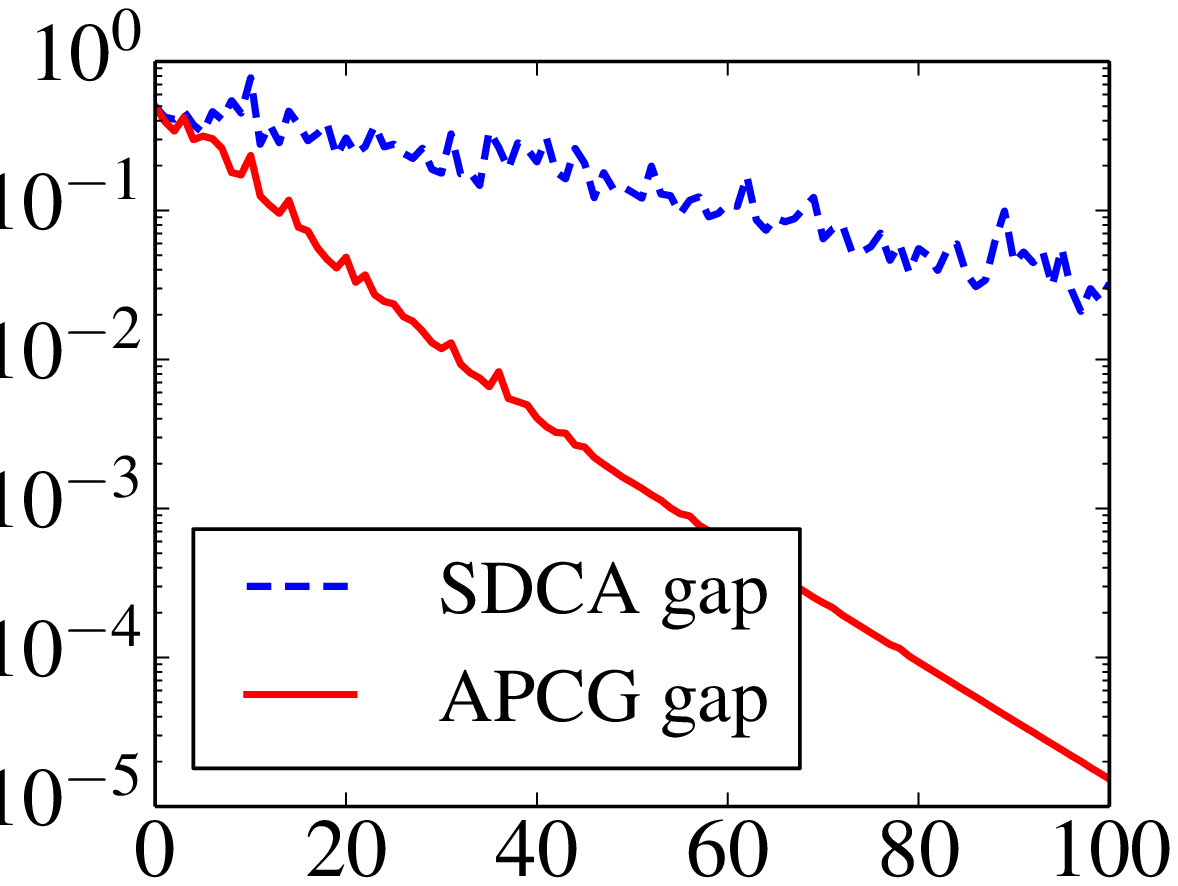} 
        & \includegraphics[width=0.28\textwidth]{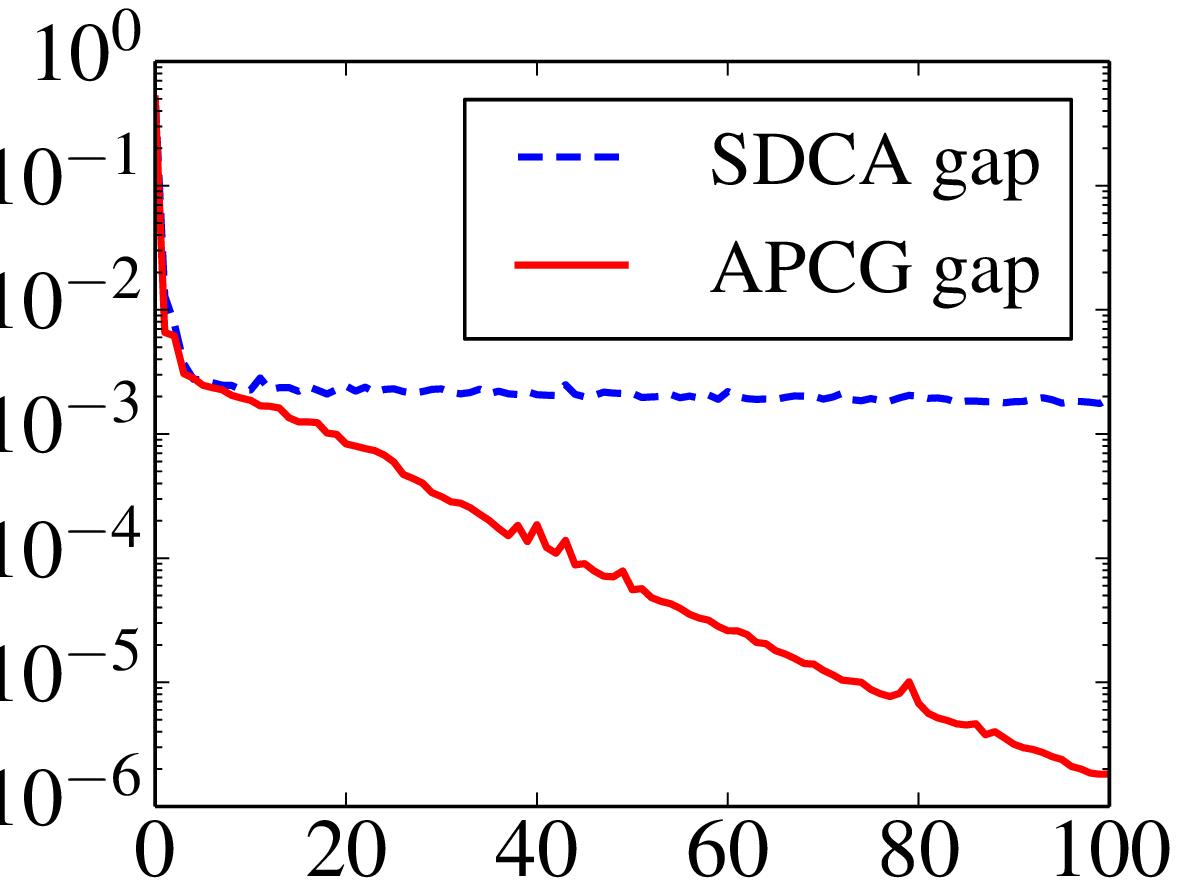}\\
    \raisebox{10ex}{$10^{-8}$} 
        &\quad \includegraphics[width=0.28\textwidth]{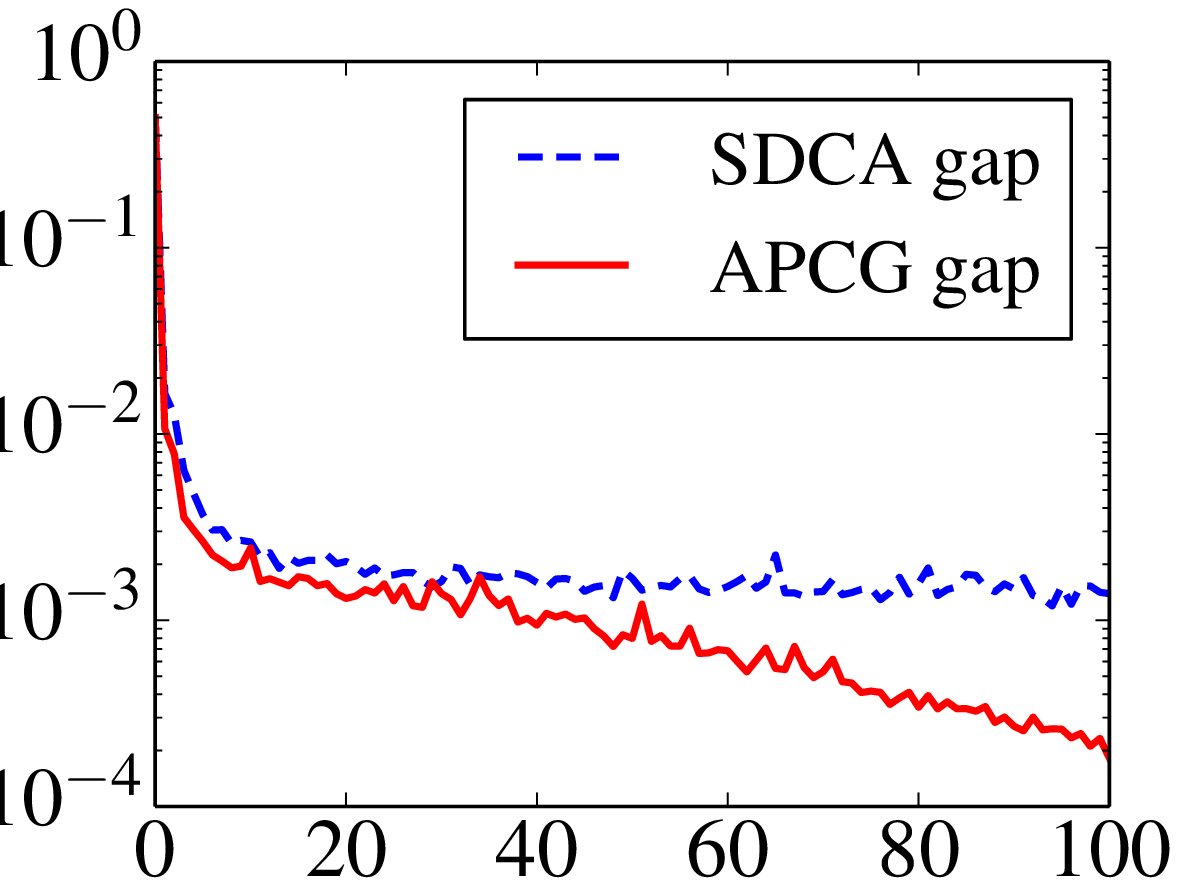} 
        & \includegraphics[width=0.28\textwidth]{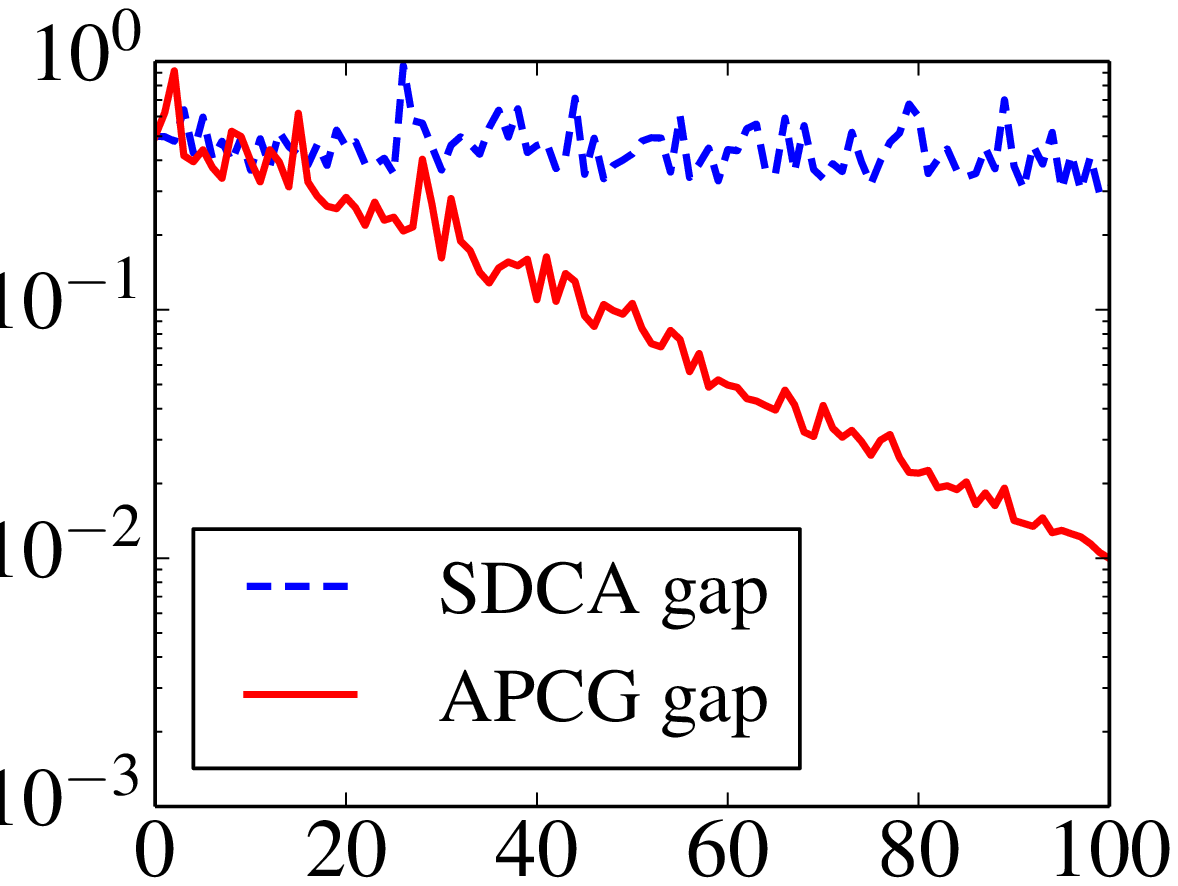} 
        & \includegraphics[width=0.28\textwidth]{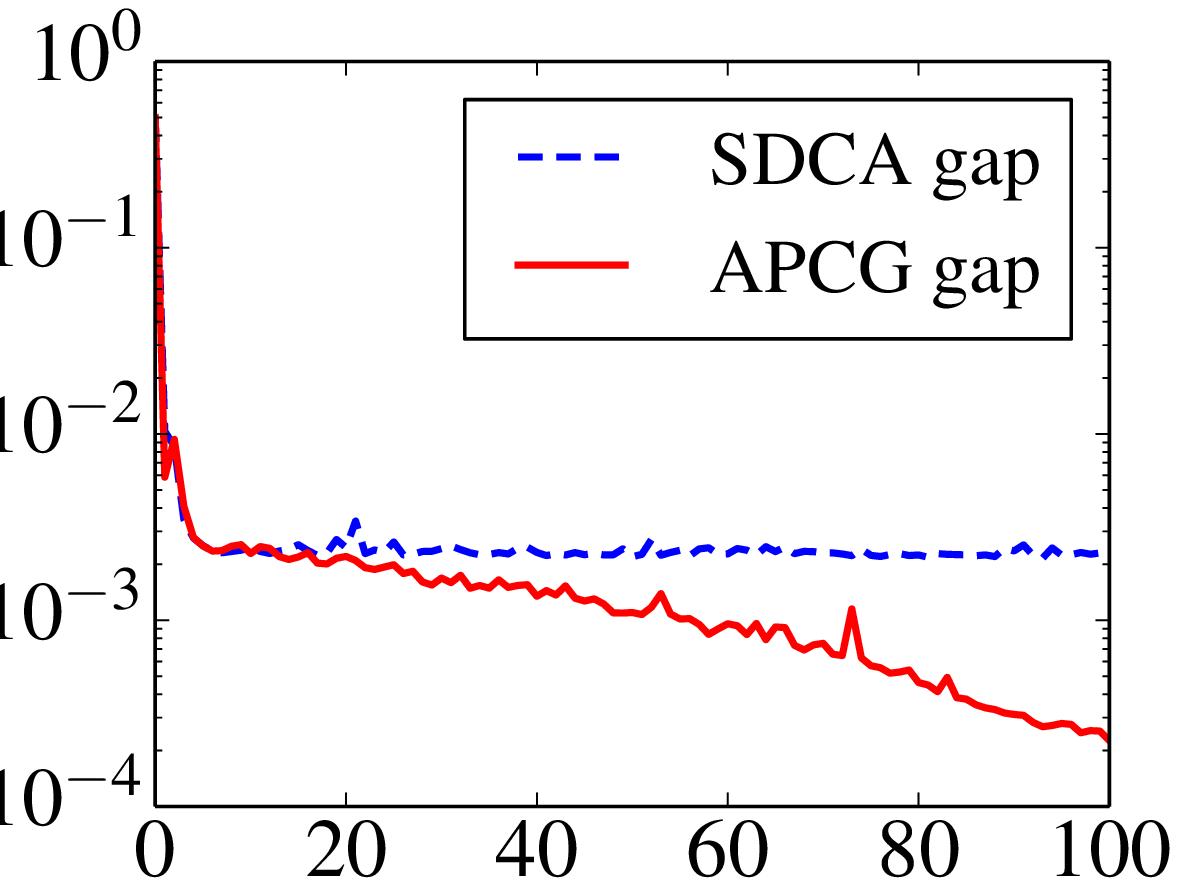}\\
\end{tabular}
\vspace{2ex}
\caption{Comparing the primal-dual objective gap produced by
    APCG and SDCA.
In each plot, the vertical axis is the primal-dual objective value gap,
i.e., $P(w^{(k)})-D(x^{(k)})$, and the horizontal axis is the number of passes
through the entire dataset.
The three columns correspond to the three data sets, and each row corresponds
to a particular value of~$\lambda$.
}
\label{fig:gap}
\end{figure}

%%%%%%%%%%%%%%%%%%%%%%%%%%%%%%%%%%%%%%%%%%%%%%%%%%%%%
% we can add concluding remarks later, or skip it
\iffalse
\section{Concluding remarks}
\subsection{Parallel and distributed implementation}
cite ESO for parallel computation, 
and possible extension to asynchronous coordinate descent methods,
cite Liu and Wright.
\cite{Shotgun11icml}
\cite{RichtarikTakac12bigdata, RichtarikTakac13distributed}
\cite{NecoaraClipici13}
\cite{LiuWrightRe14}
Accelerated Randomized Kaczmarz
\cite{LeeSidford13,LiuWright13}
\fi

\clearpage

\bibliographystyle{plain}
\bibliography{apcg}

%\newpage
%\appendix

\end{document}